\numberwithin{equation}{section}
\apptocmd{\thebibliography}{}{}{}
\theoremstyle{plain}
\newtheorem{theorem}{Theorem}[section]
\newtheorem{lemma}[theorem]{Lemma}
\newtheorem{corollary}[theorem]{Corollary}
\newtheorem{proposition}[theorem]{Proposition}
\newtheorem{reminder*}{[theorem]Reminder}
\newtheorem{details*}[theorem]{Details}
\newtheorem{comm*}{Comment}
\newtheorem{example}[theorem]{Example}
\newtheorem{definition}[theorem]{Definition} 
\newtheorem{definition*}{[theorem]Definition}
\newtheorem{notation*}{Notation}
\newtheorem{remark}[theorem]{Remark}
\title{The detailed balance property and chemical systems out of equilibrium}
  \author{E. Franco, J. J. L. Velázquez}
\begin{document}

\maketitle

\begin{abstract}
The detailed balance property is a fundamental property that must be satisfied in all the macroscopic systems with a well defined temperature at each point.
On the other hand, many biochemical networks work in non-equilibrium conditions and they can be effectively modelled using sets of equations in which the detailed balance condition fails. 
In this paper we study a class of "out of equilibrium" chemical networks that can be obtained freezing the concentration of some substances in chemical networks for which the detailed balance property holds. 
In particular, we prove that any chemical system with bidirectional chemical reactions can be extended to a system having additional substances and for which the detailed balance property holds. 
\end{abstract}

\textbf{Keywords:} chemical reaction networks, detailed balance, kinetic systems with fluxes, non-equilibrium systems. 

\tableofcontents
\section{Introduction}
A property that must be satisfied at the fundamental level by any chemical system with well defined thermodynamic quantities at each macroscopic point, as the temperature and the concentration of substances, is that the chemical rates should satisfy the so-called detailed balance property. 
This property, that was introduced for the analysis of collisions in gases by Boltzmann (see \cite{boltzmann1964lectures}), states that at equilibrium each reaction is balanced by its reverse reaction. 
However, many biochemical networks modelling biological processes contain irreversible reactions and therefore they do not satisfy the detailed balance condition. 
One way of justifying these models is to assume that they describe an open system, which is in contact with one or more reservoirs of substances. As a consequence these biochemical networks operate in "out of equilibrium" conditions. 

In this paper we will be concerned only with chemical systems in which the temperature is constant, therefore the systems considered in this paper are assumed to  be in contact with a reservoir at constant temperature. The chemical systems that we consider exchange heat with the environment.  
Moreover, the chemical networks that we study in this paper are endowed with mass action kinetics, hence they are chemical networks with reaction rates given by the mass action law. We stress that we make this choice partly because the mass action assumption simplifies the analysis of the kinetic systems and partly because, many non-mass action kinetics can be obtained as limits of mass action kinetics. This is the case for instance for the Michaelis-Menten kinetics or for the Hill law (see for instance \cite{goldbeter1981amplified,segel1989quasi}). 

The kinetic systems that we consider in this paper are "out of equilibrium" due to the exchange of matter with the environment. 
In particular, we assume that these kinetic systems are in contact with substances whose concentration is out of equilibrium. 
Under some assumptions that will be prescribed in this paper, these systems can be described by effective models in which the detailed balance property fails.
From this point of view, the failure of detailed balance provides a measure for the lack of equilibrium of the system. 

One of the issues that we address in this paper is how to derive kinetic systems that are out of equilibrium taking as a starting point kinetic systems that satisfy the detailed balance condition. 
In other words, we assume that the detailed balance property should be satisfied at the fundamental level by any chemical system and the lack of detailed balance takes place only as an effective property that arises from the fact that the concentrations of some substances in the network are kept at "non equilibrium" values in an active manner. 
The reason why these concentrations are kept out of equilibrium could be, for instance, an exchange of chemicals between the systems under consideration (for instance a subset of chemicals inside a cell) and the environment which is assumed to be out of equilibrium. Or alternatively the concentration of a substance could be kept at non-equilibrium concentrations by means of an active mechanism, for instance the production of ATP in mitochondria. 

Let us mention that kinetic systems that exchange chemicals with the environment can be found in the mathematical and physical literature. We can refer for instance to \cite{craciun2005multiple,craciun2006multiple,craciun2010multiple,schnakenberg1976network}. 
One of the questions that we examine in this paper is the following. 
Suppose that we have a kinetic system that satisfies the detailed balance property. Suppose that some concentrations are frozen at constant values.
Then the non-frozen concentrations solve the equations associated with a kinetic system that we will denote as \textit{reduced kinetic system}. 
As a matter of fact, it turns out that one of the ways in which it is possible to obtain a reduced kinetic system satisfying the detailed balance condition, is assuming that the frozen concentrations are at equilibrium values. 
Alternative ways to obtain a reduced kinetic system that satisfies the detailed balance property are also discussed in Section \ref{sec:detailed balance reduced}. 
In particular, we prove that the reduced system satisfies the detailed balance property in a robust way (i.e. the property is stable under small changes of the reaction rates and of the values of frozen concentrations) if and only if some topological condition on the reactions belonging to the cycles are satisfied both by the reduced and the original system. 
We remark that the fact that some topological conditions must be imposed on the cycles of the kinetic systems in order to obtain the detailed balance property in a robust manner is not surprising, as it is well known that the detailed balance property imposes conditions on the rates of the reactions that are part of a cycle (see for instance the \textit{circuit condition} or \textit{Wegscheider criterion} in \cite{wegscheider1902simultane}).

We stress that freezing the concentrations of certain chemicals at constant values is not the only way in which we can obtain non-equilibrium effects. We can imagine an exchange of matter taking place at the same time scale as the evolution inside the kinetic system, for instance we might have that the concentration  $n_F $ of a certain substance $F$ is given by 
\begin{equation} \label{eq:intro const fluxes}
\frac{d}{ dt } n_F =J_F + \alpha (n_{ext}- n_F ).
\end{equation}
Here $\alpha >0 $ and $J_F$ represent the changes in the concentration of $n_F $ due to the dynamics taking place in the kinetic system under consideration. Instead, $\alpha n_{ext}$ represents the influx of substance coming from the environment, while the term $-\alpha  n_F$ accounts for the outflux of substance from the kinetic system to the environment. 
In that case the concentration of the substance $F$ is not frozen, but it changes in time. 
Examples of models in which the concentration of a substance or of many substances change in time due to outfluxes and influxes of matter are for instance models in which there is an active or passive transfer of chemicals through a membrane.

As indicated above there is a plethora of models describing biochemical systems by means of ODEs for which the detailed balance condition is not satisfied. We can refer for instance to some models in \cite{alon2019introduction} or the models of adaptation in \cite{barkai1997robustness,ferrell2016perfect}, the model of the Calvin cycle (see for instance \cite{gurbuz2021analysis,rendall2014dynamical}), the kinetic proofreading model (see \cite{franco2024stochastic,hopfield1974kinetic}), the model of ABC transporters \cite{flatt2023abc}, the models of adaptation in \cite{ferrell2016perfect} and some of the  models in \cite{milo2015cell} (for instance the models of chemotaxis) and in \cite{phillips2012physical}. 
Since we stated above that at the fundamental level every biochemical model should satisfy the detailed balance condition, it is natural to ask under which conditions a kinetic system can be obtained by means of the reduction of a kinetic system for which the detailed balance condition holds. 

In this paper we will be mostly concerned with the study of systems in which all the reactions are bidirectional, i.e. they have the form 
\[ 
(1) +(2) \leftrightarrows (3)+(4). 
\]
One directional reactions have, instead the form 
\[
(1) +(2) \rightarrow (3)+(4). 
\]
The models in which the detailed balance property holds are necessarily bidirectional.
Conditions in one-directional networks that can be obtained as a limit of networks for which the detailed balance property holds have been found in \cite{gorban2011extended}. 

We prove that any bidirectional biochemical system can be obtained as the reduction of a larger kinetic system for which the detailed balance property holds, we refer to this system as the \textit{completed kinetic system}. 
More precisely, we prove that every open kinetic system can be obtained as the reduction of a \textit{closed} kinetic system. 
In this paper we say that a kinetic system is closed if it does not exchange substances with the environment. 
Since, as explained before, the lack of detailed balance emerges as an effective property of kinetic systems in which there are influxes and outfluxes of matter, closed kinetic systems are kinetic system that satisfy the detailed balance property and that do not exchange substances with the environment. 
The type of completion studied here has some similarities with the completion of a model of the Calvin cycle explained in \cite{rendall2017calvin}. In that paper, indeed, the relation between a model of Calvin cycle where the concentration of ATP molecules is assumed to change in time due to the reactions taking place in the network and a model in which the concentration of ATP molecules is chosen at constant values is studied.
The concentration of ATP is assumed to be at constant values because the concentration of ATP is larger than the concentration of the other substances in the Calvin cycle. 
Here we focus on understanding whether or not the property of detailed balance of the completed system is inherited by the reduced system.

On the other hand, for some specific kinetic systems there might be constraints in the way in which reactions can be modified in order to obtain a closed completion, for instance because some chemical reactions are known in full detail. 
We prove that, when this is the case, it is not always possible to find a closed completion for these kinetic systems. 
Indeed, as we will explain later in detail, the fact that the kinetic system can be completed or not depends on the position of the reactions that cannot be modified in the network, in particular the difficulties arise when these reactions belong to cycles. 

One of the main reasons to study the relation between the systems satisfying the detailed balance property and more general chemical networks is that this approach allows to measure the amount of "non-equilibrium" of these chemical networks by means of the fluxes of matter and energy required to make the system functioning. 
One of the issues in which we are interested is determining the degree of lack of equilibrium required for a chemical network to be able to perform some biological function. 
One example of this type of function is the so-called adaptation property (i.e. the capability of a network to react to changes in the signal, instead of reacting to the absolute value of it. This property is exhibited by many biological sensory systems ranging from bacteria to multicellular organisms. 
In \cite{franco2025adaptation} we will prove that the adaptation property cannot take place for closed chemical systems in a robust manner. The relation between the adaptation property and the detailed balance property will be studied in detail in \cite{franco2025adaptation}. 

\bigskip 

\textbf{Plan of the paper and main results} 

\bigskip 

In Section \ref{sec:review} we will review some of the basic definitions and results on kinetic systems and on chemical networks. 
 In Section \ref{sec:chemical reaction networks} we will remember some of the basics concepts used in the theory of chemical networks, as for instance the notion of stoichiometric subspace, and of conservation laws.
In Section \ref{sec:kinetic system} we will review the definition of kinetic systems (i.e. chemical networks associated with a rate function).

In Section \ref{sec:kinetic system DB} we review some of the main results for kinetic systems that satisfy the detailed balance property.
The reason why we revise the properties of kinetic systems that satisfy the detailed balance condition is that, as mentioned above, this property is a fundamental property of kinetic systems describing reactions in systems with well defined thermodynamic quantities (in local thermal equilibrium). 
In Section \ref{sec:DB} and Section \ref{sec:energy} we recall the definition of detailed balance and we recall that this property allows to associate to every substance $i \in \Omega $ in the kinetic system an energy $E_i$.
In the chemical literature there is usually a well defined vector of energies $E \in \mathbb R^N $. On the other hand, the equilibrium solution $N=(N_k) $ of a chemical system for which the detailed balance property holds is of the form 
 $N=(N_k)_k=e^{- E_k \pm \sum_{j=1}^L \mu_j m_j(k) } $ where $m_j $ for $j =1, \dots L $ are the conserved quantities and $\mu_j$ are the so-called chemical potentials. It turns out that the vector of energies $E$ can be defined up to the addition of the quantity $\sum_{j=1}^L \mu_j m_j(k)$. Due to this we will talk in the following of a whole class of vectors of energies, which can be defined up to the addition of the quantity $\sum_{j=1}^L \mu_j m_j(k)$. In the cases in which we need to use different vectors of Gibbs free energies $E$ and $\tilde E$ for a given chemical network, we will need to take into account explicitly the role of the chemical potentials.

 Finally, we recall that it is possible to associate to kinetic systems satisfying the detailed balance property a non-increasing free energy. Among the class of concentrations in the same stoichiometric class, the energy is minimized at the steady states. This means that when the kinetic system is at the steady state, then it does not dissipate energy. 
Moreover, the non-increasing free energy serves as a Lyapunov functional that allows to study the long-time behaviour of kinetic systems with the detailed balance property.
Finally, we introduce the definition of closed kinetic systems. These are kinetic systems that satisfy the detailed balance property and do not exchange substances with the environment. 

In Section \ref{sec:reduction} we define the reduced kinetic systems and study some of their properties. Reduced kinetic systems are effective systems used in order to describe the dynamics of systems in which certain chemicals are kept at constant concentration due to exchanges of substances with the environment. As mentioned above these reduced kinetic systems do not necessarily satisfy the detailed balance property. 

We start by explaining how we define the reduction of the reactions of a chemical network. This is done in Section \ref{sec:reduction of chem netowrks}.
  In Section \ref{sec:reduced cycles} we study the relation between the cycles of the reduced chemical network and the cycles of the non-reduced one. Notice that we are interested in the cycles of the kinetic systems because the detailed balance property is satisfied by a kinetic system if and only if the so-called circuit condition is satisfied along the cycles (\cite{wegscheider1902simultane}). As a consequence, if a chemical system does not have cycles and also its reduction does not have cycles, then the associated kinetic systems will satisfy the detailed balance property for every choice of reaction rates. In Section \ref{sec:reduction of kinetic systems} we introduce the definition of reduced kinetic system, these are reduced chemical system endowed with reaction rates that depend on the concentration of the frozen substances and on the reaction rates of the non-reduced kinetic system. 

  In Section \ref{sec:DB in the reduction} we study under which conditions the reduction of a kinetic system  that satisfies the detailed balance property also  satisfies the detailed balance condition and under which conditions it does not. 
  The main results in Section \ref{sec:detailed balance reduced equilibrium} are summarized in the following informal theorem that will be stated later precisely (Proposition \ref{prop: DB when equilibirum conc}). 
  \begin{theorem}
  Consider a kinetic system that satisfies the detailed balance property. Assume that the concentrations of some substances are kept at constant values by influxes and outfluxes of chemicals. Moreover, assume that if the substances whose concentrations are frozen appear in the cycles, then their concentrations are at equilibrium values. Then the corresponding reduced kinetic system satisfies the detailed balance property.    
  \end{theorem}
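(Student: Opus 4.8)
The plan is to reduce the statement to a verification of the circuit condition and to transfer it from the original system to the reduced one cycle by cycle. Recall from Section~\ref{sec:kinetic system DB} that a reversible mass-action system satisfies the detailed balance property if and only if the circuit condition (Wegscheider criterion) holds along every cycle of its reaction graph. The reduction of a bidirectional network is again bidirectional: a reaction and its reverse are reduced simultaneously, and the reduced reaction is removed (as a trivial reaction) exactly when the reactant and product complexes coincide after deleting the frozen species, in which case both directions disappear. Hence the reduced kinetic system of Section~\ref{sec:reduction of kinetic systems} is reversible, the above criterion applies to it, and it suffices to fix a cycle $C$ of the reduced network and to show that the product of the forward reduced rate constants along $C$ equals the product of the backward ones. (If the reduced network has no cycles the statement is immediate, in accordance with the remark in Section~\ref{sec:reduced cycles}.)

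By the analysis of Section~\ref{sec:reduced cycles} I would lift $C$ to the original network: there is a finite sequence of reactions $r_1,\dots,r_m$ of the original network, with reactant and product vectors $\alpha^{(j)},\beta^{(j)}\in\R^{\Omega}$, whose reductions, suitably oriented, are the consecutive edges of $C$. Since $C$ is a closed walk in the reduced complex graph, the net stoichiometric change of $r_1,\dots,r_m$ vanishes on the non-frozen substances, i.e. $\sum_{j=1}^m(\beta^{(j)}_i-\alpha^{(j)}_i)=0$ for every $i$ that is not frozen, while on the frozen substances it equals the integer vector with components $c_F:=\sum_{j=1}^m(\beta^{(j)}_F-\alpha^{(j)}_F)$. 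The cycle $C$ is the reduction of a genuine cycle of the original network precisely when $c_F=0$ for all $F$; the cycles with $c_F\neq 0$ for some $F$ are the ``new'' cycles produced by the reduction, for instance the length-two cycles that appear when two distinct original reactions collapse onto the same reduced reaction.

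Now the computation. Let $N=(N_k)_{k}$ be a detailed-balance equilibrium of the original kinetic system, so that (Section~\ref{sec:DB}) $k_{r}^{+}\prod_{i}N_i^{\alpha^{(r)}_i}=k_{r}^{-}\prod_{i}N_i^{\beta^{(r)}_i}$ for every reaction $r$. Multiplying these identities over $j=1,\dots,m$ and using that $\sum_j(\beta^{(j)}-\alpha^{(j)})$ is supported on the frozen species, with components $c_F$ there, we obtain $\prod_{j}k_{r_j}^{+}=\bigl(\prod_{j}k_{r_j}^{-}\bigr)\prod_F N_F^{\,c_F}$. By the definition of the reduced rate constants, $\tilde k_{r_j}^{+}=k_{r_j}^{+}\prod_F n_F^{\alpha^{(j)}_F}$ and $\tilde k_{r_j}^{-}=k_{r_j}^{-}\prod_F n_F^{\beta^{(j)}_F}$, whence
\[
\frac{\prod_{j=1}^{m}\tilde k_{r_j}^{+}}{\prod_{j=1}^{m}\tilde k_{r_j}^{-}}
=\Bigl(\prod_{j=1}^{m}\frac{k_{r_j}^{+}}{k_{r_j}^{-}}\Bigr)\,\prod_F n_F^{\,\sum_{j}(\alpha^{(j)}_F-\beta^{(j)}_F)}
=\prod_F N_F^{\,c_F}\,\prod_F n_F^{\,-c_F}
=\prod_F\Bigl(\frac{N_F}{n_F}\Bigr)^{c_F}.
\]
The circuit condition for $C$ is exactly the assertion that this ratio equals $1$. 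If $c_F=0$ for every $F$ this holds trivially. Otherwise each frozen substance $F$ with $c_F\neq 0$ occurs in a reaction belonging to $C$, hence ``appears in the cycles'' in the sense of the hypothesis, so its frozen value is the equilibrium value $n_F=N_F$ and the ratio is again $1$. As $C$ was an arbitrary cycle of the reduced network, the circuit condition holds throughout, and therefore the reduced kinetic system satisfies the detailed balance property.

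I expect the main difficulty to lie in the lifting step and the associated bookkeeping: making precise, via Section~\ref{sec:reduced cycles}, that every cycle of the reduced network --- including the parallel-edge cycles created when several original reactions are identified under reduction --- arises from a well-defined sequence of original reactions; that it is enough to test the circuit condition on these cycles; and that the frozen defect $(c_F)_F$ is precisely the quantity controlled by the hypothesis. Care is also needed with the convention adopted for parallel reduced reactions (whether they are kept as separate edges or their rate constants are summed), but in either case the detailed balance of the reduced system follows from the circuit conditions verified above for the un-merged reactions. Once this correspondence is set up, the remainder is the short multiplicative computation displayed above.
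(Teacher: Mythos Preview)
Your approach via the Wegscheider circuit condition and cycle lifting is the same mechanism as the paper's, and your displayed computation is correct. The point that needs more than your closing remark is the passage to the paper's convention \eqref{reduced rates}, where the reduced rate $K_{\overline R}[n_U]=\sum_{R\in\pi_V^{-1}(\overline R)}\hat K_R$ is a \emph{sum}; your quantities $\tilde k_{r_j}^\pm$ are the individual summands $\hat K_{\pm r_j}$, not the summed rates that enter the circuit condition for the reduced system. The implication ``un-merged circuit conditions $\Rightarrow$ merged detailed balance'' is valid, but it requires checking the parallel-edge conditions for those $\overline R$ that lie in a merged cycle, and only there does the paper's hypothesis (formalised via $\mathcal D(\mathcal C_V)$) guarantee $n_F=N_F$ for the relevant frozen species; for $\overline R$ outside every merged cycle the un-merged system may well fail detailed balance, so your implication has to be applied edge by edge on the merged cycle rather than globally. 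The paper sidesteps this by computing the merged reaction energy directly: under the hypothesis, $\hat K_{-R}/\hat K_R=\exp\bigl(\sum_{i\in V}E(i)\,\overline R(i)\bigr)$ for every preimage $R\in\pi_V^{-1}(\overline R)$ with $\overline R\in\mathcal C_V$, so the ratio of the sums takes the same value, $\mathcal E(\overline R)=\sum_{i\in V}E(i)\,\overline R(i)$, and the circuit condition $\sum_j c(j)\mathcal E(\overline R_j)=c^T\textbf R_V^{\,T}\pi_V E=0$ is immediate.
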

    Here with equilibrium values we mean that the concentrations of the frozen substances are given by $e^{- E} $, where $E$ is a vector of energies associated to the non-reduced kinetic system. 
    
    The main result of Section \ref{sec:detailed balance reduced} is summarized in the following informal theorem. (This theorem summarizes the results in Theorem \ref{thm:same cycles implies db}, Proposition \ref{prop: instability of (DB) different cycles}, Proposition \ref{prop: instability of (DB) no one to one}, Proposition \ref{prop: instability of (DB) one to one and zeros}). 
    \begin{theorem}
    Consider a kinetic system that satisfies the detailed balance property. Then the reduced kinetic system obtained by freezing some concentrations does not satisfy the detailed balance property unless either the reaction rates are fine tuned, or the frozen concentrations are chosen at equilibrium values, or the cycles of the reduced system and of the non reduced system are the same.
    \end{theorem}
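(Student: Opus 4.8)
The plan is to assemble the stated dichotomy from the four cited results, organising the argument around the behaviour of the cycles of the chemical network under reduction. Recall from Section~\ref{sec:kinetic system DB} that a kinetic system has the detailed balance property if and only if the circuit (Wegscheider) condition holds along every cycle, i.e.\ along each cycle the product of the forward rates equals the product of the backward rates. Since the reduction multiplies each reaction rate by a fixed monomial in the frozen concentrations, the reduced system satisfies detailed balance if and only if, along every cycle of the \emph{reduced} network, the product of these modified forward rates equals the product of the modified backward rates. Thus everything reduces to comparing this family of algebraic constraints with the circuit conditions valid for the original system, which hold by hypothesis.

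First I would treat the favourable case. If the cycles of the reduced chemical network are the same as those of the original one, in the precise sense introduced in Section~\ref{sec:reduced cycles}, then along each such cycle the frozen-concentration monomials contributed by the forward and by the backward reactions coincide and cancel, so the reduced circuit condition is exactly the original one. This is Theorem~\ref{thm:same cycles implies db} and accounts for the clause ``or the cycles of the reduced system and of the non reduced system are the same''.

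For the complementary case I would argue by a genericity count. The detailed balance property of the original system cuts out a submanifold $\mathcal{D}$ of rate vectors (a linear subvariety in multiplicative coordinates, since the circuit conditions become linear after taking logarithms). On $\mathcal{D}$ one asks whether the reduced circuit conditions also hold; if they do not hold identically on $\mathcal{D}$, they hold only on a proper, lower-dimensional subvariety, which is precisely what ``the reaction rates are fine tuned'' means — unless the frozen concentrations are chosen at the equilibrium values $e^{-E}$, in which case Proposition~\ref{prop: DB when equilibirum conc} already yields detailed balance. It then remains to show that, when the cycles differ, the reduced circuit conditions are genuinely new constraints, and here the network combinatorics enters, split into the three cases of the instability propositions. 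When the correspondence between original and reduced cycles fails to be one-to-one, several distinct original cycles project onto the same reduced cycle, and the reduced circuit condition along that cycle is a relation among the original rates and the frozen concentrations not implied by the (independent) original circuit conditions: Proposition~\ref{prop: instability of (DB) no one to one}. When the correspondence is one-to-one but a reaction degenerates under reduction (its forward or backward monomial vanishes), the associated reduced cycle is strictly shorter than its preimage and again contributes a new constraint: Proposition~\ref{prop: instability of (DB) one to one and zeros}. The general different-cycles situation, Proposition~\ref{prop: instability of (DB) different cycles}, combines the two mechanisms. In each subcase one exhibits an explicit perturbation of the reaction rates inside $\mathcal{D}$ (equivalently, of the frozen concentrations away from equilibrium) that violates a reduced circuit condition, establishing the instability.

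I expect the main obstacle to be the bookkeeping in the non-injective and degenerate cases: using the description of reduced cycles from Section~\ref{sec:reduced cycles}, one must identify exactly which products of original rates and which powers of frozen concentrations appear along each reduced cycle, and then verify that the resulting system of circuit conditions for the reduced network is strictly larger — as a system of equations on parameter space — than the pullback of the original circuit conditions; equivalently, that the induced maps on the lattices of cycles do not have matching images. Once this linear-algebraic comparison is correctly set up, producing the destabilising perturbation is routine, and the whole difficulty lies in the combinatorics of how cycles are merged or shortened when passing to the reduction.
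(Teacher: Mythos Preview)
Your overall plan — split according to whether the three cycle conditions of Theorem~\ref{thm:same cycles implies db} hold, and handle each failure mode by one of the three instability propositions — is exactly what the paper does. The paper likewise proves the ``same cycles'' direction first and then shows instability case by case through explicit perturbations $\mathcal K_\delta$ of the rates (equivalently, of the energy vector $E$) that preserve detailed balance of $(\Omega,\mathcal R,\mathcal K_\delta)$ but break a reduced circuit condition.

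There is, however, a genuine problem with how you describe and intend to handle the three subcases. The notion ``$1$-$1$'' in the paper (Definition~\ref{def:11 and zero}) refers to \emph{reactions}, not cycles: $R\in\mathcal R_V$ is not $1$-$1$ when several original reactions project onto it, so that the reduced rate is a \emph{sum}, $K_R[n_U]=\sum_{\bar R\in\pi_V^{-1}(R)}\hat K_{\bar R}$. Similarly, ``zero-$V$-reduction'' means $\pi_V R=0$, i.e.\ the reaction disappears entirely from the reduced network; nothing vanishes monomially. And Proposition~\ref{prop: instability of (DB) different cycles} is not a combination of the other two mechanisms but the remaining case where conditions~1 and~2 of Theorem~\ref{thm:same cycles implies db} hold while $p(\mathcal C)\subsetneq\mathcal C_V$.

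This matters for your proposed method. Your plan to compare circuit conditions as linear systems ``after taking logarithms'' works in the $1$-$1$ cases (Propositions~\ref{prop: instability of (DB) different cycles} and~\ref{prop: instability of (DB) one to one and zeros}), and there the paper's explicit perturbations of $E$ are essentially your genericity argument made concrete. But in the non-$1$-$1$ case (Proposition~\ref{prop: instability of (DB) no one to one}) the reduced circuit condition involves $\log\bigl(\sum_{\bar R}\hat K_{\bar R}\,\alpha_{\bar R}\big/\sum_{\bar R}\hat K_{\bar R}\bigr)$, which is \emph{not} linear in the log-rates; your ``linear-algebraic comparison on the lattice of cycles'' no longer applies. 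The paper deals with this by perturbing a single constituent rate $K_{R_{\bar k_\ell}}$ (and its reverse so as to preserve the original detailed balance) and checking that the $\delta$-derivative of the reduced circuit sum is nonzero. You would need to replace the dimension count by this kind of local argument in that subcase.
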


    In Section \ref{sec:completion} instead we analyse if a kinetic system that is not closed admits a completion that is closed. 
    The main result of this section is Theorem \ref{thm:completion}, which informally written states the following. 
       \begin{theorem}
    Consider a kinetic system  that is not closed. Then it can be obtained as the reduction of a closed kinetic system.
    \end{theorem}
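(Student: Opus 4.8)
The plan is to prove Theorem~\ref{thm:completion} by an explicit construction of the \emph{completed kinetic system}: to the given (bidirectional, mass action) network one adjoins a single auxiliary substance per reaction, recalibrates the new rate constants, and declares the auxiliary substances to be the frozen ones. The proof then splits into two verifications: (i) the reduction of the enlarged system obtained by freezing the auxiliary substances reproduces the original system term by term, and (ii) the enlarged system is \emph{closed}, i.e.\ it has no influx/outflux reactions and it satisfies the detailed balance property.

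Concretely, suppose the given system has substance set $\Omega$ and reactions $r$, each of the form $y_r^{+}\leftrightarrows y_r^{-}$ with reactant/product complexes $y_r^{\pm}\in\N^{\Omega}$ and rate constants $k_r^{+},k_r^{-}>0$; write $n^{y}:=\prod_{i\in\Omega}n_i^{y_i}$ for the mass action monomial. For each $r$ I would introduce a fresh substance $F_r\notin\Omega$ occurring in no other reaction, replace $r$ by
\[
y_r^{+}+F_r\ \leftrightarrows\ y_r^{-},
\]
and assign the new rate constants $\widetilde{k}_r^{+}:=k_r^{+}/\phi_r$ and $\widetilde{k}_r^{-}:=k_r^{-}$, where $\phi_r>0$ is arbitrary (one may simply take $\phi_r=1$). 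The completed system then has substance set $\widetilde{\Omega}:=\Omega\cup\{F_r\}_r$, and I would declare $\{F_r\}_r$ frozen at the values $n_{F_r}\equiv\phi_r$. (If a completion in which every modified reaction has the physically suggestive form $A+\text{fuel}\leftrightarrows B+\text{waste}$ is preferred, one adjoins instead a second fresh substance $G_r$ on the product side and argues identically; the computation below is unchanged.)

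For step (i): after freezing $n_{F_r}\equiv\phi_r$, the mass action forward rate of reaction $r$ in the completed system equals $\widetilde{k}_r^{+}\phi_r\,n^{y_r^{+}}=k_r^{+}\,n^{y_r^{+}}$ and its backward rate equals $\widetilde{k}_r^{-}\,n^{y_r^{-}}=k_r^{-}\,n^{y_r^{-}}$; since $F_r$ does not enter the stoichiometry of the substances of $\Omega$, the evolution equations for $(n_i)_{i\in\Omega}$ in the reduced completed system coincide with those of the original system, so the reduction recovers it exactly. For step (ii): no exchange reactions have been introduced, so it only remains to exhibit a strictly positive equilibrium $\widetilde{n}=(\widetilde{n}_k)_{k\in\widetilde{\Omega}}$ with $\widetilde{k}_r^{+}\,\widetilde{n}^{\,y_r^{+}+e_{F_r}}=\widetilde{k}_r^{-}\,\widetilde{n}^{\,y_r^{-}}$ for all $r$, where $e_{F_r}$ denotes the unit vector at $F_r$. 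Taking logarithms, these are the linear equations $\langle\ln\widetilde{n},\ (y_r^{-}-y_r^{+})-e_{F_r}\rangle=\ln(\widetilde{k}_r^{+}/\widetilde{k}_r^{-})$ for $\ln\widetilde{n}\in\R^{\widetilde{\Omega}}$, and the key point is that the reaction vectors $(y_r^{-}-y_r^{+})-e_{F_r}$ are linearly independent: the $F_r$-coordinate of $\sum_r\lambda_r\big((y_r^{-}-y_r^{+})-e_{F_r}\big)$ is $-\lambda_r$, so that combination vanishes only if every $\lambda_r=0$. Hence the system is solvable for every right-hand side; explicitly, one fixes $\widetilde{n}_i>0$ arbitrarily for $i\in\Omega$ (say $\widetilde{n}_i=1$) and sets $\widetilde{n}_{F_r}:=(\widetilde{k}_r^{-}/\widetilde{k}_r^{+})\,\widetilde{n}^{\,y_r^{-}-y_r^{+}}>0$. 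Equivalently, in the completed network each modified reactant complex $y_r^{+}+F_r$ has degree one in the graph of complexes, being involved in reaction $r$ only, so no reaction lies on a cycle, the circuit / Wegscheider condition recalled in Section~\ref{sec:kinetic system DB} (see \cite{wegscheider1902simultane}) holds vacuously, and this already forces detailed balance for any choice of positive rate constants. This establishes (i) and (ii) and hence the theorem.

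The construction itself is short; the point that needs care is to satisfy \emph{simultaneously} the constraint that freezing recovers the original rates and the constraint that the enlarged system is detailed balanced, together with positivity of the equilibrium produced. Adjoining one fresh substance per reaction is exactly what decouples these: the fresh coordinate destroys every linear relation among the reaction vectors, so the Wegscheider obstructions to detailed balance disappear, while the frozen value $\phi_r$ supplies the single degree of freedom needed to rescale the forward rate back to $k_r^{+}$ (note that $\phi_r$ will in general differ from the equilibrium value $\widetilde{n}_{F_r}$ of the closed system, and this mismatch is what drives the reduced system out of equilibrium). I expect the genuinely delicate version of the question — treated separately, in the constrained setting of Section~\ref{sec:completion} where certain reactions are prescribed and may not be modified — to lose precisely this freedom, so that a closed completion need not exist, the obstruction being located at reactions that lie on cycles.
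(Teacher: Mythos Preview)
Your core idea---adjoin a fresh substance to each reaction so that the new reaction vectors become linearly independent, whence the Wegscheider circuit condition is vacuous and detailed balance holds for any choice of rates---is correct and is essentially the content of Step~3 of the paper's Proposition~\ref{prop:remove cycles}. However, ``closed'' in this paper (see the definition at the end of Section~\ref{sec:global stability}) means three things: detailed balance, \emph{conservativeness}, and $I(R)\neq\emptyset$, $F(R)\neq\emptyset$ for every $R\in\mathcal R$. You verify only the first.

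Your one-substance construction can fail the other two. If the original network contains a reaction with $y_r^{-}=0$, then $y_r^{+}+F_r\leftrightarrows\emptyset$ is still a sink. More seriously, conservativeness requires a strictly positive $m\in\mathbb R^{\widetilde\Omega}$ with $m\cdot\widetilde R_r=0$, i.e.\ $m_{F_r}=m_\Omega\cdot R_r$, which forces $m_\Omega\cdot R_r>0$ for every $r$; for the cyclic orientation of the $3$-cycle $(1)\!\to\!(2)\!\to\!(3)\!\to\!(1)$ one has $R_1+R_2+R_3=0$, so the three inequalities cannot hold simultaneously. The variant you mention parenthetically, with a second fresh substance $G_r$ on the product side, does repair both defects: each modified reaction has nonempty reactant and product complexes, and the extra degree of freedom makes the conservativeness constraint $m_{F_r}-m_{G_r}=m_\Omega\cdot R_r$ solvable in the positive orthant for any $m_\Omega>0$. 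You should promote that variant to the main construction and check these two points explicitly. The paper's proof treats the three requirements in three separate completion steps (remove sources/sinks, then enforce conservativeness, then destroy cycles); your two-substance construction is more economical in that it achieves all three simultaneously, once the missing verifications are supplied.
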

    This theorem guarantees the admissibility of any bidirectional chemical network. Here an admissible network is a network that is not closed, but can be obtained as the reduction of a closed kinetic system. 
    
In Section \ref{sec:kinetic system with fluxes} we define kinetic systems with fluxes. These are kinetic systems in which we have influxes and outfluxes of chemicals.
Let us stress that in this paper we study kinetic systems with fluxes that keep the concentration of some substances constant in time. As a consequence, if we focus on the substances that change in time, the dynamics of kinetic systems with fluxes and of the reduced kinetic systems is the same and therefore kinetic systems with fluxes are just reformulations of reduced kinetic systems.
The advantage of working with kinetic systems with fluxes instead of with reduced kinetic systems is that it is possible to obtain that the free energy of the kinetic system with fluxes $F$ satisfies the following equality
\[
\partial_t F = -\mathcal D_R + J^{ext}. 
\]
 Here $\mathcal D_R $ is the dissipation of free energy of the reduced kinetic system, while $J^{ext} $ is the contribution to the free energy due to the external fluxes.

\subsection{Notation}
 We define $\mathbb R_+$ and $\mathbb R_*$ to be given respectively by $ \mathbb R_* = [0, \infty )$ and $ \mathbb R_+ = (0, \infty )$. 
In some cases, to help the reader we indicate with 
$\textbf{0}_d $ the zero vectors belonging to $\mathbb R^d $. 
Moreover we denote with $e_i \subset \mathbb R^n $, for $ i \in 1, \dots n $, the vectors of the canonical basis of $\mathbb R^n$. 
Given two vectors $v_1 , v_2 \in \mathbb R^n $ we denote with $\langle v_1, v_2 \rangle $ their euclidean scalar product in $\mathbb R^n$.
Moreover, given a vector $v \in \mathbb R^n $, we will denote with $e^{v}$ the vector $ (e^{v(i)})_{i=1}^n \in \mathbb R^n$. Similarly it will be useful to denote with $\log(v) $ the vector $ (\log(v(i)))_{i=1}^n \in \mathbb R^n$. 

\section{Kinetic systems} \label{sec:review}
 In this section we recall the definition of kinetic systems and of some of their properties.
 This allows us to fix the notation that will be used later and to recall some properties that will be repeatedly used in the paper. 
 
 A kinetic system is a set of substances that interact via some chemical reactions, that take place at some given rates. 
 In some cases, the topological properties of a kinetic system, determine the qualitative behaviour of the kinetic system, which turns out to be independent on the rates of the reactions. An example of a qualitative property, that in some cases depends only on the topological properties of the network is the property of detailed balance that holds for every linear chemical network that is a tree (see \cite{feinberg2019foundations}). 
 This is the reason why we start this section by introducing the concept of chemical network, which is a set of substances and a set of chemical reactions and later we will define kinetic systems as a chemical networks to which we associate a kinetics, i.e. to which we associate the rates of the reactions.

\subsection{Chemical reaction networks} \label{sec:chemical reaction networks}
In this section we give the definition of chemical networks and introduce some of their properties, for instance we provide the definition of conservative chemical networks and of bidirectional chemical networks.
As we will see, a chemical network consists of a set of substances and a set of reactions and can be associated with a graph with vertices in $\mathbb Z^N$, where $N$ is the number of substances in the network. 
The properties that we formulate for chemical reaction networks are independent on the reactions rates associated to the reactions in $\mathcal R$. 
\begin{definition}[Chemical network]
  Let $\Omega:=\{ 1, \dots, N \}$. 
  Let $r \geq 1 $ and $\mathcal R := \{ R_1, \dots, R_r  \}  $ where $R_j \in \mathbb Z^N\setminus \{ 0\} $ for every $j \in \{ 1, \dots, r \} $. 
  Then we say that $(\Omega , \mathcal R )$ is a chemical network. 
\end{definition}
The elements of $\Omega$ are the \textit{substances} of the network, while $\mathcal R$ is the set of the \textit{reactions} of the network. Notice that we assume that $R \in \mathbb Z^N $. This means that the reactions that we consider take place between compositions that have a natural number of molecules of each substance.

Let $R \in \mathcal R $ be a reaction. We define the following three sets 
\[
I(R):=\{ i \in \Omega : R(i) <0 \}, \quad   F(R):= \{  i \in \Omega : R(i) >0 \} \ \text{ and } \ D(R):=I(R) \cup F(R).
\]
Hence, the chemical reaction $R$ transforms the chemicals in the set $I(R)$ into the chemicals in the set $F(R)$. The set $I(R)$ is the set of the initial substances, while $F(R) $ is the set of the final substances. 
In this paper we assume that $I(R) \cap F(R) = \emptyset $ for every $R \in \mathcal R$. Another assumption that we make in this paper is that every substance in $i \in \Omega $ belongs to at least one reaction, i.e. for every $i \in \Omega $ there exists an $R \in \mathcal R$ such that $ i \in  D(R)$ and that $D(R)\neq \emptyset$ for every $R \in \mathcal R$ (otherwise we would just remove the substance $i $ from the set of substances $\Omega$).

The set of the integer compositions that are accessible starting from the composition $\textbf{0}_N$ is the set $\mathcal A(0)$ defined as 
\begin{equation}\label{A(0)}
\mathcal A(0)= \{ v \in \mathbb Z^N : v= \sum_{k=1}^r \lambda_k R_k \text{ for some }  \{ \lambda_k \}_{k=1}^r \subset \mathbb Z.   \} 
\end{equation}
Similarly, we can define also the set  of the states accessible from a state $v_0 \in \mathbb Z^N $, this is 
\[ 
\mathcal A (v_0):= v_0 + \mathcal A(0).
\]
A chemical network $(\Omega, \mathcal R )$ can be viewed as a graph $ \mathcal G = (E, V) $ with a countable number of vertices in $\mathbb Z^N $. 
Indeed, we can define the graph $\mathcal G =(V, E) $ where
$V:=\mathcal A(0)$ and 
\[
E:= \{ (v_1, v_2) \in V \times V  : v_1 -v_2 \in \mathcal R \}. 
\]
We stress that this graph is directed. 
We say that a chemical network is \textit{bidirectional} if for every $R \in \mathcal R $ we have that $- R \in \mathcal R$. Notice that this means that the graph $(V,E) $ induced by the chemical network is symmetric.

Given a network $(\Omega, \mathcal R)$ we consider the set of non-reverse reactions $\mathcal R_s \subset \mathcal R $, obtained identifying each reaction $R$ with the reversed reaction $- R$. 
More precisely, the set $\mathcal R_s \subset \mathcal R$ is defined as 
\begin{equation} \label{non reversed set}
\mathcal R_s := \{ R \in \mathcal R : - R \notin \mathcal R  \} \cup  \{ R \in \mathcal R \setminus \{ R \in \mathcal R : - R \notin \mathcal R  \} : \min{I(R) } < \min {F(R)} \}.
\end{equation}
Hence we have if $R \in \mathcal R $ is non-reversible, i.e. $-R \notin \mathcal R $, then $R \in \mathcal R_s $. Instead if $ R ,  - R \in \mathcal R$ only one of the two reactions belong to $\mathcal R_s$. As a consequence for every $R_{1} , R_{2 } \in \mathcal R_s$ we have $R_{1} \neq - R_{2} $.
Notice that if the network $(\Omega, \mathcal R)$ is bidirectional, then $|\mathcal R_s| =r/2$.
We associate to the set of the reactions $\mathcal R$ the matrix $\textbf R \in \mathbb Z^{ N \times |\mathcal R_s|} $ defined as 
\begin{equation}\label{matrix reactions} 
\textbf R_{j k }  =  R_k(j) \text{ where } j \in \{1, \dots, N \}, \ k \in \{ 1, \dots,  |\mathcal R_s|\} \ \text{ and where } \ R_k  \in \mathcal R_s.
\end{equation}
In this paper we refer to the matrix $\textbf{R} $ as the \textit{matrix of the reactions} associated with the network $(\Omega, \mathcal R)$. 

We can now define the notion of \textit{cycles} of a chemical network $(\Omega, \mathcal R)$.
\begin{definition}[Cycles of a chemical network]
Let $(\Omega , \mathcal R) $ be a chemical network. Assume that $\textbf{R} \in \mathbb Z^{ N \times |\mathcal R_s|} $ is the matrix of the reactions. 
The space of the cycles of the chemical network $(\Omega, \mathcal R)$ is defined as
\[
\mathcal C := \ker (\textbf{R}).
\]
\end{definition}
In particular, we say that a chemical network has no cycles if $\ker (\textbf{R}) = \{ \textbf{0}_{|\mathcal R_s| } \} $. 
It is possible to interpret the definition of cycles in terms of the cycles of the graph $\mathcal G = (V,E) $ as follows. 
The fact that $c \in \ker (\textbf{R}) $ implies that there exists a sequence of reactions $\{ R_{i}\} _{\{i: c(i) \neq 0\}} \subset \mathcal R $ that, when applied to the composition $\textbf{0}_N$, produces the composition $\textbf{0}_N$, indeed by definition we have that 
\[
\sum_{i=1}^r c(i) R_{i}=\textbf{0}_N. 
\]
Hence this means that there exists a cycle that contains the composition $\textbf{0}_N $ in the graph $\mathcal G $ defined above. 
The values $\{ |c(i)|\}_{i=1}^{|\mathcal R_s|}$ are the numbers of times that the reaction $\textbf{R} e_i=R_i \in \mathcal R$ appears in the cycle.
In the following it is convenient to use the notation $R\in \mathcal C$ to indicate that there exists a $i\in \{1, \dots, |\mathcal R|/2 \} $ such that $R=\textbf{R} e_i $ and such that there exists a $c \in \mathcal C $ with $c(i)\neq 0 $. 
In order to clarify the notation introduced here we conclude this subsection with an example of chemical network that has cycles. 
\begin{example}
    Consider the chemical network $(\Omega, \mathcal R) $ associated with the  chemical reactions 
    \[ 
(1) \  \rightarrow  \  (2) , \quad      (2) +(2)  \rightarrow  \ (1)+(1)  . 
\]
Notice that in this example we have one directional reactions. 
The matrix of the reactions is 
\[
\textbf{R}:= 
\left( 
\begin{matrix}
&-1 &  2   \\
& 1 & -2  \\
\end{matrix} \right). 
\]
The space of the cycles is $\mathcal C := \operatorname{span} \{ (2,1)\} $. 
If we apply twice the reaction $R_1=\left( \begin{matrix}
-1   \\
 1   \\
\end{matrix} \right) $  and once the reaction $R_2= \left( \begin{matrix}
2   \\
 -2   \\
\end{matrix} \right)$ to the composition $(0,0)$ we obtain again the composition $(0,0)$, indeed
    \[ 
 (0,0)  \ \underset{R_1} \rightarrow  \ (-1, 1) \ \underset{R_1}\rightarrow \  (-2, 2) \ \underset{R_2}\rightarrow \ (0,0) . 
\]
\end{example}

\subsubsection{Conservative networks}
In this section we introduce the definition of conservative network. To this end we start by introducing the definition of stoichiometric subspace. 
 \begin{definition}[Stoichiometric subspace]
    The stoichiometric subspace $\mathcal S  $ of a reaction network $(\Omega , \mathcal R) $ is
    \begin{equation} \label{eq:stochio}
\mathcal S  := \operatorname{span}\{ R: R \in \mathcal R  \}. 
    \end{equation}
\end{definition}
Notice that $\mathcal S \subset \mathbb R^N$.

\begin{definition}[Stoichiometric compatibility classes]
    Let $(\Omega, \mathcal R)$ be a chemical network. Let $\mathcal S$ be the corresponding  stoichiometric subspace. 
    Two vectors $n, n' \in \mathbb R_+^N $ are stoichimetrically compatible if $n- n' \in \mathcal S$.
\end{definition}
 The stoichiometric compatibility is an equivalence relation. Given a vector $v\in \mathbb R_+^N$, the equivalence class $[v]_\mathcal S$ generated by $v$ is defined as  $[v]_\mathcal S:=\{ x \in \mathbb R_+^N : x-v \in \mathcal S \}  $. 
Consider an initial vector $n_0 \in \mathbb R_+^N$ of concentration of substances in a chemical network  $(\Omega, \mathcal R)$.
Let $n(t)$ be the evolution of the concentration $n_0 $ due to the chemical reactions taking place in the network, then it holds that $n(t)- n_0 \in \mathcal S $. 
Therefore, independently on the reaction rates, $n(t)$ will be stoichimetrically compatible with $n_0$ for every positive time $t>0$, i.e. $n(t) \in [n_0]_{\mathcal S} $.

We now explain that we can associate a set of conservation laws to a chemical network. 
\begin{definition}[Set of conservation laws]
    The set $\mathcal M $ of conservation laws of a chemical network $(\Omega , \mathcal R) $ is defined as
    \begin{equation} \label{eq:stochio}
\mathcal M  := \mathcal S^\perp.  
    \end{equation}
\end{definition}
Notice that, by definition, given a conservation law $m \in \mathcal M $ we have that
    \[
     m^T R = \textbf{0},\quad \forall R \in \mathcal R. 
    \]
    As a consequence the vector of the concentrations of substances at positive times, $n(t)$ obtained as the evolution of an initial vector of concentrations $n_0$ is such that 
\[
 m^T n_0 = m^T n(t), \quad \text{ for every } t >0 \text{ and for any } m \in \mathcal M. 
\]
This explains why we refer to $\mathcal M $ as the set of the conservation laws. 
Notice that by definition $\mathcal M \subset \mathbb R^N$, however, for physically relevant chemical networks we expect conservation laws to be non-negative. 
This motivates the following definition of the set $\mathcal M_+$ of non-negative conservation laws
\[
\mathcal M_+:=\mathcal M \cap \mathbb R_*^N .
\]

Now that we introduced the definition of conservation law we can write the definition of conservative network. 
\begin{definition}[Conservative chemical network]
We say that the network $(\Omega, \mathcal R )$ is conservative if
\begin{equation}\label{eq:conservative}
\mathcal M_+ \cap \mathbb R_+^N \neq \emptyset.
\end{equation}
\end{definition}
In particular, a chemical network is conservative if and only if every substance $i \in \Omega $ appears in a conservation law as explained in the following lemma. 
\begin{lemma}
    Let $(\Omega, \mathcal R) $ be a chemical network. Then the following statements are equivalent. 
    \begin{enumerate}
        \item  The chemical network is conservative.  
        \item For every $j \in \Omega$ there exists a $m \in \mathcal M $ such that $m(j) >0$. 
    \end{enumerate}
\end{lemma}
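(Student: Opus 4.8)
The plan is to prove the equivalence by showing each implication separately, using the orthogonality relation $\mathcal{M} = \mathcal{S}^\perp$ together with a standard separation/duality argument. The non-trivial direction is $(2) \Rightarrow (1)$; the direction $(1) \Rightarrow (2)$ is almost immediate.

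First I would prove $(1) \Rightarrow (2)$. Suppose the network is conservative, so there exists $m \in \mathcal{M}_+ \cap \mathbb{R}_+^N$, i.e. a conservation law $m$ with $m(j) > 0$ for \emph{every} $j \in \Omega$. Then for any fixed $j$, this very $m$ witnesses statement $(2)$: $m \in \mathcal{M}$ and $m(j) > 0$. So this direction requires no work beyond unwinding the definitions.

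Next I would prove $(2) \Rightarrow (1)$. The idea is to take, for each $j \in \Omega$, a conservation law $m_j \in \mathcal{M}$ with $m_j(j) > 0$, and then \emph{average}: set $m := \sum_{j \in \Omega} m_j$. Since $\mathcal{M} = \mathcal{S}^\perp$ is a linear subspace, $m \in \mathcal{M}$. For each fixed $i \in \Omega$, we have $m(i) = \sum_{j} m_j(i) = m_i(i) + \sum_{j \neq i} m_j(i)$. The term $m_i(i)$ is strictly positive by construction, but the other terms $m_j(i)$ for $j \neq i$ could in principle be negative, so positivity of $m(i)$ is not yet guaranteed. To fix this, I would instead build, for each $j$, a conservation law $\tilde m_j \in \mathcal{M}_+$ (nonnegative everywhere) with $\tilde m_j(j) > 0$, and then the sum $m := \sum_j \tilde m_j$ is automatically in $\mathcal{M}_+$ with $m(i) \geq \tilde m_i(i) > 0$ for every $i$, hence $m \in \mathcal{M}_+ \cap \mathbb{R}_+^N$ and the network is conservative.

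The main obstacle, therefore, is the step of upgrading a conservation law $m_j$ with $m_j(j) > 0$ (but possibly some negative entries) to a \emph{nonnegative} one $\tilde m_j$ still satisfying $\tilde m_j(j) > 0$. This is where one needs a convexity or alternative-type argument, and it is most naturally phrased as follows: fix $j$ and consider whether there exists $v \in \mathcal{S}$ with $v(j) > 0$ and $v(i) \geq 0$ for all $i$ (that is, a reachable "production" direction for substance $j$ using only net-nonnegative moves). If no such $v$ exists, then by a Farkas-lemma / separating-hyperplane argument applied to the convex cone $\mathcal{S} + \mathbb{R}_*^N$, there is a nonnegative functional $\tilde m_j \geq 0$ that annihilates $\mathcal{S}$ — i.e. $\tilde m_j \in \mathcal{M}_+$ — and with $\tilde m_j(j) > 0$, which is exactly what we want. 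Conversely, if such a $v$ does exist for \emph{every} $j$, one can show directly (again by a compactness/averaging argument over $j$, or by noting that the existence of such $v_j$ for all $j$ contradicts the existence of \emph{any} nonnegative conservation law with positive $j$-th entry unless one is careful) that this situation is incompatible with hypothesis $(2)$: pairing $v_j$ with the given $m_j$ would force a contradiction on signs. I would carry out the Farkas/separation dichotomy cleanly: for each $j$, either $(2)$ fails for that $j$ (no nonneg. conservation law positive at $j$), in which case the alternative gives a net-nonnegative production direction for $j$, and assembling these over all $j$ contradicts the hypothesis that a conservation law positive at each coordinate exists; or $(2)$ holds with nonnegative witnesses, and summing them yields conservativity. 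The cleanest route is probably: assume $(2)$, suppose for contradiction the network is not conservative, i.e. $\mathcal{M}_+ \cap \mathbb{R}_+^N = \emptyset$; apply the separation theorem to the disjoint convex sets $\mathcal{M}_+$ and $\mathbb{R}_+^N$ (or to $\mathcal{S}$ and the open positive orthant) to extract a vector in $\mathcal{S} \cap \mathbb{R}_*^N$ that is nonzero, say with positive $j$-th coordinate; then pair it with the $m_j$ from $(2)$ to get $0 = m_j^T v \geq m_j(j) v(j) > 0$ once one also arranges $v$ to have support where $m_j$ is nonnegative — this last alignment is the delicate bookkeeping, and handling it (likely by iterating the argument coordinate by coordinate, or invoking a dimension induction on $|\Omega|$) is the real content of the proof.
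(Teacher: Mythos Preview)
Your instinct that the naive summing argument is insufficient is correct, and you correctly locate the difficulty: if the witnesses $m_j \in \mathcal{M}$ from hypothesis (2) are allowed to have negative entries, then $\sum_j m_j$ need not lie in $\mathbb{R}_+^N$. However, your proposed repair via a Farkas/separation dichotomy cannot be carried through, because the implication $(2)\Rightarrow(1)$ is in fact \emph{false} as stated. Take $\Omega = \{1,2\}$ with a single reaction $R = (1,1)^T$, so that $\mathcal{S} = \operatorname{span}\{(1,1)\}$ and $\mathcal{M} = \operatorname{span}\{(1,-1)\}$. Then $(1,-1) \in \mathcal{M}$ witnesses (2) for $j=1$ and $(-1,1) \in \mathcal{M}$ witnesses (2) for $j=2$, so (2) holds; but every nonzero element of $\mathcal{M}$ has one strictly negative entry, so $\mathcal{M}_+ \cap \mathbb{R}_+^N = \emptyset$ and the network is not conservative. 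In your dichotomy this example lands in the ``such $v$ exists'' branch for both $j$ (take $v=(1,1)\in\mathcal S$), yet (2) still holds, so the claimed contradiction does not materialise; and the alignment you hoped for, ``$v$ supported where $m_j$ is nonnegative'', is impossible here since $m_1=(1,-1)$ is nonnegative only at coordinate $1$ while every nonzero $v\in\mathcal S$ has full support.

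The paper's own proof is precisely the naive summing argument you rejected: it sets $m = \sum_{j} m_j$ and asserts $m \in \mathbb{R}_+^N$ without justification, so it shares the gap you identified. The lemma becomes true, and the summing argument becomes valid, if in statement (2) one requires $m \in \mathcal{M}_+$ rather than $m \in \mathcal{M}$: then each $m_j$ is componentwise nonnegative and $m(i) \geq m_i(i) > 0$ for every $i$. This is presumably the intended statement.
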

\begin{proof}
    Assume that $2.$ holds. Then for every $j \in \Omega $ there exists a $m_j \in \mathcal M $ such that $m_j(j) >0$. Therefore $m = \sum_{j \in \Omega } m_j(j) \in \mathcal M $ is such that $m \in \mathbb R_+^N $ and therefore the network is conservative. The vice versa follows immediately by the fact that, by definition, there exists a $m \in \mathcal M_+ \cap \mathbb R_+^N $. 
\end{proof}

\begin{lemma} \label{lem:positivity of the conservation laws}
Assume that the chemical network $(\Omega, \mathcal R) $ is conservative. 
Then the set of the extreme rays of the positive cone $\mathcal M_+$ are a basis of $\mathcal M $. 
\end{lemma}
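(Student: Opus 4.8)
The plan is to combine two standard facts — the structure theorem for pointed polyhedral cones and the existence of a strictly positive conservation law — with one elementary trick. First I would record that $\mathcal M_+=\mathcal M\cap\mathbb R_*^N$ is a polyhedral cone, being the intersection of the linear subspace $\mathcal M$ with the finitely generated cone $\mathbb R_*^N$. The crucial structural observation is that $\mathcal M_+$ is \emph{pointed}: its lineality space is $\mathcal M_+\cap(-\mathcal M_+)=\mathcal M\cap\mathbb R_*^N\cap(-\mathbb R_*^N)=\mathcal M\cap\{\textbf{0}_N\}=\{\textbf{0}_N\}$. Hence, by the Minkowski--Weyl theorem for pointed polyhedral cones, $\mathcal M_+$ has finitely many extreme rays and equals their conical hull; in particular the real linear span of the extreme rays coincides with the linear span of $\mathcal M_+$, and this span is contained in $\mathcal M$ because $\mathcal M_+\subseteq\mathcal M$.

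It then remains to show $\operatorname{span}(\mathcal M_+)=\mathcal M$, and this is where conservativity is used. Since the network is conservative, by \eqref{eq:conservative} (equivalently by the previous lemma) there is $w\in\mathcal M$ with $w(i)>0$ for every $i\in\Omega$. Given an arbitrary $m\in\mathcal M$, I would choose $t>0$ large enough that $m(i)+t\,w(i)>0$ for all $i$ — possible precisely because every component of $w$ is strictly positive — so that $m+t\,w\in\mathcal M\cap\mathbb R_+^N\subseteq\mathcal M_+$; since also $w\in\mathcal M_+$, we get $m=(m+t\,w)-t\,w\in\operatorname{span}(\mathcal M_+)$. Thus $\mathcal M\subseteq\operatorname{span}(\mathcal M_+)\subseteq\mathcal M$, so equality holds. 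Combined with the previous paragraph, the extreme rays of $\mathcal M_+$ linearly span $\mathcal M$, and being the canonical minimal conical generating family of $\mathcal M_+$ they are exactly what one should call a basis here (a genuine vector-space basis of $\mathcal M$ can be extracted from them).

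I do not expect a serious obstacle: the argument is pure convex geometry. The one point that genuinely needs care is the appeal to the structure theorem — one must verify pointedness of $\mathcal M_+$, since a non-pointed cone need not be the conical hull of its extreme rays; the lineality-space computation above supplies exactly this. A secondary, purely terminological subtlety is that in general the extreme rays of $\mathcal M_+$ need not be linearly independent, so the statement ``are a basis'' is to be read as ``form the minimal generating system of the cone $\mathcal M_+$, which linearly spans $\mathcal M$.''
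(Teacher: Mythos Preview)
Your argument is correct. The approach differs from the paper's in a way worth noting. The paper proceeds by taking the strictly positive vector $\overline m\in\mathcal M_+\cap\mathbb R_+^N$, slicing $\mathcal M_+$ by the affine hyperplane $\overline m+\langle\overline m\rangle^\perp$ to obtain a compact convex set, and then invoking Krein--Milman to exhibit extremal points, which correspond to the extreme rays of $\mathcal M_+$. You instead work directly with the polyhedral structure: pointedness of $\mathcal M_+$ plus Minkowski--Weyl gives the conical generation by extreme rays in one stroke, and then you supply an explicit perturbation argument ($m\mapsto m+tw$) to show $\operatorname{span}(\mathcal M_+)=\mathcal M$. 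Your route is arguably cleaner on two counts: it makes the spanning step (why the extreme rays span all of $\mathcal M$, not just $\mathcal M_+$) fully explicit, whereas the paper leaves this implicit; and you are careful to flag that the extreme rays need not be linearly independent, so that ``basis'' must be read as a minimal conical generating family that spans $\mathcal M$ --- the paper asserts linear independence of the extreme generators without justification, which is not true in general for pointed polyhedral cones.
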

\begin{proof}
  Since the system is conservative there exists a vector $\overline m \in \mathcal M_+ \cap \mathbb R_+^N $. Let $V$ be the vector subspace of $\mathcal M $ generated by $\overline m$.  
  We define the following affine subspace $\mathcal L $ of $\mathcal M $ as $ \mathcal L := \overline m + V^\perp$.
Let us define the set 
  $\mathcal D:= \mathcal L \cap \mathcal M_+ $. The set $\mathcal D$ is convex and compact by definition. Therefore Krein–Milman theorem (see \cite{conex1970An}) guarantees that the set $\mathcal D$ is the convex hull of its extremal points and the set of the extremal points of $\mathcal D$ is non-empty. Notice that the extremal points of $\mathcal D$ define the set of the extreme generators of the cone $\mathcal M_+$.
  The extreme generators are linear independent vectors, hence the number of extremal points is finite and they generate $\mathcal M_+$. The desired conclusion follows. 
\end{proof}

\begin{example}
Consider the chemical network corresponding to the reactions 
\[
(1)+(2) \rightarrow (3)+(3), \quad (3) \rightarrow  (2). 
\]
Notice that in this example we have one directional reactions. 
Then $\Omega :=\{1, 2, 3 \} $ and the set of the reactions is 
\[
\mathcal R := \left\{ \left(  \begin{matrix} 
   -    1 \\
    -  1 \\
       2   
\end{matrix} \right), \left(  \begin{matrix} 
   0  \\
      1 \\
       - 1   
\end{matrix} \right) \right\}.
\]
Then $\mathcal M = \operatorname{span}\{ (1,1,1)^T \}$.  
Therefore the chemical network $(\Omega, \mathcal R)$ is conservative. 
Instead, consider the chemical network induced by 
\[
(1)+(2) \rightarrow (3), \quad (3) \rightarrow  (2). 
\]
It is easy to see that $\mathcal M = \operatorname{span}\{ (0,1,1) \}$.  
Therefore the chemical network is not conservative.
\end{example}

\subsection{Kinetic systems: chemical networks endowed with a rate function}
\label{sec:kinetic system}
In this section we state the definition of kinetic systems. A kinetic system is a chemical network to which we associate a kinetics. In particular in this paper we will always consider mass action kinetics, hence it is enough to associate to the set of reactions a set of reactions rates.

A \textit{reaction rate function} $\mathcal K : \mathcal R \rightarrow \mathbb R_+$ is a function that associate to each reaction its rate, i.e. $\mathcal K: R \mapsto \mathcal K(R)=:K_R \in \mathbb R_+$. 
We are now ready to give the definition of kinetic system. 
\begin{definition}[Kinetic system]
A kinetic system $(\Omega , \mathcal R, \mathcal K)$ is a chemical network $(\Omega , \mathcal R )$ endowed with the reaction rate function $\mathcal K$. The kinetic system $(\Omega, \mathcal R , \mathcal K )$ is bidirectional if the chemical network $(\Omega , \mathcal R )$ is bidirectional and is conservative if the chemical network $(\Omega , \mathcal R )$ is conservative. 
\end{definition}
We can associate to a kinetic system $(\Omega, \mathcal R, \mathcal K ) $ a system of ODEs describing the evolution in time of the concentrations of species in the network, $ n:=(n_1, \dots, n_N)^T \in \mathbb R_*^N $, i.e. 
\begin{equation} \label{ODEs}
\frac{d n (t) }{dt} = \sum_{ R \in \mathcal R } K_R R  \prod_{i\in I(R)} {(n_i)}^{-R(i)} , \quad n(0)=n_0 \in \mathbb R_*^N.
\end{equation}
We explain briefly how to interpret the system of ODEs. 
Let $n_i(t) $ be the concentration of the substance $ i \in \Omega $. The evolution of $n_i$ is driven by a loss and a gain term. The gain term is due to all the reactions that produce $i $, i.e. by all the reaction such that $ i \in F(R) $, hence $R(i)>0$. The gain term of each reaction is given by mass-action law, hence is given by $K_R \prod_{i \in I(R) }  (n_i)^{-R(i)}$. 
Similarly, the loss term is due to the reactions that are such that $ i \in I(R)$, hence $R(i) <0$. The contribution of the reaction $R $ to the loss term is given by $K_R \prod_{i \in I(R) }  (n_i)^{-R(i)}$.

The following lemma is useful to rewrite the system of ODEs \eqref{ODEs} in terms of the fluxes $J_R $ when the kinetic system is bidirectional. 
\begin{lemma}
    Assume that $(\Omega, \mathcal R, \mathcal K)$ is a bidirectional kinetic system. Then, for every $n \in \mathbb R_*^N $ we have that 
 \[
  \sum_{ R \in \mathcal R } K_R R  \prod_{i \in I(R)} {(n_i)}^{-R(i)} = \sum_{ R \in \mathcal R_s } R J_R(n), 
\]
where the fluxes $J_R (n) $ are defined as
\begin{equation} \label{eq:fluxes}
    J_R (n):= K_R \prod_{ j \in I(R)  } {(n_j)}^{-R(j)} - K_{- R }  \prod_{ j \in F(R)  } {n_j}^{R(j)}. 
\end{equation}
\end{lemma}
\begin{proof} 
Since $(\Omega, \mathcal R, \mathcal K)$ is bidirectional, then we have that 
\begin{align*}
  \sum_{ R \in \mathcal R } K_R R  \prod_{i \in I(R)} {(n_i)}^{-R(i)} &=  \sum_{ R \in \mathcal R_s } K_R R  \prod_{i \in I(R)} {(n_i)}^{-R(i)} - \sum_{ R \in \mathcal R_s } K_{-R} R  \prod_{i \in I(-R)} {n_i}^{R(i)} \\
  & = \sum_{ R \in \mathcal R_s } R \left(  K_R  \prod_{i \in I(R)} {(n_i)}^{-R(i)} - K_{-R}  \prod_{i \in F(R)} {n_i}^{R(i)} \right) \\
  &= \sum_{ R \in \mathcal R_s } R J_R(n). 
\end{align*}
\end{proof}
As a consequence, when the kinetic system $(\Omega, \mathcal R, \mathcal K)$ is bidirectional the system of ODEs \eqref{ODEs} can be written as 
\begin{equation} \label{ODEs fluxes}
\frac{dn(t)}{dt} =  \sum_{ R \in \mathcal R_s } R J_R(n), \quad n(0)= n_0 \in \mathbb R_*^N.
\end{equation}

\section{Detailed balance property of kinetic systems}\label{sec:kinetic system DB}
In this section we study one of the most important properties of kinetic systems: the property of detailed balance. 
As explained in the introduction, the detailed balance property is a fundamental property that follows by the physical principle of microscopic reversibility.  Every kinetic system that does not exchange substances with the environment satisfies the detailed balance property. However, as will be shown in Section \ref{sec:reduction}, the detailed balance property does not necessary hold for kinetic systems that exchange substances with the environment. 
From the mathematical point of view, an important consequence of the detailed balance property of kinetic systems is the existence of a unique positive stable steady state describing the long-time behaviour of the kinetic system.

In this Section we state results that are standard, but we collect them here in a way that is convenient for our purposes. 
The rest of this section will be organized as follows. In Section \ref{sec:DB} we introduce the definition of detailed balance. In Section \ref{sec:energy} we explain how we associate an energy to each composition in the network. In Section \ref{sec:global stability} we review the proof written in \cite{feinberg2019foundations} of the existence of a unique globally stable steady state for the ODEs system \eqref{ODEs} associated with a kinetic system that satisfies the detailed balance property. 

\subsection{Detailed balance property} \label{sec:DB}
In this section, we give the definition of detailed balance property for kinetic systems and we state three conditions that are equivalent to the detailed balance property. 
The detailed balance property for bidirectional kinetic system states that each reaction is balanced by its reverse reaction at the steady state. 

\begin{definition}[Detailed balance property]
  A bidirectional kinetic system $(\Omega, \mathcal R, \mathcal K ) $ satisfies the detailed balance property if there exists a $\overline N \in \mathbb R_+^N $ of \eqref{ODEs} such that  
  \begin{equation}\label{eq:DB}
K_R \prod_{i\in I(R)}  {(\overline N_i)}^{-R(i) } = K_{-R} \prod_{i \in F(R) } {\overline N_i}^{R(i) } \  \text{ for all } R\in \mathcal R_s. 
  \end{equation}
\end{definition}
Notice that by the definition of detailed balance we have that $\overline N$ is such that $J_R(\overline N) =0$ for every $R \in \mathcal R$. Therefore $\overline N$
is a steady state of \eqref{ODEs}.

We now state conditions that are equivalent to the detailed balance property and that are useful in order to check if a kinetic system satisfies the detailed balance property or not.
\begin{lemma} \label{lem:Db equivalence}
Let $(\Omega, \mathcal R , \mathcal K )$ be a bidirectional kinetic system. 
  The following statements are equivalent. 
  \begin{enumerate}
      \item The system satisfies the detailed balance property. 
      \item Condition \eqref{eq:DB} holds for every positive steady state of the system of ODEs \eqref{ODEs} corresponding to $(\Omega, \mathcal R, \mathcal K) $.
      \item Let $\textbf{R}$ be the matrix of the reactions.
      For every cycle $c \in \mathcal C $ it holds that 
      \begin{equation} \label{eq:cycles and DB}
      \prod_{ j=1}^{r/2} \left( \frac{K_{R_j}}{K_{-R_j}} \right)^{c(j)}  = 1, 
      \end{equation}
      here $r=|\mathcal R|$.
  \end{enumerate}
\end{lemma}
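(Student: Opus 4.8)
The plan is to establish $1\Leftrightarrow 3$ and $1\Leftrightarrow 2$, the heart of the matter being a logarithmic reformulation of \eqref{eq:DB} together with a single convexity inequality. First I would pass to logarithmic variables: for $n\in\mathbb R_+^N$ put $x=\log(n)\in\mathbb R^N$, and for $R\in\mathcal R_s$ write $R=R^+-R^-$ with $R^+,R^-\in\mathbb R_*^N$ supported on $F(R)$ and $I(R)$ respectively. Then $\prod_{i\in I(R)}(n_i)^{-R(i)}=e^{\langle R^-,x\rangle}$ and $\prod_{i\in F(R)}(n_i)^{R(i)}=e^{\langle R^+,x\rangle}$, so the flux from \eqref{eq:fluxes} becomes $J_R(n)=K_R e^{\langle R^-,x\rangle}-K_{-R}e^{\langle R^+,x\rangle}$, and $J_R(n)=0$ is equivalent to $\langle R,x\rangle=b_R$ with $b_R:=\log(K_R/K_{-R})$. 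Setting $b:=(b_{R_k})_{k=1}^{r/2}\in\mathbb R^{r/2}$ and using $(\textbf{R}^T x)_k=\langle R_k,x\rangle$, condition \eqref{eq:DB} at $\overline N=e^x$ is precisely $\textbf{R}^T x=b$. Observe moreover that if $J_R(\overline N)=0$ for every $R\in\mathcal R_s$, then \eqref{ODEs fluxes} forces $\frac{d}{dt}n=0$ at $\overline N$, so such an $\overline N$ is automatically a steady state of \eqref{ODEs}; this is why statement $1$ only needs to posit a positive vector satisfying \eqref{eq:DB}.

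With this dictionary, $1\Leftrightarrow 3$ is the linear-algebra identity $\operatorname{range}(\textbf{R}^T)=\ker(\textbf{R})^\perp=\mathcal C^\perp$: statement $1$ says $b\in\operatorname{range}(\textbf{R}^T)$, while for $c\in\mathcal C$ one computes $\langle c,b\rangle=\log\prod_{j=1}^{r/2}(K_{R_j}/K_{-R_j})^{c(j)}$, so $b\in\mathcal C^\perp$ is exactly the assertion that \eqref{eq:cycles and DB} holds for every $c\in\mathcal C$. For $1\Rightarrow 2$ I would argue as follows. Let $n^*$ be an arbitrary positive steady state, $x^*=\log(n^*)$, and put $a_R:=K_R e^{\langle R^-,x^*\rangle}>0$ and $y_R:=\langle R,x^*\rangle-b_R$, so that $J_R(n^*)=a_R(1-e^{y_R})$. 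On one hand $J(n^*):=(J_R(n^*))_{R\in\mathcal R_s}\in\ker(\textbf{R})$, since $n^*$ is a steady state of \eqref{ODEs fluxes} and hence $\textbf{R}\,J(n^*)=0$. On the other hand, statement $1$ provides $\overline x$ with $\textbf{R}^T\overline x=b$, so $y:=\textbf{R}^T x^*-b=\textbf{R}^T(x^*-\overline x)\in\operatorname{range}(\textbf{R}^T)=\ker(\textbf{R})^\perp$. Hence $0=\langle J(n^*),y\rangle=\sum_{R\in\mathcal R_s}a_R(1-e^{y_R})y_R$; since $a_R>0$ and the scalar function $t\mapsto(1-e^t)t$ is nonpositive and vanishes only at $t=0$, every term must vanish, forcing $y_R=0$ and therefore $J_R(n^*)=0$ for all $R\in\mathcal R_s$, which is \eqref{eq:DB}.

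It remains to prove $2\Rightarrow 1$. For this I would invoke the classical fact that a bidirectional (hence weakly reversible) mass-action kinetic system always admits at least one positive steady state of \eqref{ODEs} (see \cite{feinberg2019foundations}); statement $2$, applied to such a steady state, yields \eqref{eq:DB} and thus statement $1$. I expect this to be the only genuinely delicate step: one must be sure that existence of a positive equilibrium is available at the generality in which the lemma is stated (in particular without assuming conservativity), whereas the convexity inequality for $(1-e^t)t$ and the identity $\operatorname{range}(\textbf{R}^T)=\ker(\textbf{R})^\perp$ are routine. I would also be careful to index $\mathcal C=\ker(\textbf{R})$ consistently over $\mathcal R_s$ throughout, so that the exponents $c(j)$ in \eqref{eq:cycles and DB}, the columns of $\textbf{R}$, and the components of $b$ all refer to the same enumeration of the non-reversed reactions.
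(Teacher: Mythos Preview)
Your argument is correct. The paper does not actually prove this lemma: it merely cites Theorem~14.2.1 of \cite{feinberg2019foundations} for $1\Leftrightarrow 2$ and \cite{feinberg1989necessary} for $1\Leftrightarrow 3$. What you have written is a self-contained version of those standard arguments. The identification of statement~$1$ with solvability of $\textbf{R}^T x=b$ and hence with $b\in\operatorname{range}(\textbf{R}^T)=\ker(\textbf{R})^\perp=\mathcal C^\perp$ is exactly the Wegscheider/circuit criterion behind $1\Leftrightarrow 3$, and your entropy-type inequality $\sum_{R}a_R(1-e^{y_R})y_R\le 0$ with equality only at $y=0$, combined with the orthogonality $J(n^*)\in\ker(\textbf{R})\perp\operatorname{range}(\textbf{R}^T)\ni y$, is precisely the mechanism Feinberg uses for $1\Rightarrow 2$. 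You have also correctly isolated the one genuinely nontrivial external input: $2\Rightarrow 1$ requires the existence of at least one positive steady state, which holds for reversible (hence weakly reversible) mass-action systems and is supplied by the cited reference. In short, your proof is not a different route from the paper's; it is the paper's citations unpacked.
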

\begin{proof}
    For the proof of the equivalence between 1 and 2 we refer to the proof of Theorem 14.2.1 in \cite{feinberg2019foundations}. 
For the equivalence between property 3 and 1 we refer to \cite{feinberg1989necessary}.
\end{proof}
The condition 3. in Lemma \ref{lem:Db equivalence} is often referred in the literature as \textit{circuit condition} or \textit{Wegscheider criterion} (see \cite{wegscheider1902simultane}).
\subsection{Energy associated to kinetic systems with detailed balance} \label{sec:energy}
In this section we explain that when a kinetic system satisfies the detailed balance property, then it is possible to associate to each chemical in the system a vector of energies. 
As will be explained later this energy determines the values of the steady states of the system of ODEs \eqref{ODEs}.

We start by associating an energy to each of the reactions taking place in the chemical network. 
 Consider a kinetic system $(\Omega, \mathcal R)$ that is bidirectional. We define the \textit{Gibbs free energy function} associated to the set of reactions $\mathcal R $ as the function $\mathcal E: \mathcal R \rightarrow \mathbb R$ that maps each reaction to an energy as follows
\[
\mathcal E (R): = \log \left( \frac{K_{-R}}{ K_{R}} \right).  
\]
The energy $\mathcal E(R) $ associated with the reaction $R \in \mathcal R $ describe the change of energy that takes place during the reaction $R$. 

We extend the definition of the energy function $\mathcal E $ to sequences of reactions.
To this end we identify the vectors $x \in \mathbb R^{r/2} $ with the sequence $\{ ( x(i), R_i )\}_{i=1}^{r/2} $ where $R_i = \textbf{R} e_i$ and where the number $x(i) $ is the number of times that the reaction $R_i$ takes place. 
We define the energy function $\overline {\mathcal E} $ associated with a sequence of reactions as the function $\overline {\mathcal E }: \mathbb R^{r/2} \rightarrow \mathbb R $ defined by
\[
\overline{\mathcal E }(x) := \sum_{i=1}^{r/2} x(i) \mathcal E (R_i). 
\]
Notice that $\overline {\mathcal E }(x) $ is the sum of the energies of the reactions induced by the sequence $x$. Hence from the physical point of view, it is the total change in the energy of the system induced by the sequence of reactions $x$.
\begin{lemma} \label{lem:energy on cycles is zero}
Assume that $(\Omega, \mathcal R, \mathcal K ) $ is a kinetic system that satisfies the detailed balance condition.  
Let $c \in \mathcal C $, then $\overline{\mathcal E}(c) =0$. 
\end{lemma}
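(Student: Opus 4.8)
The plan is to derive this directly from the circuit condition, i.e. from the equivalence between the detailed balance property and statement 3 of Lemma \ref{lem:Db equivalence}. Since $c \in \mathcal C = \ker(\textbf{R})$, we may apply \eqref{eq:cycles and DB}: the detailed balance property guarantees that
\[
\prod_{j=1}^{r/2} \left( \frac{K_{R_j}}{K_{-R_j}} \right)^{c(j)} = 1.
\]
The idea is simply to take the logarithm of this identity and recognize the resulting expression as $\overline{\mathcal E}(c)$ up to a sign.

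Concretely, I would first recall that by definition $\mathcal E(R_j) = \log\!\left( K_{-R_j}/K_{R_j} \right)$, so that $-\mathcal E(R_j) = \log\!\left( K_{R_j}/K_{-R_j} \right)$. Taking $\log$ of the product identity above and using that the logarithm turns the product into a sum and the exponents $c(j)$ into coefficients, one obtains
\[
\sum_{j=1}^{r/2} c(j)\, \log\!\left( \frac{K_{R_j}}{K_{-R_j}} \right) = 0,
\]
which is precisely $-\sum_{j=1}^{r/2} c(j)\, \mathcal E(R_j) = -\overline{\mathcal E}(c) = 0$, hence $\overline{\mathcal E}(c) = 0$. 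This uses the definition of $\overline{\mathcal E}$ on sequences of reactions, identifying $c$ with the sequence $\{(c(i), R_i)\}_{i=1}^{r/2}$.

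There is essentially no obstacle here: the statement is a direct logarithmic reformulation of the Wegscheider criterion, and all the needed machinery (the definition of $\mathcal E$, of $\overline{\mathcal E}$, and the equivalence in Lemma \ref{lem:Db equivalence}) is already in place. The only minor point worth stating explicitly is that all the reaction rates $K_R$ are strictly positive (as $\mathcal K : \mathcal R \to \mathbb R_+$), so every quotient $K_{R_j}/K_{-R_j}$ is a well-defined positive real number and the logarithm is legitimate; no case analysis or positivity argument beyond this is required.
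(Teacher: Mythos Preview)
Your proof is correct and follows essentially the same approach as the paper: invoke the circuit condition from Lemma~\ref{lem:Db equivalence}, take logarithms, and recognize the resulting sum as $-\overline{\mathcal E}(c)$ using the definition of $\mathcal E$. The paper's proof is simply a more compressed version of what you wrote.
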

\begin{proof}
  The statement follows from Lemma \ref{lem:Db equivalence} and the definition of the map $\mathcal E$. Indeed 
         \[
\overline{\mathcal E}(c)= \sum_{j =1 }^{r/2} c(j) \mathcal E(R_j)  =0,
      \]
      where for every $j \in \{1, \dots, r/2\} $ we have that $R_j = \textbf{R} e_j $ and where $\textbf{R} $ is the matrix of the reactions. 
\end{proof}
Finally we extend the definition of energy to the mixtures/compositions that are formed in the network, i.e. to the set of the compositions $\mathcal A(0)$ reachable starting from the complex $\textbf{0}_N$ via the reactions in the chemical network. 
Assume that $\xi \in \mathcal A (0)$ where we recall that $\mathcal A(0) $ is defined by \eqref{A(0)}.
Then we have that there exists a $x \in \mathbb R^{r/2} $ such that $ \xi = \textbf{R} x $. We define the energy of the compositions in $\mathcal A(0)$ as the map $\tilde{\mathcal E} : \mathcal A(0) \rightarrow \mathbb R$ defined by 
\[
\tilde{\mathcal E} (\xi) := \overline{ \mathcal E}(x).
\]

Lemma \ref{lem:energy on cycles is zero} guarantees that $\tilde { \mathcal E} $ is well defined for kinetic systems that satisfy the detailed balance property as we explain in the following Lemma. 
\begin{lemma}
Assume that $(\Omega , \mathcal K , \mathcal R) $ is a kinetic system that satisfies the detailed balance property. 
    Then the map $\tilde{\mathcal E}$ is well defined. 
\end{lemma}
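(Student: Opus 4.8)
The plan is to verify that the value $\tilde{\mathcal E}(\xi) := \overline{\mathcal E}(x)$ is independent of the choice of $x \in \mathbb R^{r/2}$ satisfying $\xi = \textbf{R} x$. First I would record the elementary fact that $\overline{\mathcal E} : \mathbb R^{r/2} \rightarrow \mathbb R$ is a linear map, which is immediate from its definition $\overline{\mathcal E}(x) = \sum_{i=1}^{r/2} x(i)\, \mathcal E(R_i)$ as a linear functional with coefficients $\mathcal E(R_i) = \log(K_{-R_i}/K_{R_i})$.

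Next I would take two representations $\xi = \textbf{R} x = \textbf{R} x'$ with $x, x' \in \mathbb R^{r/2}$, and observe that $\textbf{R}(x - x') = \textbf{0}_N$, so that $x - x' \in \ker \textbf{R} = \mathcal C$. Since the system satisfies the detailed balance property, Lemma \ref{lem:energy on cycles is zero} applies to the cycle $c := x - x' \in \mathcal C$ and yields $\overline{\mathcal E}(x - x') = 0$. Combining this with the linearity of $\overline{\mathcal E}$ gives $\overline{\mathcal E}(x) - \overline{\mathcal E}(x') = \overline{\mathcal E}(x - x') = 0$, that is, $\overline{\mathcal E}(x) = \overline{\mathcal E}(x')$. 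This is exactly the assertion that $\tilde{\mathcal E}$ is well defined, and it also shows consistency with the previously introduced definitions of $\mathcal E$ on single reactions and $\overline{\mathcal E}$ on sequences.

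I do not expect any real obstacle: the only substantive ingredient is Lemma \ref{lem:energy on cycles is zero}, which is precisely the point at which the detailed balance hypothesis enters (through its equivalence with the Wegscheider/circuit condition in Lemma \ref{lem:Db equivalence}). It is worth emphasising that this hypothesis cannot be dropped: if detailed balance fails there is some $c \in \mathcal C$ with $\overline{\mathcal E}(c) \neq 0$, and then the two representations $x = \textbf{0}_{r/2}$ and $x = c$ of the composition $\textbf{0}_N \in \mathcal A(0)$ would be assigned different energies, so the formula for $\tilde{\mathcal E}$ would not define a function.
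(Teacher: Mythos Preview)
Your proposal is correct and follows essentially the same approach as the paper: take two representations $\xi = \textbf{R} x = \textbf{R} x'$, observe that $x - x' \in \ker \textbf{R} = \mathcal C$, invoke Lemma \ref{lem:energy on cycles is zero} to get $\overline{\mathcal E}(x - x') = 0$, and conclude by linearity. Your additional remark on the necessity of the detailed balance hypothesis is a nice complement but goes beyond what the paper records.
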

\begin{proof}
Assume that $\xi = \textbf{R} x_1 = \textbf{R} x_2 $. This implies that $x_1-x_2 \in \mathcal C$, hence $\mathcal E(x_1)- \mathcal E(x_2)=\mathcal E(x_1-x_2)=0$ and the fact that $ \tilde{\mathcal E} $  is well defined follows. 
\end{proof}
The energy $\tilde{\mathcal E}$ associated with the composition $\xi $ is just the change in the energy that is necessary to reach the composition $\xi $ starting from the composition $0$.
It is natural to expect that this energy is given by the sum of the energies of each of the chemicals in the composition $\xi $. 
In the following lemma we prove that this is the case. More precisely, we prove that if a kinetic system satisfies the detailed balance property, then the energy $\tilde{\mathcal E} $ associated with a mixture is additive, more precisely, it can be written as the weighted sum of the energies of the different substances that appear in the mixture, see \eqref{additivity}. 
\begin{lemma}[Additive energy]
    Assume that the kinetic system $(\Omega , \mathcal R , \mathcal K )$ satisfies the detailed balance property. 
    Let $w \in \mathbb R^{r/2} $ be defined as 
    \begin{equation} \label{w}
       w(i)=\mathcal E(R_i) \  \text{   for every } \  i=1, \dots , r/2 .
       \end{equation}
    Then there exists at least one solution $E \in \mathbb R^N$ to the equation 
    \begin{equation} \label{eq:energy}
    w = \textbf{R}^T  E.
    \end{equation}
    Moreover, for every $\xi \in \mathcal A(0)$ it holds that 
    \begin{equation} \label{additivity}
    \tilde{\mathcal E } (\xi ) = \sum_{j=1}^L \xi(j)   E (j)
    \end{equation}
    for every solution $E$ to \eqref{eq:energy}. 
\end{lemma}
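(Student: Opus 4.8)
The plan is to prove the two assertions separately: first the solvability of $w = \mathbf{R}^T E$, then the additivity formula \eqref{additivity}, which will follow almost immediately once solvability is established.

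For the existence of a solution $E \in \mathbb{R}^N$ to $w = \mathbf{R}^T E$, the standard linear-algebra criterion is that $w$ lies in the range of $\mathbf{R}^T$, equivalently $w \perp \ker(\mathbf{R})$. But $\ker(\mathbf{R}) = \mathcal{C}$ is precisely the space of cycles, so I must show $\langle w, c \rangle = 0$ for every $c \in \mathcal{C}$. By the definition \eqref{w} of $w$ we have $\langle w, c \rangle = \sum_{i=1}^{r/2} c(i)\,\mathcal{E}(R_i) = \overline{\mathcal{E}}(c)$, and Lemma \ref{lem:energy on cycles is zero} tells us exactly that $\overline{\mathcal{E}}(c) = 0$ because the system satisfies the detailed balance property. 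Hence $w \in (\ker \mathbf{R})^\perp = \operatorname{range}(\mathbf{R}^T)$, and at least one solution $E$ exists. (The solution is unique up to adding an element of $\ker(\mathbf{R}^T) = \mathcal{S}^\perp = \mathcal{M}$, which is the freedom in the vector of energies discussed in the introduction — worth a one-line remark but not needed for the statement.)

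For the additivity formula, fix $\xi \in \mathcal{A}(0)$ and any solution $E$ of \eqref{eq:energy}. By definition of $\mathcal{A}(0)$ there is some $x \in \mathbb{R}^{r/2}$ with $\xi = \mathbf{R} x$, and by definition $\tilde{\mathcal{E}}(\xi) = \overline{\mathcal{E}}(x) = \sum_{i=1}^{r/2} x(i)\,\mathcal{E}(R_i) = \langle x, w \rangle$. Now substitute $w = \mathbf{R}^T E$ to get $\langle x, w \rangle = \langle x, \mathbf{R}^T E \rangle = \langle \mathbf{R} x, E \rangle = \langle \xi, E \rangle = \sum_{j=1}^N \xi(j)\,E(j)$. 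This is \eqref{additivity} (the upper limit $L$ in the paper's display should presumably read $N$, the number of substances; I would write $N$). Since the previous lemma guarantees $\tilde{\mathcal{E}}$ is well-defined, the left side does not depend on the choice of $x$, which is consistent with the right side depending only on $\xi$.

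I do not anticipate a serious obstacle here: the whole argument is a transposition identity plus the already-proved fact that the energy functional vanishes on cycles. The only point requiring a little care is making the duality $\operatorname{range}(\mathbf{R}^T) = (\ker \mathbf{R})^\perp$ explicit and invoking Lemma \ref{lem:energy on cycles is zero} with the correct identification $c \leftrightarrow$ sequence of reactions, matching the notational convention set up before \eqref{w}. I would also flag the index mismatch ($L$ versus $N$) so the final formula is stated over all $N$ substances.
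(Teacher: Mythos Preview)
Your argument is correct. The additivity step is identical to the paper's: write $\tilde{\mathcal E}(\xi)=\overline{\mathcal E}(x)=w^T x$ and substitute $w=\mathbf R^T E$ to get $E^T\mathbf R x=E^T\xi$. For the existence of $E$, however, the paper takes a more direct and constructive route: it invokes the \emph{definition} of detailed balance, namely the existence of $\overline N\in\mathbb R_+^N$ satisfying \eqref{eq:DB}, takes logarithms to obtain $w=\mathbf R^T\ln(\overline N)$, and thereby exhibits the explicit solution $E=-\ln\overline N$. Your approach instead uses the Fredholm-type identity $\operatorname{range}(\mathbf R^T)=(\ker\mathbf R)^\perp$ together with Lemma~\ref{lem:energy on cycles is zero}. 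Both are valid; the paper's version has the advantage of identifying $E$ concretely (which is used later, e.g.\ in Lemma~\ref{lem:db special ss}), while yours makes the role of the cycle condition more transparent. Your remark on the index $L$ versus $N$ is well taken.
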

\begin{proof}
 The detailed balance property guarantees that there exists a $ \overline N \in \mathbb R_+^N $ such that 
\[
\ln (K_R ) - \sum_{i \in I(R) } R(i) \ln ( \overline N_i ) = \ln (K_{-R} ) + \sum_{i \in F(R) } R(i) \ln ( \overline N_i)\quad  \forall R \in \mathcal R . 
\]
This implies that 
\[
w = \textbf{R}^T \ln ( \overline N). 
\]
  Hence $E=- \ln \overline N$ is a solution to \eqref{eq:energy}. 

    Let $\xi \in \mathcal A(0)$. Hence $\xi = \textbf{R} x $ for some $x \in \mathbb Z^{r/2}$. 
   By the definition of $\overline {\mathcal  E }$ we have that
   \[
   \tilde{\mathcal E} (\xi ) =\overline {\mathcal E}(x) = \sum_{i=1}^{r/2} x(i) \mathcal E(R_i) = w^T x.
   \]
  Now notice that if the vector $E\in \mathbb R^{N} $ satisfies \eqref{eq:energy}, then  
   \[
 \tilde{\mathcal E }(\xi)= w^T x = E^T \textbf{R} x =E^T \xi, \quad \forall x \in \mathbb R^{r/2}. 
   \]
\end{proof}
Notice that the solution to \eqref{eq:energy} is unique only if and only if $\mathcal M =\{ 0\}$
Otherwise we have that if $E_1 $ is a solution to \eqref{eq:energy}, then also the vector $E_2 \in \mathbb R^L $ defined as 
\[
E_2(i) = E_1(i) + \sum_{j=1}^L \mu_j m_j(i), \quad \forall i \in \Omega,
\]
where $ \{ m_j\}_{j=1}^L $ with $ m_j \in \mathbb R^N$ is a basis of $\mathcal M $ and where $\{ \mu_j \}_{j=1}^L   $ are the chemical potentials, is also a solution to equation \eqref{eq:energy}. 
In the following we will refer to each of the solutions of equation \eqref{eq:energy} as the \textit{energies induced by the kinetic system $(\Omega, \mathcal R, \mathcal K ) $}. 
\subsection{Global stability of the steady states for kinetic systems with the detailed balance property} \label{sec:global stability}

An important consequence of the detailed balance property is the existence of a natural free energy function that guarantees the existence of a globally stable positive steady state. 
In this section we firstly recall that, when the  detailed balance property holds, the steady states of \eqref{ODEs} can be written as a function of the energies $E$ introduced in Section \ref{sec:energy}. We then recall that a unique steady state exists in each stoichiometric compatibility class. Finally we recall the proof the stability of this steady state.
\begin{lemma} \label{lem:db special ss}
Assume that the kinetic system $(\Omega, \mathcal R, \mathcal K)$ satisfies the detailed balance property. 
Then $N^* =(N^*_i)_{i=1}^N$ is a steady state of the system of ODEs \eqref{ODEs} if and only if
      \begin{equation}      \label{stst when DB}
      N^*_i=  e^{- E(i)}, \quad i \in \{ 1, \dots, N\} 
      \end{equation}
      where $E \in \mathbb R^N $  is an energy induced by the kinetic system $(\Omega, \mathcal R, \mathcal K)$.
      \end{lemma}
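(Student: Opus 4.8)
The plan is to characterize the steady states of \eqref{ODEs} directly using the equivalence between the detailed balance property and condition 2 of Lemma \ref{lem:Db equivalence}, together with the additive energy representation \eqref{additivity}. First I would prove the "if" direction: suppose $N^*_i = e^{-E(i)}$ for some energy $E$ induced by the system, i.e. a solution of $w = \textbf{R}^T E$ with $w(i) = \mathcal E(R_i)$. For each $R \in \mathcal R_s$, plugging $N^*$ into the flux \eqref{eq:fluxes} and taking logarithms, the detailed balance relation \eqref{eq:DB} becomes exactly $\log(K_R) - \log(K_{-R}) = \sum_{i \in F(R)} R(i) E(i) + \sum_{i \in I(R)} R(i) E(i) = \langle R, E \rangle = (\textbf{R}^T E)(k) = w(k) = \mathcal E(R) = \log(K_{-R}/K_R)$, which holds. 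Hence $J_R(N^*) = 0$ for every $R$, so by \eqref{ODEs fluxes} $N^*$ is a steady state. (This is essentially the computation already appearing inside the proof of the Additive energy lemma, read in reverse.)

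For the "only if" direction, let $N^*$ be any positive steady state of \eqref{ODEs}. By statement 2 of Lemma \ref{lem:Db equivalence}, since the system satisfies the detailed balance property, every positive steady state satisfies \eqref{eq:DB}. Taking logarithms in \eqref{eq:DB} and setting $E := -\log(N^*) \in \mathbb R^N$ (well defined since $N^* \in \mathbb R_+^N$), we obtain $\mathcal E(R) = \log(K_{-R}/K_R) = \langle R, E\rangle$ for every $R \in \mathcal R_s$, that is $w = \textbf{R}^T E$. Thus $E$ is a solution of \eqref{eq:energy}, hence an energy induced by the kinetic system, and $N^*_i = e^{-E(i)}$ as claimed. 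I would make sure to reconcile sign conventions between $I(R)$ and $F(R)$ carefully, since \eqref{eq:DB} splits the exponent into $-R(i)$ on $I(R)$ and $R(i)$ on $F(R)$, whereas $\langle R, E\rangle$ lumps them together using the sign of $R(i)$; this is a bookkeeping check, not a real difficulty.

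The only genuinely delicate point is the logical dependence: one must invoke statement 2 of Lemma \ref{lem:Db equivalence} — not merely the bare definition of detailed balance, which only asserts the existence of \emph{one} $\overline N$ satisfying \eqref{eq:DB}. The definition alone is not enough for the "only if" direction, because a priori there could be steady states not of the exponential form; it is precisely the equivalence $1 \Leftrightarrow 2$ that rules this out. So the main obstacle is conceptual rather than computational: being careful to use the right equivalent formulation of detailed balance. Everything else reduces to taking logarithms of the mass-action flux expressions and matching them against $\textbf{R}^T E$, which has already been done once in the excerpt.
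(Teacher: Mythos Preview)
Your proposal is correct and follows essentially the same route as the paper: invoke the equivalence $1\Leftrightarrow 2$ of Lemma~\ref{lem:Db equivalence} so that \emph{every} positive steady state satisfies \eqref{eq:DB}, take logarithms, and identify the resulting linear system with $\textbf{R}^T E = w$ via $E=-\log N^*$. The only blemish is the displayed chain in your ``if'' direction, where you equate $\log(K_R)-\log(K_{-R})$ with $\langle R,E\rangle=\log(K_{-R}/K_R)$ (a sign slip you already flag as bookkeeping); once that sign is tracked correctly the argument is exactly the paper's.
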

\begin{proof}
Lemma \ref{lem:Db equivalence} implies that the detailed balance property holds if and only if every steady state $N^*$ of \eqref{ODEs} satisfies 
\[
0= J_R (N^*)= K_R \prod_{ j \in I(R)  } {N^*_j}^{-R(j)} - K_{- R }  \prod_{ j \in F(R)  } {N^*_j}^{R(j)}, \quad \forall R \in \mathcal R_s.
\]
This holds if and only if 
\[
\frac{K_R}{K_{-R}}= \frac{ \prod_{j \in F(R) }{N^*_j}^{R(j)} }{\prod_{j \in I(R) }{(N^*_j)}^{-R(j)} } = \prod_{j \in D(R) }{N^*_j}^{R(j)},  \quad \forall R \in \mathcal R  .
\]
Therefore $N^*$ is a steady state of \eqref{ODEs} if and only if it satisfies
\[
0 = \sum_{j \in \Omega }{ R(j) \ln(N^*_j) } + \ln (K_{-R}) - \ln (K_R), \quad \forall R \in \mathcal R. 
\]
As a consequence $N^*$ is a steady state of \eqref{ODEs} if and only if it satisfies
$ \textbf{R}^T \ln (N^*) + w =0$
 where the vector $w \in \mathbb R^{r/2} $ is defined as in \eqref{w}. 
 Therefore $N^*$ is a steady state if and only if
 \[
 \ln (N^*) =- E
 \] 
 where $E $ is a solution to \eqref{eq:energy}. 
\end{proof}

The following following lemma provides a useful way to compute the fluxes when the detailed balance property holds. 
\begin{lemma} \label{lem:flux db}
    Let $(\Omega, \mathcal K, \mathcal R)$ be a kinetic system that satisfies the detailed balance condition. Then for every $ R \in \mathcal R$ we have that the fluxes $J_R $ defined as \eqref{eq:fluxes} satisfy 
    \begin{equation} \label{flux DB}
    J_R(n) =K_R  \prod_{i \in I(R)} (n_i)^{- R(i) }  \left(1-  \prod_{i\in \Omega} n_i^{ R(i) } e^{ R(i) E(i) } \right), \forall n \in \mathbb R_*^N 
    \end{equation} 
    where $E \in \mathbb R^N $ is an energy of $(\Omega, \mathcal R, \mathcal K )$, i.e. is any solution to \eqref{eq:energy}.
    \end{lemma}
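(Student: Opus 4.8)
The plan is to obtain \eqref{flux DB} directly from the definition \eqref{eq:fluxes} of $J_R$ by factoring out the forward term and rewriting the remaining factor in terms of the energy $E$. First I would fix $R \in \mathcal R$ (not only $R \in \mathcal R_s$) and, using that $n_j > 0$ for all $j$ and $K_R > 0$, factor
\[
J_R(n) = K_R \prod_{j \in I(R)} (n_j)^{-R(j)}\left( 1 - \frac{K_{-R}}{K_R}\cdot\frac{\prod_{j \in F(R)} n_j^{R(j)}}{\prod_{j \in I(R)}(n_j)^{-R(j)}}\right).
\]
It then remains to identify the product $\frac{K_{-R}}{K_R}\cdot\frac{\prod_{j \in F(R)} n_j^{R(j)}}{\prod_{j \in I(R)}(n_j)^{-R(j)}}$ with $\prod_{i \in \Omega} n_i^{R(i)} e^{R(i) E(i)}$.

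For the prefactor $K_{-R}/K_R$, I would first record the identity $\mathcal E(R) = \langle R, E\rangle$ for every $R \in \mathcal R$. For $R = R_i = \mathbf R e_i \in \mathcal R_s$ this is just equation \eqref{eq:energy} read in the $i$-th component, since $(\mathbf R^T E)(i) = \langle R_i, E\rangle$ and $w(i) = \mathcal E(R_i)$ by \eqref{w}; and for $R \in \mathcal R\setminus\mathcal R_s$ one has $-R \in \mathcal R_s$, so $\mathcal E(R) = -\mathcal E(-R) = -\langle -R, E\rangle = \langle R, E\rangle$. Since $\mathcal E(R) = \log(K_{-R}/K_R)$ by definition, this yields
\[
\frac{K_{-R}}{K_R} = e^{\langle R, E\rangle} = \prod_{i \in \Omega} e^{R(i) E(i)}.
\]

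For the monomial quotient I would use the standing assumptions $I(R)\cap F(R) = \emptyset$, $D(R) = I(R)\cup F(R)$, and $R(i) = 0$ for $i \notin D(R)$: then
\[
\frac{\prod_{j \in F(R)} n_j^{R(j)}}{\prod_{j \in I(R)}(n_j)^{-R(j)}} = \prod_{j \in F(R)} n_j^{R(j)}\cdot\prod_{j \in I(R)} n_j^{R(j)} = \prod_{j \in D(R)} n_j^{R(j)} = \prod_{i \in \Omega} n_i^{R(i)}.
\]
Multiplying the two displays above and substituting back into the factored form of $J_R$ gives exactly \eqref{flux DB}. The argument is essentially bookkeeping; the only points needing a moment's care are that $w = \mathbf R^T E$ directly encodes $\mathcal E$ only on the generators $\mathcal R_s$ (handled by the antisymmetry remark above, so that the formula holds for all of $\mathcal R$) and the matching of the exponent index sets $I(R)$, $F(R)$, $D(R)$ against $\Omega$. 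I do not expect any genuine obstacle here.
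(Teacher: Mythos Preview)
Your proof is correct and follows essentially the same route as the paper: factor out the forward term from the definition of $J_R$ and use the energy relation to rewrite $K_{-R}/K_R$ as $e^{\langle R,E\rangle}$. The paper invokes Lemma~\ref{lem:db special ss} to obtain this ratio, whereas you read it off directly from \eqref{eq:energy} and the definition of $\mathcal E$; you are also slightly more careful than the paper in extending the identity $\mathcal E(R)=\langle R,E\rangle$ from $\mathcal R_s$ to all of $\mathcal R$ via antisymmetry.
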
 
    \begin{proof}
        Lemma \ref{lem:db special ss} guarantees that the vector $ \overline N =e^{-E}$ is a steady state of \eqref{ODEs} if and only if $E$ satisfies $\textbf{R}^T E= w $. 
    Hence for every $R \in \mathcal R_s $ we have that \begin{equation}
       \frac{K_{-R} }{K_{R} } = e^{ R^T E }
    \end{equation}
    As a consequence substituting this in \eqref{eq:fluxes} we obtain that
    \begin{align*}
        J_R(n) = K_R \left( \prod_{i \in I(R)} (n_i)^{- R(i) } - \prod_{i \in F(R)} n_i^{ R(i) } \prod_{i\in \Omega}  e^{ R(i)E(i) } \right) =  K_R  \prod_{i \in I(R)} (n_i)^{- R(i) }  \left(1-  \prod_{i\in \Omega} n_i^{ R(i) } e^{ R(i)E(i) } \right).
    \end{align*}
\end{proof}

\begin{proposition} \label{prop:global stability db big}
  Assume that the kinetic system $(\Omega, \mathcal R, \mathcal K )$ satisfies the detailed balance property.
  Then the system of ODEs \eqref{ODEs} has a unique positive steady state $N$ in each stoichiometric compatibility class and the steady state is globally stable. 
\end{proposition}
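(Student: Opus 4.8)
The plan is to follow the classical Horn–Jackson / Feinberg argument: construct an explicit strictly convex Lyapunov functional whose unique minimizer in each stoichiometric compatibility class is the detailed-balance steady state, and then deduce global stability. Concretely, fix an energy $E \in \mathbb R^N$ induced by the system (which exists by the discussion in Section \ref{sec:energy}), set $\overline N = e^{-E}$, which by Lemma \ref{lem:db special ss} is a positive steady state, and define the free energy
\[
F(n) := \sum_{i=1}^N \left( n_i \log\frac{n_i}{\overline N_i} - n_i + \overline N_i \right), \qquad n \in \mathbb R_+^N.
\]
First I would record the elementary facts about $F$: it is strictly convex on $\mathbb R_+^N$ (each summand is strictly convex in $n_i$), it is bounded below by $0$ with $F(\overline N)=0$, and its sublevel sets intersected with a fixed compatibility class $[n_0]_{\mathcal S}$ are compact (the only escape is $n_i \to 0$ or $n_i \to \infty$, both controlled by the $n_i\log n_i$ growth together with the conservation laws that pin down the class; here conservativity of the network, or at least boundedness of the class, is used — I would invoke Lemma \ref{lem:positivity of the conservation laws} and the fact that steady states of \eqref{ODEs} live in a positive cone, as in \cite{feinberg2019foundations}).

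Next I would compute the dissipation. Along a solution $n(t)$ of \eqref{ODEs fluxes}, since $\nabla F(n) = \log(n/\overline N)$, one gets
\[
\frac{d}{dt} F(n(t)) = \left\langle \log\frac{n}{\overline N},\ \sum_{R\in\mathcal R_s} R\, J_R(n) \right\rangle = \sum_{R\in\mathcal R_s} \langle R, \log(n/\overline N)\rangle\, J_R(n).
\]
Using Lemma \ref{lem:flux db}, which expresses $J_R(n) = K_R \prod_{i\in I(R)}(n_i)^{-R(i)}\big(1 - \prod_{i} n_i^{R(i)} e^{R(i)E(i)}\big)$, and noting that $\prod_i n_i^{R(i)} e^{R(i)E(i)} = \exp\big(\langle R, \log(n/\overline N)\rangle\big)$, each term has the form $a_R\, x_R\,(1 - e^{x_R})$ with $a_R = K_R\prod_{i\in I(R)}(n_i)^{-R(i)} > 0$ and $x_R = \langle R,\log(n/\overline N)\rangle$. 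Since $x(1-e^x)\le 0$ for all $x\in\mathbb R$ with equality iff $x=0$, this shows $\frac{d}{dt}F(n(t)) \le 0$, with equality exactly when $\langle R,\log(n/\overline N)\rangle = 0$ for every $R\in\mathcal R_s$, i.e. when $J_R(n)=0$ for all $R$, i.e. when $n$ is a steady state.

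Then I would combine these pieces. Existence and uniqueness of a positive steady state in $[n_0]_{\mathcal S}$ follows from strict convexity: $F$ restricted to the (nonempty, by the compactness argument) set $[n_0]_{\mathcal S}$ attains its minimum at a unique point $N$, and the first-order condition $\log(N/\overline N) \perp \mathcal S$ is precisely the condition $\langle R,\log(N/\overline N)\rangle = 0$ for all $R$, which by the computation above characterizes steady states; uniqueness of the positive steady state in the class is immediate from uniqueness of the minimizer. For global stability, $F(\cdot) - F(N)$ is a strict Lyapunov function for the dynamics restricted to $[n_0]_{\mathcal S}$: it is nonincreasing along trajectories, its sublevel sets are compact so trajectories stay bounded and the $\omega$-limit set is nonempty, and by LaSalle's invariance principle the $\omega$-limit set is contained in $\{n : \frac{d}{dt}F = 0\} = \{\text{steady states}\} \cap [n_0]_{\mathcal S} = \{N\}$; hence $n(t)\to N$. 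The main obstacle is the compactness/properness of the sublevel sets of $F$ within a compatibility class and the verification that trajectories cannot approach the boundary $\partial\mathbb R_+^N$ — this is the technical heart of the Horn–Jackson argument and is exactly where \cite{feinberg2019foundations} does real work; I would either reproduce that boundary estimate or cite Theorem 15.2.2 of \cite{feinberg2019foundations} for it, since the statement says standard results may be assumed.
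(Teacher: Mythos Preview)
Your proposal is correct and follows essentially the same approach as the paper: both use the free energy $F(n)=\sum n_j(\log(n_j e^{E(j)})-1)$ (your version differs only by the additive constant $\sum\overline N_i$) as a Lyapunov functional, compute the dissipation along trajectories via Lemma~\ref{lem:flux db}, and reduce nonpositivity of $\partial_t F$ to the elementary inequality $(y-1)\log y\ge 0$ (equivalently your $x(1-e^x)\le 0$ after the substitution $y=e^{x_R}$). The paper simply cites \cite{feinberg2019foundations} for existence and uniqueness of the positive steady state in each compatibility class and for the boundary/properness issues, whereas you sketch the strict-convexity minimization argument and invoke LaSalle explicitly; this is a minor elaboration rather than a different route.
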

The proof of Proposition \ref{prop:global stability db big} can be found in \cite{feinberg2019foundations}. 
We review the main ideas behind that proof. 
Assume that the kinetic system $(\Omega , \mathcal R, \mathcal K )$ satisfies the detailed balance property. 
Then the function $F: \mathbb R_*^N \rightarrow \mathbb R$ defined as 
\begin{equation} \label{entropy}
F(n):= \sum_{j \in \Omega } n_j  \left(\log\left( n_j e^{E(j) }\right) - 1 \right)
\end{equation}
where $E$ is an energy of $(\Omega, \mathcal R, \mathcal K ) $, is the total \textit{free energy} associated with the kinetic system $(\Omega, \mathcal R, \mathcal K)$. 
The \textit{dissipation} $\mathcal D$ of free energy is defined as
\begin{equation} \label{dissipation}
\mathcal D(n):= -  \partial_t F (n).
\end{equation}
It is easy to check, using also Lemma \ref{lem:flux db}, that if a kinetic system satisfies the detailed balance property then 
\begin{align*}
\mathcal D(n) &=- \sum_{j \in \Omega } \partial_t n_j \log( n_j e^{ E(j) })= 
\sum_{j \in \Omega } \sum_{R \in \mathcal R_s } R(j) J_R(n) \log( n_j e^{ E(j) }) =  \sum_{R \in \mathcal R_s }  J_R(n) \log\left( \prod_{j \in \Omega}  n_j^{R(j)} e^{R(j) E(j) }\right) 
\\
&=  \sum_{R \in \mathcal R_s }  K_R  \prod_{i \in I(R)} (n_i)^{- R(i) }  \left(  \prod_{j\in \Omega} n_j^{ R(j) } e^{ R(j)E(j) } -1  \right) \log\left( \prod_{j \in \Omega}  n_j^{R(j)} e^{R(j) E(j) }\right).
\end{align*} 
Notice that $\mathcal D(n) \geq 0 $ for every $n \in \mathbb R_*^N$ because the function $(x-1) \log(x) \geq 0$ for every $x \geq 0$. Hence kinetic systems that satisfy the detailed balance property have a non-increasing free energy. 
  Moreover, we have that $\mathcal D(n)=0$ if and only if $ n = e^{- E} $. As a consequence of Lemma \ref{lem:db special ss}, we have that the steady states are the only minimizers of the free energy.
  Since for each stoichiometric compatibility class it is possible to prove that there exists a unique steady state (see \cite[Proposition 13.A.1, Corollary 13.A.3] {feinberg2019foundations}), the statement of Proposition \ref{prop:global stability db big} follows by noting that $F$ is a radially unbounded Lyapunov functional. 

Notice that the statement of Proposition \ref{prop:global stability db big} can be reformulated as follows. If the kinetic system $(\Omega, \mathcal R, \mathcal K ) $ satisfies the detailed balance property, then we have that for every initial concentration $n_0 $ there exists a unique positive steady state $\overline N$ such that $m^T \overline  N = m^T n_0 $ for every $m \in \mathcal M $ and such that the solution to the system of ODEs \eqref{ODEs} is such that $\lim_{t \to \infty} n(t)=\overline N$.

We conclude this section by introducing the definition of closed kinetic system. A closed kinetic system is a kinetic systems that does not exchange substances with the environment, hence satisfies the detailed balance condition and is conservative and does not contain sources and sinks.
\begin{definition}
    A kinetic system $(\Omega, \mathcal R, \mathcal K )$ is closed if it satisfies the detailed balance property, it is conservative and 
    \begin{equation} \label{no sources/sinks}
   I(R) \neq \emptyset \ \text{ and } \  F(R) \neq \emptyset \text{ for every } R \in \mathcal R. 
    \end{equation}
\end{definition}

\section{Reduction of a kinetic system} \label{sec:reduction}

 In this section we deal with kinetic systems that exchange substances with the environment. In particular, the assumption that we make in this section is that the time scale at which the exchange of chemicals between the kinetic systems and the environment takes place is much shorter than the time scale at which the reactions in the kinetic system take place. Hence we take the limit as $\alpha \rightarrow \infty $ in \eqref{eq:intro const fluxes}. 
 This justifies the assumption that certain substances in the kinetic system have constant concentration values. 

The plan for this section is the following. 
In Section \ref{sec:reduction of chem netowrks} we give the definition of reduced chemical network. 
In Section \ref{sec:reduced cycles} we study the relation between the cycles of the non-reduced kinetic system and the cycles of the reduced kinetic system.  
Finally in Section \ref{sec:reduction of kinetic systems} we define the rate function of the reduced kinetic system. This rate function will depend on the values of the frozen concentrations in the non-reduced kinetic system 

\subsection{Reduction of a chemical network}\label{sec:reduction of chem netowrks}
In this section we explain how we define the reduction of a chemical network. Before doing that we specify what we mean with projection of a vector. Assume that $v \in \mathbb Z^N$ is a vector and that $A \subset \Omega  $. Then we define the vector $ \pi_A v \in \mathbb R^{|A|} $ as 
\[
\pi_A v :=\left(  v(i)\right)_{i \in A}=(v( \min(A)), \dots,  v(\max A)).  
\]

\begin{definition} \label{def:reduced chemical netowork}
Let $(\Omega , \mathcal R)$ be a chemical network and let $U \subset \Omega$ be such that $U\neq \emptyset$ and let $V:= \Omega \setminus U$.
Let $v = |V | = |\Omega  \setminus U  |$. Without loss of generality we assume that $U=\{ d+1, \dots ,N \} $ and $V = \{1, \dots, d  \}$.
The set of the \textit{$U$-reduced reactions} ${ \mathcal R}_{V}$ is defined as as
\[
{\mathcal R }_{V} := \{ R \in \mathbb Z^{ |V|} \setminus \{ \textbf{0}_d\}  : R= \pi_V \overline{ R}, \text{ where } \overline{ R } \in \mathcal R \}.
\]
Then the chemical network $(V, {\mathcal R}_{V })$ is the \textit{$U$-reduced chemical network}  associated with the chemical network $(\Omega, \mathcal R)$.
\end{definition}
We denote with $\textbf{R}_{V}$ the matrix of the reactions associated with the set of reactions $\mathcal R_{V}$. 
Notice that the set of reactions $\mathcal R_{V}$ can contain reactions $R$ such that $I(R)=\emptyset$ or such that $F(R)=\emptyset$. 
 \begin{remark}
     If $(\Omega , \mathcal R) $ is bidirectional, then also $(V, {\mathcal R}_{V} ) $ is bidirectional. 
 \end{remark}
We stress that the number of the reactions in $\mathcal R$ can be strictly larger than the number of the reduced reactions $\mathcal R_V$. This can take place due two different reasons. 
Indeed, two reactions $R_1, R_2 \in \mathcal R $ with $R_1 \neq R_2$ and $ R_1 \neq - R_2  $ can be projected to the same reaction, i.e. it can happen that $\pi_V R_1 = \pi_V R_2$. Another possibility, is that a reaction $R \in \mathcal R$ is projected to zero as $\pi_V R =0. $
We illustrate these two possibilities in the following example. 
 \begin{example} \label{ex:1cycle reduced}
Consider the following set of reactions
\[
(1) + (5) \leftrightarrows (2), \quad  (1) + (6) \leftrightarrows (2), \quad (2) \leftrightarrows (3),\quad  (1) \leftrightarrows  (4), \quad (4)+ (5) \leftrightarrows (3), \quad (4) + (6) \leftrightarrows  (3). 
\]
In this case $\Omega=\{1,2,3,4,5,6 \}$ and the matrix of the reactions is given by
\[
\textbf{R}= 
\left( 
\begin{matrix}
&-1 & -1 & 0 & -1 & 0 & 0 \\
&1 & 1 & -1 & 0 & 0 & 0 \\
&0 & 0 & 1 & 0 & -1 & -1 \\
&0 &0 &0 &1 &1 &1 \\ 
&-1 &0 &0 &0 &0 &1 \\
&0 &-1 & 0& 0 &1 &0 
\end{matrix} \right). 
\]
Assume that $U:=\{ 5,6 \}$. Notice that $\pi_V R_1 =\pi_V \textbf{R} e_1  = \pi_V R_2 =\pi_V \textbf{R} e_2  $ as well as  $\pi_V R_5 =\pi_V \textbf{R} e_5 = \pi_V R_6 =\pi_V \textbf{R} e_6  $. 
The reduced chemical network is $(V, \mathcal R_V )$ where $V:=\{ 1,2,3,4 \}$ and the matrix associated to this set of reactions as in \eqref{matrix reactions} is given by 
\[ 
\textbf{R}_V:=\left( \begin{matrix}
 & -1 & 0 & -1 & 0  \\
 & 1 & -1 & 0 & 0 \\
 & 0 & 1 & 0 & -1  \\
 &0 &0 &1 &1 
\end{matrix} \right). 
\]
Consider instead $\overline U=\{ 1, 4\} $ in this case we have that $ \pi_{\overline V} R_4 = \pi_{\overline V }\textbf{R} e_4 =0$, hence the matrix of the reduced reactions is given by 
reactions is given by
\[
\textbf{R}_{\overline V}= 
\left( 
\begin{matrix}
&1 & 1 & -1  & 0 & 0 \\
&0 & 0 & 1 & -1 & -1 \\
&-1 &0 &0 &0&1 \\
&0 &-1 & 0&  1 &0 
\end{matrix} \right). 
\]
\end{example}
This example motivates the following definitions. 
 \begin{definition}[$1$-$1$-reduced reaction and reaction with zero-$V$-reduction] \label{def:11 and zero}
 Let $(\Omega, \mathcal R)$ be a chemical network and assume that $U$ and $V$ are as in Definition \ref{def:reduced chemical netowork}.  
 Let us consider the $U$-reduced chemical network $(V , \mathcal R_V )$ induced by the chemical network $(\Omega, \mathcal R)$. 
  We say that a reaction $R \in \mathcal R_V $ is a one-to-one reduced reaction if 
  \[
|\pi_V^{-1} (R) |=1 .
  \]
We say that a reaction $R \in \mathcal R$ has zero-$V$-reduction if $\pi_V R =0$. 
 \end{definition}
Notice that $ | \mathcal R_V |= | \mathcal R |$ if and only if every $ R \in \mathcal R $ is not a reaction that has a zero-$V$-reduction and every reaction in $\mathcal R_V $ is a $1$-$1$ reaction. Otherwise we have $|\mathcal R_V | < |\mathcal R|$. 

 Finally notice that the reduction described above decomposes the matrix of the reactions $\textbf{R} $ associated with $\mathcal R$ in two matrices $ \pi_U \textbf{R}\in \mathbb R^{(N-d) \times s}$ and $ \pi_V \textbf{R} \in \mathbb R^{d \times s }$ such that 
 \begin{equation} \label{two submatrices of reactions}
 \textbf{R}= \left(  \begin{matrix}
     \pi_V \textbf{R} \\
        \pi_U  \textbf{R}\\
 \end{matrix} \right). 
 \end{equation}
 Here the matrix $\pi_V \textbf{R}$ is the matrix wich has, as columns, the projections of the columns of $\textbf{R}$, i.e. the columns of $\pi_V \textbf{R}$ are of the form $\pi_V (\textbf{R} e_i)$.  
 We stress here that the matrix $\pi_V \textbf{R} $ is, in general, different from the matrix of the reduced reactions  $\textbf{R}_V$. 
 If every reaction in the reduced network is one-to-one and every reaction in the non reduced network does not have zero-$V$-reduction, then we have that the matrix $\pi_V \textbf{R} = \textbf{R}_V$.

\subsection{Reduction of the cycles} \label{sec:reduced cycles}
In the following sections we will provide precise conditions that guarantee that if the kinetic system $(\Omega, \mathcal R, \mathcal K ) $ satisfies the detailed balance property, then also the reduced kinetic system satisfies the detailed balance property. 
These conditions are topological conditions on the chemical networks $(\Omega, \mathcal R) $ and on the reduced chemical network $(V, \mathcal R_{V} )$. 
More precisely, these are conditions on the cycles $\mathcal C$ of the chemical system $(\Omega, \mathcal R) $ and on the the cycles $\mathcal C_{V}$ of the reduced chemical network. 
This is the reason why in this section we study in detail the relation between $\mathcal C$ and $ \mathcal C_V$.

\begin{figure}[H] 
\centering
\includegraphics[width=0.7\linewidth]{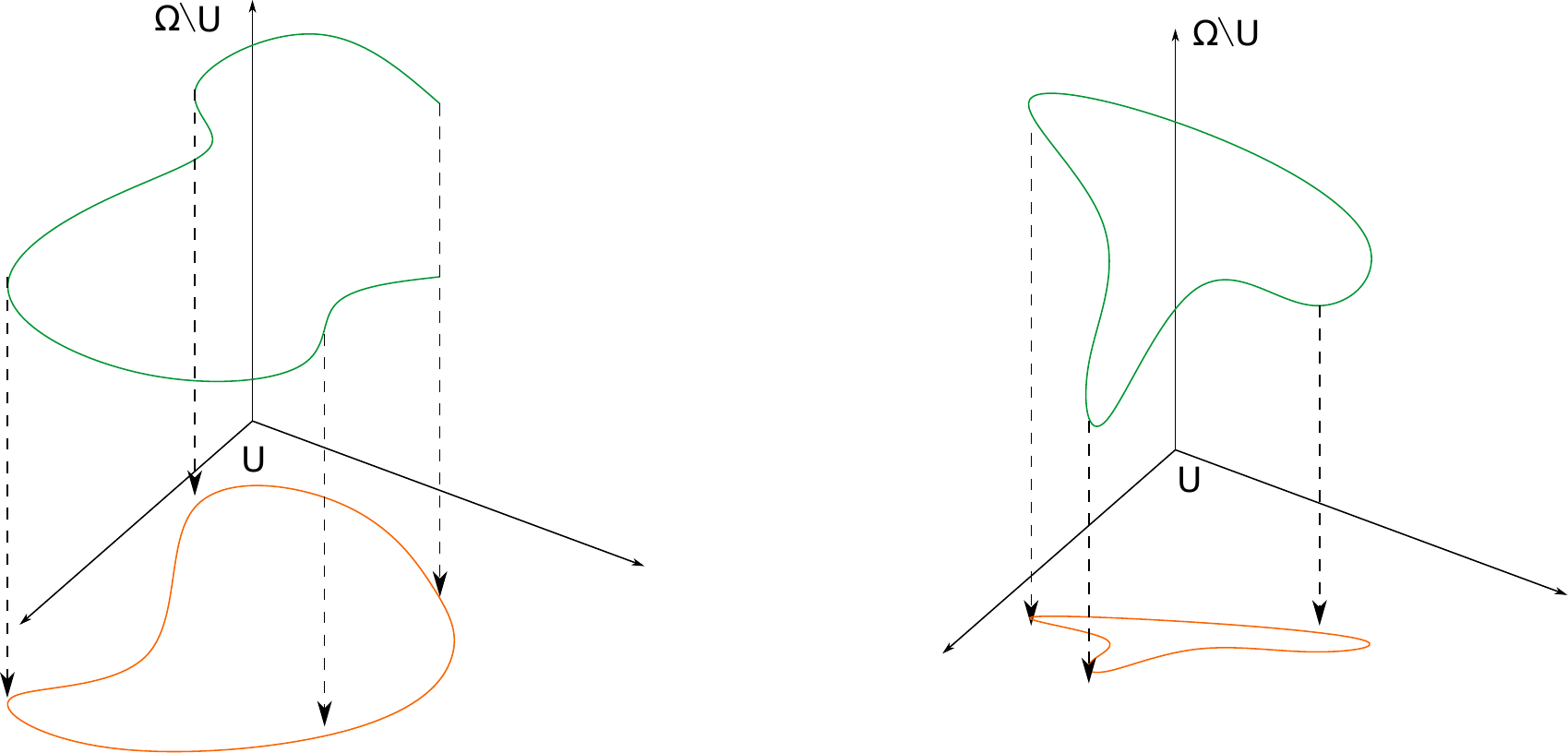}
\caption{
On the left we have a cycle in the reduced chemical network $ (U, \mathcal R_V)$ that is not a cycle in the non-reduced chemical network $ (\Omega , \mathcal R)$.  In the right we have a cycle in the reduced system that is also a cycle in the non reduced system. 
}\label{fig1}
\end{figure}

First of all we notice that if $|\mathcal R|= |\mathcal R_V |$ then we can compare directly the cycles and we have the following result. Notice that in this case we have that $\pi_V \textbf{R} =\textbf{R}_V$.
The following lemma states that every cycle of the reduced chemical network is a cycle also in the non reduced chemical network, see \eqref{ineq cylces}. 
Moreover, in the following lemma we state conditions on the matrix $\textbf{R}$ that are necessary and sufficient to have that the cycles of the reduced system and the cycles of the non reduced system are the same. 
\begin{lemma} \label{lem:cycles reduced 11}
Let $(\Omega, \mathcal R) $ be a chemical network with space of cycles $\mathcal C$. Let $U $ and $V$ be as in Definition \ref{def:reduced chemical netowork} and let $(V, {\mathcal R}_{V})$ be the $U$-reduced network.
Let $\mathcal C $ be the space of the cycles of the network $(\Omega, \mathcal R)$ and let $\mathcal C_{V}$ be the space of cycles of the $U$-reduced network $(V, {\mathcal R}_{V})$. 
Assume that $ |\mathcal R_V | = |\mathcal R|$.
Then
\begin{equation}\label{ineq cylces}
\mathcal C \subset {\mathcal C}_{V}. 
\end{equation}
Moreover, we have that $\mathcal C = {\mathcal C }_{V } $ if and only if $\ker(\textbf{R}_V)\subset \ker(\pi_U \textbf{R} ) $, where $ \pi_U \textbf{R} \in \mathbb R^{(L-d) \times s }$ is given by \eqref{two submatrices of reactions}.
\end{lemma}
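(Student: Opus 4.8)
When $|\mathcal R_V| = |\mathcal R|$, we have (as noted just before the lemma) that $\pi_V\textbf{R} = \textbf{R}_V$, and the decomposition \eqref{two submatrices of reactions} reads
\[
\textbf{R} = \begin{pmatrix} \textbf{R}_V \\ \pi_U\textbf{R} \end{pmatrix}.
\]
Here $\mathcal C = \ker(\textbf{R})$ and $\mathcal C_V = \ker(\textbf{R}_V)$, both viewed as subspaces of $\mathbb R^{|\mathcal R_s|}$ (the hypothesis $|\mathcal R_V| = |\mathcal R|$ guarantees the index sets for non-reversed reactions match up, so this identification makes sense). The plan is to read off everything from this block structure.

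\textbf{Proof of \eqref{ineq cylces}.} Let $c \in \mathcal C = \ker(\textbf{R})$. Then $\textbf{R}c = \textbf{0}$, and looking at the top block of the product $\textbf{R}c = \begin{pmatrix}\textbf{R}_V c\\ \pi_U\textbf{R}\, c\end{pmatrix}$, we get in particular $\textbf{R}_V c = \textbf{0}$, i.e. $c \in \ker(\textbf{R}_V) = \mathcal C_V$. Hence $\mathcal C \subset \mathcal C_V$. (Intuitively: if a sequence of reactions returns the full composition $\textbf{0}_N$ to itself, then a fortiori it returns the projected composition to itself.)

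\textbf{Proof of the equivalence.} From the block structure, $c \in \ker(\textbf{R})$ iff $\textbf{R}_V c = \textbf{0}$ \emph{and} $\pi_U\textbf{R}\, c = \textbf{0}$, i.e.
\[
\ker(\textbf{R}) = \ker(\textbf{R}_V) \cap \ker(\pi_U\textbf{R}).
\]
So $\mathcal C = \mathcal C_V$ iff $\ker(\textbf{R}_V) \subset \ker(\textbf{R}_V)\cap\ker(\pi_U\textbf{R})$, which (using $\mathcal C\subset\mathcal C_V$ already proved, or directly) holds iff $\ker(\textbf{R}_V) \subset \ker(\pi_U\textbf{R})$. This is exactly the claimed condition.

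\textbf{Remarks on difficulty.} There is essentially no obstacle here: the whole lemma is just the observation that the kernel of a stacked matrix is the intersection of the kernels of its blocks, once one has unwound the bookkeeping about $\mathcal R_s$, $\textbf{R}$, and $\textbf{R}_V$. The one point worth stating carefully is \emph{why} $|\mathcal R_V| = |\mathcal R|$ lets us treat $\mathcal C$ and $\mathcal C_V$ as living in the same space $\mathbb R^{|\mathcal R_s|}$ and lets us write $\pi_V\textbf{R} = \textbf{R}_V$: under this hypothesis no reaction has zero-$V$-reduction and every reduced reaction is one-to-one (Definition \ref{def:11 and zero} and the paragraph after it), so the projection $\pi_V$ induces a bijection $\mathcal R_s \to (\mathcal R_V)_s$, compatible with the column indexing of the two reaction matrices. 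Everything else is the two-line linear algebra above; I would not belabor it.
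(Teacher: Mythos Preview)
Your proof is correct and follows essentially the same approach as the paper's: both arguments boil down to the observation that, under the hypothesis $|\mathcal R_V|=|\mathcal R|$ (so $\pi_V\textbf{R}=\textbf{R}_V$), the block decomposition $\textbf{R}=\begin{pmatrix}\textbf{R}_V\\ \pi_U\textbf{R}\end{pmatrix}$ gives $\ker(\textbf{R})=\ker(\textbf{R}_V)\cap\ker(\pi_U\textbf{R})$, from which both the inclusion and the equivalence are immediate. The paper writes this out coordinate-wise while you use the block matrix/kernel language directly, but the content is identical.
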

\begin{proof}
First of all we prove that $ \mathcal C \subset {\mathcal C}_{V }$. 
    Let $c \in \mathcal C $. By definition we have that $\sum_{i=1}^{s} c(i) R_i =0$ where $\mathcal R_i =\textbf{R} e_i$ and where $s =|\mathcal R_s|$ where we recall that set $\mathcal R_s$ is defined as \eqref{non reversed set}.
By the definition of $\pi_V $ this implies that $ \sum_{i=1}^{s} c(i) \pi_V R_i=   0$. Hence \eqref{ineq cylces} follows. 

Assume now that ${\mathcal C }_{V }\subset \mathcal C $. Hence, by the definition of $\pi_U \textbf{R} $, it follows that $\pi_U \textbf{R}  c =0$ for every $c \in \mathcal C_{V } = \ker (\pi_V \textbf{R} )=  \ker( \textbf{R}_V )$. Hence $\ker(\textbf{R}_V  )\subset \ker(\pi_U \textbf{R} ) $. 
Assume instead that $ \mathcal C_{V } = \ker( \textbf{R}_V )\subset \ker(\pi_U \textbf{R} ) $. Hence for every $c \in \mathcal C_{V } $ we have that $\pi_U \textbf{R}  c =0$. As a consequence we have that $c \in \mathcal C$. 
\end{proof}

We have a similar statement when $|\mathcal R | > |\mathcal R_V|$. 
To formulate this statement it is convenient to introduce the definition of a suitable projection of cycles of the non reduced chemical network $(\Omega , \mathcal R) $ to the reduced chemical network $(V, \mathcal R_V) $. 
To this end notice that if $c \in \mathcal C $, then it holds that
\[ 
\pi_V\left( \sum_{i=1}^s R_i  c(i)\right) =\sum_{i=1}^s (\pi_V R_i) c(i) =0, 
\]
where $R_i = \textbf{R} e_i $ and $s=|\mathcal R_s|$.
Moreover,  
\[
\sum_{i=1}^s (\pi_V R_i ) c(i) = \sum_{j=1 }^v \overline R_j  \sum_{ \{ i: \overline R_j  =  \pi_V R_i \} } c(i)  
\]
where $\overline R_j=\textbf{R}_V e_i$ and $v$ is the number of reactions in $(\mathcal R_V)_s$ where the set $(\mathcal R_V)_s$ is defined as in \eqref{non reversed set} with respect to $\mathcal R_V$. 
This implies that the vector $p[c] \in  \mathbb R^{v}$ defined as
\begin{equation} \label{projection of cycles}
p[c](j):= \sum_{ \{ i: \overline R_j  =  \pi_V R_i \} }   c(i), \quad  j \in \{ 1, \dots, v\} 
\end{equation} 
is such that
$\textbf{R}_V p[c]=0$.
Therefore  $p[c] \in \mathcal C_V $ is the projection of the cycle $c \in \mathcal C$. 
To clarify the above definition we make the following example.
\begin{example} \label{ex:2cycle reduced}
Recall the chemical network $(\Omega , \mathcal R) $ of Example \ref{ex:1cycle reduced}. 
Notice that $\textbf{R} (1,1,2,-2,1,1) =0$.
Consider the reduction with respect to $U:=\{5,6\} $.
We now apply the projection map to the cycle $c=(1,1,2,-2,1,1)$ and obtain that $p[c]=(2,2,-2,2)$. 
Notice that this is a cycle of the reduced system. Indeed 
\[
\textbf{R}_V p[(1,1,2,-2,1,1)]= \left( \begin{matrix}
  -1 & 0 & -1 & 0  \\
  1 & -1 & 0 & 0 \\
  0 & 1 & 0 & -1  \\
 0 &0 &1 &1 
\end{matrix} \right)  \left( \begin{matrix} 2\\ 2 \\ -2 \\ 2  \end{matrix} \right) =0.
\]
\end{example}
We are now ready to state the  following Lemma \ref{lem:cycles reduced}, which is the analogous of Lemma \ref{lem:cycles reduced 11} when we have that $|\mathcal R |> |\mathcal R_V |$. 
\begin{lemma}\label{lem:cycles reduced}
Let $(\Omega, \mathcal R) $ be a chemical network with space of cycles $\mathcal C$. Let $U$ and $V$ be as in Definition \ref{def:reduced chemical netowork} and let $(V, {\mathcal R}_{V})$ be the corresponding $U$-reduced network. Let $\mathcal C_{V}$ be the space of cycles of $(V, {\mathcal R}_{V})$.
Then $p (\mathcal C):=\{ p[c] :c \in \mathcal C \}  = {\mathcal C }_{V } $ if $\ker( \pi_V\textbf{R} )\subset \ker(\pi_U \textbf{R}) $, where $ \pi_V\textbf{R} \in \mathbb R^{d \times s }$ and $ \pi_U \textbf{R} \in \mathbb R^{(N-d) \times s }$ are defined as in \eqref{two submatrices of reactions}.
\end{lemma}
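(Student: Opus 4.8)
The plan is to establish the two inclusions $p(\mathcal C)\subset\mathcal C_V$ and $\mathcal C_V\subset p(\mathcal C)$ separately. The first one holds with no hypothesis on $\textbf{R}$ and is essentially the computation carried out just before the statement: if $c\in\mathcal C=\ker(\textbf{R})$ then in particular $\pi_V\textbf{R}\,c=0$, and regrouping the columns $\pi_V R_i$ according to the reduced reaction onto which they project turns this into $\textbf{R}_V\,p[c]=\pi_V\textbf{R}\,c=0$, i.e. $p[c]\in\ker(\textbf{R}_V)=\mathcal C_V$. Hence the whole content of the lemma is the reverse inclusion, and this is where the assumption $\ker(\pi_V\textbf{R})\subset\ker(\pi_U\textbf{R})$ enters.

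For the reverse inclusion I would argue by exhibiting an explicit lift. Fix $d\in\mathcal C_V=\ker(\textbf{R}_V)$. Each column $\overline R_j=\textbf{R}_V e_j$ of $\textbf{R}_V$ lies in $\mathcal R_V$, so by Definition \ref{def:reduced chemical netowork} it is the $\pi_V$-projection of some reaction of the network; orienting the non-reversed sets $\mathcal R_s$ and $(\mathcal R_V)_s$ compatibly, we may take this reaction to be some $R_{i(j)}\in\mathcal R_s$ with $\pi_V R_{i(j)}=\overline R_j$, and since the $\overline R_j$ are pairwise distinct the indices $i(j)$ can be chosen pairwise distinct. Define $\hat d\in\mathbb R^s$ by $\hat d(i(j)):=d(j)$ for $j\in\{1,\dots,v\}$ and $\hat d(i):=0$ for every other index $i$; in particular $\hat d$ vanishes on all reactions with zero-$V$-reduction. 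Reading off \eqref{projection of cycles} one gets $p[\hat d](j)=\hat d(i(j))=d(j)$, so $p[\hat d]=d$.

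It remains to check that $\hat d$ is a cycle of the full network. On the one hand $\pi_V\textbf{R}\,\hat d=\sum_{j=1}^{v}\hat d(i(j))\,\pi_V R_{i(j)}=\sum_{j=1}^{v}d(j)\,\overline R_j=\textbf{R}_V\,d=0$, because $d\in\ker(\textbf{R}_V)$, so $\hat d\in\ker(\pi_V\textbf{R})$; on the other hand the hypothesis $\ker(\pi_V\textbf{R})\subset\ker(\pi_U\textbf{R})$ then forces $\pi_U\textbf{R}\,\hat d=0$. By the block decomposition \eqref{two submatrices of reactions} these two identities together say exactly that $\textbf{R}\,\hat d=0$, i.e. $\hat d\in\ker(\textbf{R})=\mathcal C$, and since $p[\hat d]=d$ we conclude $d\in p(\mathcal C)$. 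The only genuinely delicate point in this scheme is the construction of the lift: one has to be sure that a representative $R_{i(j)}$ with $\pi_V R_{i(j)}=\overline R_j$ can be found already inside $\mathcal R_s$, rather than only among reactions whose projection is the reversed reaction $-\overline R_j$, which comes down to choosing the orientations of $\mathcal R_s$ and $(\mathcal R_V)_s$ coherently. Once this is arranged the remainder is routine linear algebra, and the inclusion $\ker(\pi_V\textbf{R})\subset\ker(\pi_U\textbf{R})$ is precisely what says that any vector which is a ``cycle in the $V$-coordinates alone'' is automatically a cycle of the entire network — which is exactly what makes $p$ surject onto $\mathcal C_V$.
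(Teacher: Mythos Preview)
Your argument is correct and follows essentially the same route as the paper: both inclusions are handled identically, and for the nontrivial one you lift a reduced cycle $d\in\mathcal C_V$ to a vector $\hat d$ supported on one chosen preimage per reduced reaction, then use $\ker(\pi_V\textbf{R})\subset\ker(\pi_U\textbf{R})$ together with the block decomposition \eqref{two submatrices of reactions} to conclude $\hat d\in\mathcal C$. The paper's proof is terser---it hides your explicit choice of indices $i(j)$ behind the phrase ``upon reordering the reactions''---but the mechanism is the same; your version is in fact more careful in flagging the orientation issue between $\mathcal R_s$ and $(\mathcal R_V)_s$, which the paper leaves implicit.
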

\begin{proof}
    The fact that $p(\mathcal C) \subset \mathcal C_{V} $ follows by the definition of the map $p$. 
    We consider $c \in \mathcal C_{V } $ and assume that $ \ker( \pi_V \textbf{R}) \subset \ker( \pi_U \textbf{R}) $. Let $\tilde{c} \in \mathbb R^{|\mathcal R|}$ be such that $\tilde{c}(i) = c(i) $ for every $i \in \{ 1, \dots, |\mathcal R_V|\}$  and otherwise set $\tilde{c} (i)=0$. Notice that, upon reordering the reactions, by construction we have that $ \pi_V\textbf{R} \tilde{c} =0$. Hence $\textbf{R} \tilde{c} =0$ and $\tilde{c} \in \mathcal C $. Notice also that $c=p[\tilde{c}]$ hence the desired conclusion follows.
\end{proof}

We conclude this section by analysing the relation between the mass conservation laws of the chemical network $(\Omega, \mathcal R)$ and of the $U$-reduced chemical network $(V , \mathcal R_{V})$. 
\begin{lemma}
    Let $(\Omega, \mathcal R) $ be a chemical network with space of conservation laws $\mathcal M$. Let $U$ and $V $ be as in Definition \ref{def:reduced chemical netowork} and let $(V, {\mathcal R}_{V})$ be the corresponding $U$-reduced network. Let $\mathcal M_{V}$ be the space of conservation laws of the $U$-reduced network $(V, {\mathcal R}_{V})$. Then every $m \in \mathcal M_{V}$ is such that
\begin{equation}\label{ineq conserved}
\left(\begin{matrix}
    m \\
    \textbf{0}_{N-d} 
\end{matrix}\right)
\in \mathcal M.
\end{equation}
\end{lemma}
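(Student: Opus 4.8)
The plan is to simply unwind the definitions $\mathcal M_V = \mathcal S_V^\perp$ and $\mathcal M = \mathcal S^\perp$ and check the orthogonality condition against the reactions, which generate the stoichiometric subspaces. Concretely, fix $m \in \mathcal M_V$, so that $m^T R = 0$ for every $R \in \mathcal R_V$. I want to show that the padded vector $\tilde m := \left(\begin{matrix} m \\ \mathbf 0_{N-d}\end{matrix}\right) \in \mathbb R^N$ satisfies $\tilde m^T \overline R = 0$ for every $\overline R \in \mathcal R$; since $\mathcal S = \operatorname{span}\{\overline R : \overline R \in \mathcal R\}$ this gives $\tilde m \in \mathcal S^\perp = \mathcal M$, which is exactly \eqref{ineq conserved}.

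The one point that needs a little care is that $\mathcal R_V$, by Definition \ref{def:reduced chemical netowork}, consists of the \emph{nonzero} projections $\pi_V \overline R$. So I would split into two cases. If $\overline R \in \mathcal R$ has zero-$V$-reduction, i.e. $\pi_V \overline R = \mathbf 0_d$, then $m^T \pi_V \overline R = 0$ trivially. Otherwise $\pi_V \overline R \in \mathcal R_V$ and $m^T \pi_V \overline R = 0$ by the assumption $m \in \mathcal M_V$. Hence in all cases $m^T \pi_V \overline R = 0$ for every $\overline R \in \mathcal R$.

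Now I invoke the block decomposition \eqref{two submatrices of reactions}: with the ordering convention $V = \{1,\dots,d\}$, $U = \{d+1,\dots,N\}$, each reaction vector reads $\overline R = \left(\begin{matrix} \pi_V \overline R \\ \pi_U \overline R \end{matrix}\right)$, so
\[
\tilde m^T \overline R = m^T \pi_V \overline R + \mathbf 0_{N-d}^T \pi_U \overline R = m^T \pi_V \overline R = 0 .
\]
This holds for every $\overline R \in \mathcal R$, so $\tilde m \in \mathcal M$, completing the proof. There is essentially no obstacle here: the whole statement is a bookkeeping consequence of the definitions, and the only thing to be attentive to is the case distinction created by reactions with zero-$V$-reduction (which are excluded from $\mathcal R_V$ but still need to be handled when verifying membership in $\mathcal S^\perp$), together with keeping the index ordering consistent with \eqref{two submatrices of reactions}.
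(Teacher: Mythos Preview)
Your proof is correct; the paper states this lemma without proof, presumably because it is immediate from the definitions as you show. Your handling of the zero-$V$-reduction case is the only subtlety and you deal with it properly.
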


\subsection{Reduction of a kinetic system} \label{sec:reduction of kinetic systems}
In the previous section we explained how to reduce a chemical network $(\Omega , \mathcal R) $ to the network $(V , \mathcal R_V) $ where $V$ is as in Definition \ref{def:reduced chemical netowork}. 
We now explain how to construct the corresponding reduced kinetic system. 

Let $(\Omega, \mathcal R, \mathcal K ) $ be a kinetic system. Let $U, V$ be as in Definition \ref{def:reduced chemical netowork}. 
Let $n_U=(n_i)_{i \in U} $ be given. 
Consider the $U$-reduced kinetic system $(V, \mathcal R_V) $. We associate to this network the rate reaction function ${\mathcal K }[n_U]: {\mathcal R}_{V} \rightarrow \mathbb R $ where 
\begin{equation} \label{reduced rates}
{\mathcal K }[n_U](R) = K_R[n_U] =
 \sum_{ \{ \overline R = \pi_V^{-1} (R) \} } \hat{K}_{\overline R}
\end{equation}   
where 
\begin{equation} \label{Khat}
  \hat{K}_{\overline R } :=  K_{\overline R} \prod_{s \in U \cap I({\overline R}) } n_s^{-{\overline R}(s)}. 
\end{equation}
The kinetic system $(V, \mathcal R_{V}, {\mathcal K }[n_U])$ is the reduced kinetic system associated with the kinetic system $(\Omega , \mathcal R,  \mathcal K)$ where the concentrations of the substances in $U$ are constant and equal to $n_{  U }$. 

\begin{remark}
Assume that $(V, \mathcal R_{V}, {\mathcal K }[n_U])$ is the reduced kinetic system associated with the kinetic system $(\Omega , \mathcal R,  \mathcal K)$. 
If $\pi_V R\in\mathcal R_V $ is a $1$-$1$ reduction, then the identity \eqref{reduced rates} implies that 
\begin{equation} \label{reduced rates 11}
\frac{K_{- \pi_V R}[n_U]}{K_{\pi_V R}[n_U]}  = \frac{K_{- R}}{K_R } \prod_{s \in U} n_s^{{R}(s)} . 
\end{equation}
\end{remark}

\section{The detailed balance property of reduced kinetic systems} \label{sec:DB in the reduction}
The aim of this section is to study when a reduced kinetic system inherits the detailed balance property of the non reduced kinetic system $(\Omega , \mathcal R , \mathcal K )$.
In Section \ref{sec:detailed balance reduced equilibrium} we prove that if the frozen concentrations that appear in the cycles of the non reduced kinetic system are chosen at equilibrium values, then the reduced kinetic system satisfies the detailed balance property. 
In Section \ref{sec:detailed balance reduced} instead we prove that the reduction of a kinetic system that satisfies the detailed balance property "in general" does not satisfy the detailed balance property. Indeed we prove that it satisfies the detailed balance property in a robust manner if and only if either the frozen concentrations are chosen at equilibrium values, or the cycles in the non reduced and in the reduced system satisfy some specific properties that will be stated in detail later. 
In this paper, when we say that a property is robust we mean that the property remains true under small changes of the chemical rates or of the conserved quantities. 
The results obtained in Section \ref{sec:detailed balance reduced} are consistent with the fact that effective biochemical systems often do not satisfy the detailed balance property, as they are obtained by a reduction of a closed kinetic systems in which the concentration of certain substances is assumed to be constant in time.

\subsection{Reduced kinetic system with some concentrations fixed at equilibrium values} \label{sec:detailed balance reduced equilibrium}
In this section we provide sufficient conditions on $n_U $ that guarantee that if the kinetic system $(\Omega, \mathcal R, \mathcal K )$ satisfies the detailed balance property, then the reduced kinetic system $(V, \mathcal R_V, \mathcal K [n_U])$ satisfies the detailed balance condition. 
The main idea is that if the vector of the frozen concentrations $n_U $ is chosen at equilibrium concentrations, i.e. it is given by the projection on $U$ of a steady state $N $ of the system of ODEs \eqref{ODEs} induced by the kinetic system $(\Omega, \mathcal R, \mathcal K) $, then the reduced kinetic system $(V, \mathcal R_V, \mathcal K[n_U] ) $ satisfies the detailed balance property. 
However this statement can be made stronger, as it is sufficient to have that the concentration of the substances in $U$ are selected at equilibrium values if these substances appear in the cycles of the reduced system.

More precisely, we find that if every reaction that belongs to a cycle of the reduced kinetic system contains exactly the same substances as the corresponding non reduced reaction, i.e. if the substances in $U$ do not appear in the reactions  whose reduction belong to a cycle, then the detailed balance property of the reduced kinetic system follows directly by the detailed balance of the non reduced kinetic system for every value of frozen concentrations $n_U$.
 Instead, if we assume that some of the substances in the set $U$ appear in some of the reactions in $\mathcal R$ whose reduction belong to a cycle, then selecting the concentrations of these substances at equilibrium values guarantees that the reduced system satisfies the detailed balance property.  

In order to state these two results precisely it is useful to introduce the definition of the set $\mathcal D(\mathcal C_V)$, which is the set of the substances in $\Omega $ that belong to a reaction $R \in \mathcal R$ that, when projected to the reduced system, belongs to a cycle of the reduced system, i.e. belongs to $\mathcal C_V$. 
Hence, given a kinetic system $(\Omega, \mathcal R, \mathcal K)$ and its reduction  $(V, \mathcal R_V, \mathcal K[n_U])$ with space of cycles $\mathcal C_V $ we define the following set 
\begin{align}\label{eq:states in cycles}
    \mathcal D(\mathcal C_V ):=\{ i \in \Omega : \exists R \in \mathcal R \text{ s.t. } \pi_V R \in \mathcal C_V  \text{ and s.t. } R(i)\neq 0  \}.  
\end{align}
\begin{proposition} \label{prop: DB when equilibirum conc}
    Assume that the kinetic system $(\Omega, \mathcal R, \mathcal K ) $ satisfies the detailed balance property. Let $U \subset \Omega $ and $V:= \Omega \setminus U $. Consider the reduced kinetic system $(V , \mathcal R_{V} , \mathcal K [n_U] ) $ where $n_U:=\{ n_s  \}_{ s \in U } $ with $n_s>0$ for every $s \in U$. 
    \begin{enumerate}
        \item If $ U \cap \mathcal D(\mathcal C_V)= \emptyset $, then the kinetic system  $(V , \mathcal R_{V} , \mathcal K [n_U] ) $ satisfies the detailed balance property.
        \item If $ U \cap \mathcal D(\mathcal C_V) \neq \emptyset $ and
\begin{equation} \label{eq:conc at equilibirum}
n_s = e^{- E(s) } \quad  \forall s \in U \cap \mathcal D(\mathcal C_V)
\end{equation}
for an energy $E \in \mathbb R^N $ of the kinetic system $(\Omega, \mathcal R, \mathcal K )$. 
Then the kinetic system  $(V , \mathcal R_{V} , \mathcal K [n_U] ) $ satisfies the detailed balance property. 
    \end{enumerate}

\end{proposition}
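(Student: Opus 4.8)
The plan is to check the circuit condition (condition~3 of Lemma~\ref{lem:Db equivalence}) for the reduced kinetic system $(V,\mathcal R_V,\mathcal K[n_U])$, which is bidirectional with positive rates. Concretely, I will establish that for every reduced reaction $\overline R\in\mathcal R_V$ that belongs to a cycle of $(V,\mathcal R_V)$ one has the identity $\log\bigl(K_{-\overline R}[n_U]/K_{\overline R}[n_U]\bigr)=\langle\overline R,\pi_V E\rangle$. Granting this, if $c\in\mathcal C_V$ then only indices $j$ with $\overline R_j\in\mathcal C_V$ contribute to \eqref{eq:cycles and DB}, so summing the identity against $c$ and using $\textbf{R}_V c=\textbf{0}$ gives $\sum_j c(j)\log\bigl(K_{-\overline R_j}[n_U]/K_{\overline R_j}[n_U]\bigr)=\langle\textbf{R}_V c,\pi_V E\rangle=0$, which is exactly the circuit condition for the reduced system; by Lemma~\ref{lem:Db equivalence} this is the detailed balance property, and both statements will follow at once, case~1 being the subcase where $U\cap\mathcal D(\mathcal C_V)=\emptyset$.

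First I would fix the datum $E$: in case~2 it is the energy appearing in \eqref{eq:conc at equilibirum}, and in case~1 it is any energy of $(\Omega,\mathcal R,\mathcal K)$ (one exists by detailed balance). Setting $\overline N:=e^{-E}\in\mathbb R_+^N$, Lemma~\ref{lem:db special ss} together with \eqref{eq:DB} shows that the weights $\omega_R:=K_R\prod_{i\in I(R)}\overline N_i^{-R(i)}$ are symmetric, $\omega_R=\omega_{-R}>0$, and $K_R=\omega_R\prod_{i\in I(R)}\overline N_i^{R(i)}$ for every $R\in\mathcal R$. The crucial observation is then: if $\overline R\in\mathcal C_V$ and $R'\in\mathcal R$ is any $\pi_V$-preimage of $\overline R$, then $D(R')\subseteq\mathcal D(\mathcal C_V)$ by the definition \eqref{eq:states in cycles}, so every frozen substance occurring in $R'$ is at an equilibrium value: $n_s=\overline N_s$ for $s\in U\cap D(R')$. (In case~1 this set is empty, so no assumption on $n_U$ is used; in case~2 it is precisely \eqref{eq:conc at equilibirum}.) Substituting $n_s=\overline N_s$ into the definition \eqref{Khat} of $\hat K_{R'}$ and using the factorization of $K_{R'}$, the factors indexed by $U$ cancel and, since $I(R')\cap V=I(\overline R)$ and $R'$ restricts to $\overline R$ on $V$, one is left with $\hat K_{R'}=\omega_{R'}\prod_{i\in I(\overline R)}\overline N_i^{\overline R(i)}$.

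Next I would sum over preimages as in \eqref{reduced rates}. Since the monomial prefactor $\prod_{i\in I(\overline R)}\overline N_i^{\overline R(i)}$ does not depend on the preimage $R'$, it factors out, giving $K_{\overline R}[n_U]=\bigl(\prod_{i\in I(\overline R)}\overline N_i^{\overline R(i)}\bigr)\sum_{R'}\omega_{R'}$. By bidirectionality the $\pi_V$-preimages of $-\overline R$ are exactly the $-R'$, with $\omega_{-R'}=\omega_{R'}$ and $I(-\overline R)=F(\overline R)$, so the same computation yields $K_{-\overline R}[n_U]=\bigl(\prod_{i\in F(\overline R)}\overline N_i^{-\overline R(i)}\bigr)\sum_{R'}\omega_{R'}$ with the \emph{same} sum. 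Dividing, the sums cancel and
\[
\frac{K_{-\overline R}[n_U]}{K_{\overline R}[n_U]}=\prod_{i\in F(\overline R)}\overline N_i^{-\overline R(i)}\;\prod_{i\in I(\overline R)}\overline N_i^{-\overline R(i)}=\prod_{i\in D(\overline R)}\overline N_i^{-\overline R(i)}=e^{\langle\overline R,\pi_V E\rangle},
\]
using $\overline N=e^{-E}$ and that $\overline R$ vanishes outside $D(\overline R)\subseteq V$. This is the identity claimed in the first step.

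The only genuinely delicate point is the preceding one: by \eqref{reduced rates} the rate of a reduced reaction is a \emph{sum} of monomials, not a single monomial, so the energy of the original system cannot be transported to the reduction without extra input. The hypothesis that the frozen substances occurring in cycle reactions sit at equilibrium values is exactly what makes the monomial prefactor of each summand coincide, so that it can be pulled out and the ratio of sums collapses to the single monomial $\prod_{i\in D(\overline R)}\overline N_i^{-\overline R(i)}$. The remaining ingredients — the identity $I(R')\cap V=I(\overline R)$, the sign bookkeeping under $\overline R\mapsto-\overline R$, positivity of the reduced rates, and the vacuity of the hypothesis in case~1 — are routine.
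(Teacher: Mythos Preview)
Your argument is correct and is essentially the same as the paper's proof of part~2: both establish that for every reduced reaction $\overline R$ lying on a cycle one has $\log\bigl(K_{-\overline R}[n_U]/K_{\overline R}[n_U]\bigr)=\langle\overline R,\pi_V E\rangle$, and then conclude via $\textbf{R}_V c=\textbf 0$. Your use of the symmetric weights $\omega_R=K_R\prod_{i\in I(R)}\overline N_i^{-R(i)}$ is simply a tidy repackaging of the ratio manipulation the paper does directly. The one noteworthy difference is organizational: the paper proves part~1 by a separate topological argument (showing that $U\cap\mathcal D(\mathcal C_V)=\emptyset$ forces every cycle reaction to be $1$--$1$ and $p(\mathcal C)=\mathcal C_V$, so the rates on cycles are literally unmodified), whereas you absorb part~1 into part~2 by observing that the equilibrium hypothesis~\eqref{eq:conc at equilibirum} is vacuous when $U\cap\mathcal D(\mathcal C_V)=\emptyset$. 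Your unified treatment is more economical; the paper's separate argument for part~1 has the minor advantage of making explicit the structural fact that cycle reactions are then $1$--$1$, which is reused later in Theorem~\ref{thm:same cycles implies db}.
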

\begin{proof}
We start by proving 1. 
The fact that $U \cap \mathcal D(\mathcal C_V) = \emptyset $ implies that for every $R \in \mathcal R$ such that $\pi_V R \in \mathcal C_V  $ it holds that $\pi_U R =0$. This implies that every reaction in $\mathcal C_V$ is a $1$-$1$ reaction. 
Moreover, the fact that for every $R \in \mathcal R$ such that $\pi_V R \in \mathcal C_V $ it holds that $\pi_U R =0$ implies that $\ker( \pi_V \textbf{R} ) \subset \ker( \pi_U \textbf{R} ) $. Indeed, let $x \in   \ker( \pi_V \textbf{R})$. Then by the definition of the projection $p $ we have that $p[x] \in \mathcal C_V$. Since the reactions in $\mathcal C_V $ are $1$-$1$ we have that, upon reordering the reactions, $x =(p[x], \textbf{0}_{r-v} )$ where we are using the notation $r= |\mathcal R|$ and $v= |\mathcal R_V |$.
Notice that since $\mathcal C_V \cap U = \emptyset$, then 
\[
\pi_U \textbf{R} x = \sum_{i=1 }^{r/2} x(i) \pi_U \textbf{R} e_i =  \sum_{i=1 }^{v/2} p[x](i) \pi_U R_i =0. 
\]
Hence Lemma \ref{lem:cycles reduced} implies that $p(\mathcal C)=\mathcal C_V$. 
To conclude the proof notice that the fact that $U \cap \mathcal D(\mathcal C_V) = \emptyset $ implies the rates of the reactions in the cycles are not modified. As a consequence for every $c \in p(\mathcal C )= \mathcal C_V $ it holds that 
\[
  \sum_{j=1}^{v/2 } c(j) \mathcal E \left( R_j  \right)= \sum_{j=1}^{v/2} c(j) \mathcal E \left( \pi_V  R_j  \right) =0 
\]
where $v=|\mathcal R_V|$ and where we are using the fact that since the reactions in $\mathcal C_V $ are $1$-$1$ then $p[c](i)=c(i)$ for every $i \in \{1, \dots, v\}$. 

We now prove 2. 
Consider $\overline R \in \mathcal R_V$ such that $\overline R \in \mathcal C_V$ and there exists a $R \in \pi_V^{-1} (\overline R) $ with $D(R) \cap U \neq \emptyset$. Let us compute $\frac{\hat{K}_{-R}}{\hat{K}_{R}}$ where $\hat{K} $ is defined as in \eqref{Khat}. 
Using the fact that $n_U$ satisfies \eqref{eq:conc at equilibirum} with respect to a solution $E$ of \eqref{eq:energy} we deduce that
\begin{align*}
    \frac{\hat{K}_{-R}}{\hat{K}_{R}} = \frac{ K_{-R} \prod_{i \in U \cap F(R) } n_i^{R(i) }} { K_{R} \prod_{i \in U \cap I(R) } n_i^{-R(i) }} = \frac{ K_{-R}}{K_R}\prod_{i \in U  } n_i^{R(i) } = \frac{ K_{-R}}{K_R} \exp \left( - \sum_{i \in U  } E(i)R(i) \right).
\end{align*}

Therefore we have that for $\overline R \in \mathcal R_V $ it holds that
\begin{align*}
\mathcal E( \overline R ) &= \log \left( \frac{ \sum_{R \in \pi^{-1}_V (\overline R) } \hat{K}_{-R}}{\sum_{R \in \pi^{-1}_V (\overline R) } \hat{K}_{R}}\right) = \log \left( \frac{ \sum_{R \in \pi^{-1}_V (\overline R) } \hat{K}_{R} \exp \left(\mathcal E (R) - \sum_{i \in U }    E(i)R(i)  \right) }{\sum_{R \in \pi^{-1}_V (\overline R) } \hat{K}_{R}}\right) \\
& =\log \left( \frac{ \sum_{R \in \pi^{-1}_V (\overline R) } \hat{K}_{R} \exp \left( \sum_{i \in \Omega }    E(i)  R(i) - \sum_{i \in U }    E(i) R(i)  \right) }{\sum_{R \in \pi^{-1}_V (\overline R) } \hat{K}_{R}}\right) \\
&=  \sum_{i \in V }    E(i) \overline R(i). 
\end{align*}
In the third equality we used the detailed balance property of $(\Omega , \mathcal R, \mathcal K)$ and in the last equality the fact that $R(i)=\overline R(i)$ for every $\overline R \in  \pi_V^{-1} (R)$ and for every $i \in V$. 
Consider now a $c \in \mathcal C_V $, then since $\textbf{R}_V c =0$ we have that 
\[
\sum_{j=1}^{v/2} c(j) \mathcal E( \overline R_j ) = c^T \textbf{R}_V^T \pi_V E =0. 
\]
As a consequence the kinetic system $(V , \mathcal R_{V} , \mathcal K [n_U] ) $ satisfies the detailed balance condition. 
\end{proof}
Proposition \ref{prop: DB when equilibirum conc} has a
consequence for systems with fluxes in a unique substance $s \in \Omega$. Clearly the statement of Proposition \ref{prop: DB when equilibirum conc} holds in the case in which $|U|=1$. However when $U \cap \mathcal D(\mathcal C_{\Omega \setminus \{ s\} } ) \neq \emptyset $ and $\mathcal M \neq \{ 0\} $ it is possible to find for every value of $n_s >0$ an energy $E$ that is such that $n_s = e^{-E(s)}$.  This is the content of the following corollary. 
\begin{corollary} \label{cor:one frozen conc db}
Let $(\Omega, \mathcal R, \mathcal K ) $ be a kinetic system that satisfies the detailed balance property.
If $\mathcal M \neq \{ 0\} $, then there exists a $s \in \Omega $ such that the reduced system $(V, \mathcal R_{V }  ,\mathcal{K}[n_s]) $, with $V:= \Omega \setminus \{ s \} $, satisfies the detailed balance condition for any value of $n_s>0$. 
\end{corollary}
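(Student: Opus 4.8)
The plan is to derive the corollary directly from Proposition \ref{prop: DB when equilibirum conc}, applied with $U=\{s\}$ for a suitably chosen $s\in\Omega$, using the fact that a nontrivial conservation law gives freedom in the choice of the energy vector. Since $\mathcal M\neq\{0\}$, fix a nonzero $m\in\mathcal M$ and choose $s\in\Omega$ with $m(s)\neq 0$. Put $V:=\Omega\setminus\{s\}$ and consider the reduced kinetic system $(V,\mathcal R_V,\mathcal K[n_s])$.

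First I would dispose of the case $s\notin\mathcal D(\mathcal C_V)$, that is $U\cap\mathcal D(\mathcal C_V)=\emptyset$: here part 1 of Proposition \ref{prop: DB when equilibirum conc} gives immediately that the reduced system satisfies the detailed balance property for every $n_s>0$, and there is nothing further to prove.

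In the remaining case $s\in\mathcal D(\mathcal C_V)$, part 2 of the proposition requires that $n_s$ be an equilibrium value, i.e. $n_s=e^{-E(s)}$ for some energy $E$ induced by $(\Omega,\mathcal R,\mathcal K)$; since $U\cap\mathcal D(\mathcal C_V)=\{s\}$, condition \eqref{eq:conc at equilibirum} reduces to exactly this single scalar equality. To exhibit such an $E$ for an arbitrary prescribed $n_s>0$, I would exploit the non-uniqueness of the energy. The detailed balance property ensures that \eqref{eq:energy} has a solution $E_0$; moreover $\textbf{R}^T m=0$, because every column of $\textbf{R}$ is a reaction $R\in\mathcal R\subset\mathcal S$ while $m\in\mathcal M=\mathcal S^\perp$. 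Hence $E_\mu:=E_0+\mu m$ solves \eqref{eq:energy} for every $\mu\in\mathbb R$, so it is again an energy of the system. Since $m(s)\neq 0$, the map $\mu\mapsto E_\mu(s)=E_0(s)+\mu\,m(s)$ is a bijection of $\mathbb R$; choosing $\mu$ so that $E_\mu(s)=-\log n_s$ yields $n_s=e^{-E_\mu(s)}$, and part 2 of Proposition \ref{prop: DB when equilibirum conc} then delivers the detailed balance property of the reduced system.

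I do not expect a genuine obstacle: the two cases are exhaustive, the same $s$ (any coordinate on which a fixed nonzero conservation law does not vanish) works in both, and one need not decide in advance which case occurs. The only point to verify carefully is that the shifted vector $E_0+\mu m$ remains a solution of \eqref{eq:energy}, which is precisely the observation recorded in Section \ref{sec:energy} immediately after the additive-energy lemma.
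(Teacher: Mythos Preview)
Your proposal is correct and follows essentially the same approach as the paper: pick $s$ in the support of some nonzero $m\in\mathcal M$ and use the affine freedom $E\mapsto E+\mu m$ in the solutions of \eqref{eq:energy} to hit $E(s)=-\log n_s$, then invoke Proposition~\ref{prop: DB when equilibirum conc}. The only cosmetic differences are that the paper phrases the construction as a (superfluous) contradiction argument and does not split explicitly into the cases $s\in\mathcal D(\mathcal C_V)$ versus $s\notin\mathcal D(\mathcal C_V)$; your explicit case split is a harmless refinement.
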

\begin{proof}
 Since we assume that $\mathcal M \neq \{ 0\} $, then there exists a $s \in \Omega $ such that $m(s) \neq 0 $ for some $m \in \mathcal M $. 
Therefore, to conclude the proof we need to prove that, for any value of $n_s $, there exists a vector $E \in \mathbb R^{N}$ such that $\textbf{R}^T E = w $ and $ E(s)= -\log(n_s)  $. 
Assume by contradiction that $E(s) \neq  -\log(n_s)$ for every $E$ that satisfy $\textbf{R}^T E = w $.
Since there exists a $m \in \mathcal M $ such that $m(s)\neq 0 $ we can define the vector $v:= \frac{\log(n_s) + E(s)}{m(s)} m  $.  Define the vector $E^* := E- v $, then 
\[
R^T E^* = R^T E - R^T v =w \text{ and } E^*(s)= - \log(n_s). 
\]
This is a contradiction, therefore there exists a vector $E \in \mathbb R^{N}$ such that $\textbf{R}^T E = w $ and $ E(s)= -\log(n_s)  $. As a consequence the assumptions of Proposition \ref{prop: DB when equilibirum conc} hold, hence the system with fluxes $(\Omega , \mathcal R, \mathcal K , n_s) $ satisfies the detailed balance property. 
\end{proof}

\begin{remark}
    Notice that Proposition \ref{prop: DB when equilibirum conc}, combined with \ref{prop:global stability db big}, has in important consequence. Indeed if we assume that the set of the constant concentrations $n_U $ satisfy \eqref{eq:conc at equilibirum}, then the system of ODEs \eqref{ODEs} corresponding to the reduced kinetic system $(V, \mathcal R_V, \mathcal K [n_U]) $ has a unique steady state for every stoichiometric compatibility class. This steady state is globally stable. Notice that the compatibility class here is with respect to the stoichiometry $\mathcal S_V$. Similarly, Corollary \ref{cor:one frozen conc db} implies that under the assumption of the corollary we have that the reduced kinetic system $(V, \mathcal R_V, \mathcal K [n_s]) $ has a unique steady state for every compatibility class. This steady state is globally stable. 
\end{remark}

\subsection{Stability of the detailed balance property of the reduced system} \label{sec:detailed balance reduced}
In this section we study under which topological condition the reduced kinetic system $(V, \mathcal R_V, \mathcal K [n_U]) $ inherits the detailed balance property of the original kinetic system $(\Omega, \mathcal  R, \mathcal K) $ independently on the choice of the values of the frozen concentrations $n_U$. 

    \begin{theorem} \label{thm:same cycles implies db}
        Assume that the kinetic system $(\Omega, \mathcal R, \mathcal K ) $ satisfies the detailed balance property. Let $\mathcal C $ be the associated space of cycles.  
        Let $U $ and $V$ be as in Definition \ref{def:reduced chemical netowork} and let $(V, \mathcal R_{V}, {\mathcal K }[n_U]) $ be the $U$-reduced kinetic system $(V, \mathcal R_{V}, {\mathcal K }[n_U]) $. Let $\mathcal C_V $ be the space of the cycles associated with $(V, \mathcal R_{V}, {\mathcal K }[n_U]) $.
        Assume that 
        \begin{enumerate}
         
            \item 
        each reaction $R \in \mathcal C_V $ is $1$-$1$ (see Definition \ref{def:11 and zero}); 
        \item $ \mathcal C $ does not contain reactions with zero-$V$-reductions (see Definition \ref{def:11 and zero}); 
        \item 
$ p(   \mathcal C) = \mathcal C_{V}$, where we recall that the projection $p$ is defined as in \eqref{projection of cycles}. 
    \end{enumerate}
 Then $(V, \mathcal R_{V}, {\mathcal K } [n_U]  ) $ satisfies the detailed balance property for every $n_U = {( n_i )}_{ i \in U } $. 
    \end{theorem}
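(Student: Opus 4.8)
The plan is to verify the circuit condition of Lemma \ref{lem:Db equivalence}(3) for the reduced kinetic system $(V,\mathcal R_{V},\mathcal K[n_U])$: denoting by $\mathcal E_V$ the Gibbs free energy function of the reduced system and by $\overline R_j=\mathbf R_V e_j$ its reactions, it suffices to show $\sum_j c(j)\,\mathcal E_V(\overline R_j)=0$ (equivalently $\overline{\mathcal E}_V(c)=0$) for every $c\in\mathcal C_V$. Fix such a $c$. Every reaction in $\operatorname{supp}(c)$ lies in $\mathcal C_V$, so by hypothesis~1 each $\overline R_j$ with $c(j)\neq 0$ is $1$-$1$; write $R_{\sigma(j)}\in\mathcal R$ for its unique $\pi_V$-preimage. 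Formula \eqref{reduced rates 11} then yields, for every such $j$,
\[
\mathcal E_V(\overline R_j)=\mathcal E\!\left(R_{\sigma(j)}\right)+\sum_{s\in U}R_{\sigma(j)}(s)\log n_s .
\]

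Next I introduce the canonical lift $\tilde c\in\mathbb R^{r/2}$ of $c$, defined by $\tilde c(\sigma(j)):=c(j)$ for $j\in\operatorname{supp}(c)$ and $\tilde c(i):=0$ otherwise (well defined since $\sigma$ is injective). Substituting the previous identity and reindexing the sum over $\operatorname{supp}(c)$ by $\sigma$, one gets
\[
\sum_{j} c(j)\,\mathcal E_V(\overline R_j)=\sum_{i}\tilde c(i)\,\mathcal E(R_i)+\sum_{s\in U}(\log n_s)\sum_i \tilde c(i)R_i(s)=\overline{\mathcal E}(\tilde c)+\sum_{s\in U}(\log n_s)\,(\mathbf R\tilde c)(s).
\]
Hence the proof reduces to the single assertion $\tilde c\in\mathcal C=\ker\mathbf R$: granting it, the first term vanishes by Lemma \ref{lem:energy on cycles is zero} (which applies because $(\Omega,\mathcal R,\mathcal K)$ satisfies detailed balance) and the second vanishes because $\mathbf R\tilde c=0$, so the circuit condition holds and $(V,\mathcal R_V,\mathcal K[n_U])$ is in detailed balance.

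It remains to show $\tilde c\in\mathcal C$. Half of it is automatic: $\pi_V\mathbf R\tilde c=\sum_{j\in\operatorname{supp}(c)} c(j)\,\pi_V R_{\sigma(j)}=\sum_j c(j)\,\overline R_j=\mathbf R_V c=0$, so $\tilde c\in\ker(\pi_V\mathbf R)$, and directly from the definition of $\tilde c$ one has $p[\tilde c]=c$. By hypothesis~3 we may also write $c=p[\tilde c\,']$ for some genuine cycle $\tilde c\,'\in\mathcal C$, and the goal becomes $\tilde c=\tilde c\,'$. This is where hypotheses~1 and~2 enter: if $\tilde c\,'(i)\neq 0$ then $R_i\in\mathcal C$, hypothesis~2 forbids zero-$V$-reductions in $\mathcal C$ so $\pi_V R_i\neq 0$, and one argues that in fact $\pi_V R_i\in\mathcal C_V$ — hypothesis~1 then makes $\pi_V R_i$ a $1$-$1$ reduced reaction with $R_i$ as unique preimage, whence $c$ has a nonzero component at $\pi_V R_i$ and $\tilde c(i)=\tilde c\,'(i)$. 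The reverse inclusion of supports is immediate: if $\tilde c(i)\neq 0$ then $i=\sigma(j)$ for some $j\in\operatorname{supp}(c)$, $\overline R_j$ is $1$-$1$ with preimage $R_i$, and $c(j)=p[\tilde c\,'](j)=\tilde c\,'(i)\neq 0$. Thus $\tilde c=\tilde c\,'\in\mathcal C$, completing the proof.

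I expect the main obstacle to be exactly the step "$R_i\in\mathcal C\ \Rightarrow\ \pi_V R_i\in\mathcal C_V$", i.e. that $\pi_V$ carries reactions occurring in cycles of $(\Omega,\mathcal R)$ to reactions occurring in cycles of $(V,\mathcal R_V)$: this is the place where all three hypotheses must be used together — hypothesis~2 ensures a cycle reaction does not disappear under $\pi_V$, hypothesis~1 ensures that two distinct reactions of a cycle cannot collapse onto a single reduced reaction with cancellation (so that $p$ and the lift $\,\widetilde{\ }\,$ are mutually inverse on the relevant supports), and hypothesis~3 supplies, for each reduced cycle, an honest upstairs cycle to lift it to. A convenient way to package this is to prove first that, under hypotheses~1 and~2, $\pi_V$ restricts to a bijection between the reactions appearing in $\mathcal C$ and those appearing in $\mathcal C_V$, which then intertwines the cycle spaces through $p$; alternatively one may try to verify the sufficient condition $\ker(\pi_V\mathbf R)\subset\ker(\pi_U\mathbf R)$ of Lemma \ref{lem:cycles reduced} restricted to the span of the lifts $\tilde c$.
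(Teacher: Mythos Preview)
Your strategy coincides with the paper's. Both reduce the circuit condition for an arbitrary $c\in\mathcal C_V$ to the assertion that its canonical lift lies in $\mathcal C$: you write this as $\tilde c\in\mathcal C$, while the paper, after picking $\overline c\in\mathcal C$ with $p(\overline c)=c$ via hypothesis~3, phrases the same claim as ``the cycle $\overline c$ has the form $\overline c=(\mathbf 0,c)$'', i.e.\ $\overline c=\tilde c$. Once that is granted, both arguments finish identically: the energy sum over $\tilde c$ vanishes by detailed balance upstairs, and the $n_U$-dependent term vanishes because $\mathbf R\tilde c=0$ kills the $U$-components.

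Regarding the step you single out as the main obstacle --- passing from $R_i\in\mathcal C$ to $\pi_V R_i\in\mathcal C_V$, so that hypothesis~1 applies and forces $\tilde c'=\tilde c$: the paper dispatches exactly this point in one sentence, namely ``Since $\mathcal C$ does not contain reactions with zero-$V$-reduction and $\mathcal C_V$ does not contain reactions that are not $1$-$1$ reactions, we deduce that (upon reordering of the reactions) the cycle $\overline c$ has the following form $\overline c=(\mathbf 0,c)$.'' No further detail is given; in particular the paper neither establishes the bijection between cycle-reactions that you propose nor verifies the kernel inclusion $\ker(\pi_V\mathbf R)\subset\ker(\pi_U\mathbf R)$. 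Your write-up therefore already matches the paper's argument at this step --- the only difference is that you flag it explicitly, whereas the paper presents it as an immediate consequence of hypotheses~1 and~2 taken together.
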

    Notice that the three conditions in the above theorem imply that all the cycles of the reduced and of the non reduced chemical systems are as in the right part of Figure \ref{fig1}.
\begin{proof} 
Since we are assuming that every $\overline R \in \mathcal C_V $ is $1$-$1$ we have that for every $\overline R_j \in \mathcal C_V $ there exists a unique $R_j $ such that $\pi_V R_j=\overline R_j$. Moreover since equality \eqref{reduced rates 11} holds, we deduce that 
    \[
     \mathcal E( \pi_V R_j)=   \log \left( \frac{ K_{- \pi_V R_j} [n_U] }{ K_{\pi_V R_j} [n_U]} \right)  =         \log \left( \frac{  K_{ - R_j} }{ K_{R_j}} \right) - \sum_{s \in U } R_j(s) \log( n_s) = \mathcal E(R_j) - \sum_{s \in U } R_j(s) \log( n_s). 
    \]
    Therefore for every $c \in \mathcal C_V$ we have that
    \[
      \sum_{j =1}^{v/2} c(j) \mathcal E(\pi_V R_j)=  \sum_{j =1}^{v/2} c(j) \mathcal E(R_j) - \sum_{j =1}^{v/2} c(j) \sum_{s \in U } R_j(s) \log( n_s)
    \]
    where $v:=|\mathcal R_V|$. 
    We now use the fact that $p(\mathcal C)=\mathcal C_V$ to deduce that there exists a $\overline c \in \mathcal C$ such that $p(\overline c)=c $. 
   Since $\mathcal C $  does not contain reactions with zero-$V$-reduction and $\mathcal C_V$ does not contain reactions that are not $1$-$1$ reactions, we deduce that (upon reordering of the reactions) the cycle $\overline c $ has the following form $ \overline c=(\textbf{0}, c)$. 
    Since $(\Omega, \mathcal R, \mathcal K )$ satisfies the detailed balance property it follows that 
\[ 
 0= \sum_{j =1}^{r/2} \overline{c} (j)  \mathcal E(R_j) =   \sum_{j =1}^{v /2} c(j)  \mathcal E(R_j) . 
  \] 
  Therefore we deduce that 
  \[
     \sum_{j =1}^{r/2} c(j) \mathcal E(\pi_V R_j)=   - \sum_{s \in U } \sum_{j =1}^{r/2} c(j) R_j(s) \log( n_s) =0
\] 
Where the last equality comes from the fact that $c\in \mathcal C_V $, hence $\sum_{j =1}^{r/2} c(j) R_j(s) =0$ for every $s \in U $. 
\end{proof}
\begin{remark}
Notice that the requirements 1., 2. and 3. in Theorem \ref{thm:same cycles implies db} are all topological requirements that depend only on the chemical network $(\Omega, \mathcal R)$ and do not depend on the choice of the rate function. As a consequence, when the assumptions of Theorem \ref{thm:same cycles implies db} are satisfied, then the detailed balance property of the reduced kinetic system is stable under perturbations of the reaction rates of the non reduced kinetic system $(\Omega, \mathcal R, \mathcal K )$. 
\end{remark}
Notice that in Theorem \ref{thm:same cycles implies db} we have three assumptions. These three assumptions are all needed in order to have a reduced kinetic system that satisfies the detailed balance property in a stable manner. We start by analysing what happens when assumptions 1. and 2. of Theorem \ref{thm:same cycles implies db}  holds, but $p (\mathcal C) \neq \mathcal C_V $. 

Since in the following proposition we deal with a perturbation of the rates it is convenient to introduce the definition of norm of a rate function. 
Let $\mathcal K $ be a rate function, then 
\begin{equation} \label{eq norm rate}
    \| \mathcal K  \|:= \max_{R \in \mathcal R} |\mathcal K (R) | . 
\end{equation}

\begin{proposition} \label{prop: instability of (DB) different cycles}
    Assume that the kinetic system $(\Omega, \mathcal R, \mathcal K ) $ satisfies the detailed balance property. Let $\mathcal C $ be the associated space of cycles.  
       Let $U $ and $V$ be as in Definition \ref{def:reduced chemical netowork} and let $(V, \mathcal R_{V}, {\mathcal K }[n_U]) $ be the $U$-reduced kinetic system $(V, \mathcal R_{V}, {\mathcal K }[n_U]) $. 
        Assume that $p(\mathcal C) \neq  \mathcal C_{V} $ and each reaction in $\mathcal C_V$ is $1$-$1$ and $ \mathcal C $ does not contain reactions with zero-$V$-reductions. 
        Assume that the reduced kinetic system satisfies the detailed balance property. Then we have two possibilities: 
        \begin{enumerate}
          \item either the concentrations $n_U:=( n_i)_{i \in U } $ satisfy \eqref{eq:conc at equilibirum} for a vector of energies $E \in \mathbb R^N $ satisfying \eqref{eq:energy}; 
            \item or, alternatively, we have that for every $\delta >0 $ there exists a map $ \mathcal K_\delta : \mathcal R \rightarrow \mathbb R_+$ such that
            \[
            \|\mathcal K- \mathcal K_\delta \| < \delta 
            \]
            and such that $(\Omega, \mathcal R, \overline {\mathcal K}_\delta [n_U])$ does not satisfy the detailed balance condition. 
            \end{enumerate}
\end{proposition}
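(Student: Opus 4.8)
The plan is to exploit the same identity for the reduced energies that was derived in the proof of Theorem~\ref{thm:same cycles implies db}, but now to produce an \emph{obstruction} to the circuit condition whenever $p(\mathcal C)\neq\mathcal C_V$ and the frozen concentrations are not at equilibrium. Since every reaction in $\mathcal C_V$ is $1$-$1$ and $\mathcal C$ has no zero-$V$-reduction reactions, for each $\overline R_j\in\mathcal C_V$ there is a unique $R_j\in\mathcal R$ with $\pi_V R_j=\overline R_j$, and \eqref{reduced rates 11} gives
\[
\mathcal E(\pi_V R_j)=\mathcal E(R_j)-\sum_{s\in U}R_j(s)\log(n_s).
\]
Hence for any $c\in\mathcal C_V$,
\[
\sum_{j=1}^{v/2}c(j)\,\mathcal E(\pi_V R_j)=\sum_{j=1}^{v/2}c(j)\,\mathcal E(R_j)-\sum_{s\in U}\Bigl(\sum_{j=1}^{v/2}c(j)R_j(s)\Bigr)\log(n_s)=\sum_{j=1}^{v/2}c(j)\,\mathcal E(R_j),
\]
because $c\in\ker\textbf R_V\subset\ker\pi_V\textbf R$ forces $\sum_j c(j)R_j(s)=0$ for $s\in U$ as well. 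So the reduced system satisfies the detailed balance (circuit) condition precisely when $\sum_{j}c(j)\mathcal E(R_j)=0$ for all $c\in\mathcal C_V$; equivalently, letting $\tilde c\in\mathbb R^{r/2}$ be the lift of $c$ by zeros (which lies in $\ker\pi_V\textbf R$), the condition is $\langle w,\tilde c\rangle=0$ for all such lifts, where $w$ is as in \eqref{w}.

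Now I would split into the two alternatives. The key point is that $\mathcal C=\ker\textbf R\subset\ker\pi_V\textbf R$, and $p(\mathcal C)\neq\mathcal C_V$ means this inclusion is strict, i.e. there is a lifted cycle $\tilde c_0\in\ker\pi_V\textbf R$ with $\textbf R\tilde c_0=\pi_U\textbf R\tilde c_0\neq 0$. Detailed balance of $(\Omega,\mathcal R,\mathcal K)$ gives $\langle w,c\rangle=0$ for all $c\in\mathcal C=\ker\textbf R$, but says nothing about $\tilde c_0$. If nonetheless $\langle w,\tilde c\rangle=0$ for \emph{all} $\tilde c\in\ker\pi_V\textbf R$ (which is what reduced detailed balance requires), then $w\in(\ker\pi_V\textbf R)^\perp=\operatorname{range}((\pi_V\textbf R)^T)$, so $w=(\pi_V\textbf R)^T\eta$ for some $\eta\in\mathbb R^d$. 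Comparing with $w=\textbf R^T E$ (which holds by \eqref{eq:energy}), we get $\textbf R^T(E-(\eta,\textbf 0_{N-d}))=\textbf R^T E-(\pi_V\textbf R)^T\eta=\textbf 0$ after noting $\textbf R^T(\eta,\textbf 0)=(\pi_V\textbf R)^T\eta$; hence $(\pi_U\textbf R)^T\pi_U E=\textbf 0$ reading off the $U$-block, which combined with $w=(\pi_V\textbf R)^T\eta$ shows that $n_U:=e^{-\pi_U E}$ can be realized by an energy $E'$ with $\pi_V E'=\eta$ and $E'$ still solving \eqref{eq:energy}: this is exactly alternative~1, that $n_U$ satisfies \eqref{eq:conc at equilibrium} for some energy vector. (I would be slightly careful here about whether one gets a \emph{single} consistent energy vector; the chemical potential freedom from $\mathcal M$ is what provides the slack, much as in Corollary~\ref{cor:one frozen conc db}.)

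For alternative~2, suppose $n_U$ is \emph{not} at equilibrium values, i.e. $w\notin\operatorname{range}((\pi_V\textbf R)^T)$ in the sense above; equivalently there exists $\tilde c_0\in\ker\pi_V\textbf R$ with $\langle w,\tilde c_0\rangle\neq 0$. If the reduced system happened to satisfy detailed balance, that would contradict the previous paragraph, so in fact detailed balance already fails---but the proposition instead asks for a \emph{perturbation} statement, presumably because we want to start from a $\mathcal K$ that might accidentally make it hold. So I would argue: if $(V,\mathcal R_V,\mathcal K[n_U])$ does \emph{not} satisfy detailed balance we are done trivially (take $\mathcal K_\delta=\mathcal K$); if it does, then by the above $n_U$ is at equilibrium, which is alternative~1. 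To get the genuine perturbation content, I would instead phrase it as: perturb $\mathcal K$ to $\mathcal K_\delta$ by multiplying $K_{R_0}$ for a single reaction $R_0\in\mathcal R$ lying in a cycle of $\mathcal C$ by $(1+\varepsilon)$ and adjusting its reverse to preserve detailed balance of $(\Omega,\mathcal R,\cdot)$ is impossible while keeping reduced-DB---more precisely, one shows the map $\mathcal K\mapsto\bigl(\sum_j c(j)\mathcal E(R_j)\bigr)_{c}$ restricted to the detailed-balance locus of the big system has, at a point where $p(\mathcal C)\subsetneq\mathcal C_V$ and $n_U$ off equilibrium, a nonzero differential in the direction of some cycle variable, so one can move off the reduced-DB locus within the big-DB locus. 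The \textbf{main obstacle} is precisely this last step: parametrizing the detailed-balance locus of the non-reduced system (which is cut out by the Wegscheider relations \eqref{eq:cycles and DB}) and showing that the reduced-DB condition is \emph{not} implied by it plus the equilibrium-of-$n_U$ condition, so that the complement is open and dense; the linear-algebra identification of $(\ker\pi_V\textbf R)^\perp$ with $\operatorname{range}((\pi_V\textbf R)^T)$ is the crux that makes the dichotomy clean.
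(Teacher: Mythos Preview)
Your proposal contains a decisive error in the very first displayed computation. You claim that for $c\in\mathcal C_V$ the term $\sum_{s\in U}\bigl(\sum_j c(j)R_j(s)\bigr)\log(n_s)$ vanishes, because ``$c\in\ker\textbf R_V\subset\ker\pi_V\textbf R$ forces $\sum_j c(j)R_j(s)=0$ for $s\in U$ as well.'' This is false: membership in $\ker\pi_V\textbf R$ gives $\sum_j c(j)R_j(s)=0$ for $s\in V$, \emph{not} for $s\in U$. In fact the hypothesis $p(\mathcal C)\neq\mathcal C_V$ says precisely that some $c\in\mathcal C_V$ has a lift $\tilde c$ with $\pi_U\textbf R\,\tilde c\neq 0$, i.e.\ $\sum_j c(j)R_j(\ell)\neq 0$ for some $\ell\in U$. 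You observe exactly this two paragraphs later, which should have flagged the inconsistency.

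Because of this error your characterization ``reduced detailed balance $\Leftrightarrow\langle w,\tilde c\rangle=0$ for all $\tilde c\in\ker\pi_V\textbf R$'' is wrong: the genuine condition retains the $n_U$-dependent term. After inserting $w=\textbf R^T E$ and using $c\in\ker\pi_V\textbf R$, reduced detailed balance along $c$ reads
\[
\sum_{s\in U}\Bigl(\sum_j c(j)R_j(s)\Bigr)\bigl(E(s)+\log n_s\bigr)=0,
\]
which is the paper's identity \eqref{eq:db perturbative different cycles}. Your attempt to extract alternative~1 from ``$w\in\operatorname{range}((\pi_V\textbf R)^T)$'' cannot work, since that condition does not involve $n_U$ at all. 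The paper does not try to characterize alternative~1; it simply assumes $n_U$ fails \eqref{eq:conc at equilibirum} and constructs $\mathcal K_\delta$ explicitly: pick $\ell\in U$ with $\sum_j c(j)R_j(\ell)\neq 0$, perturb the energy by $E_\delta$ supported only at $\ell$, and set $\mathcal K_\delta(R)=\mathcal K(R)$ on $\mathcal R_s$ and $\mathcal K_\delta(-R)=\mathcal K(-R)\,e^{\sum_i R(i)E_\delta(i)}$. This keeps detailed balance of the big system (with energy $E+E_\delta$), while the displayed identity acquires the single extra term $E_\delta(\ell)\sum_j c(j)R_j(\ell)\neq 0$, so reduced detailed balance fails. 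Your alternative-2 paragraph gestures at a perturbation on the ``big-DB locus'' but never writes one down; once the correct circuit identity (with the $n_U$ term) is in hand, the construction is immediate.
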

\begin{proof}
Assume that the detailed balance property of the reduced system holds. Let $c \in \mathcal C_{V } \setminus p(\mathcal C)  $.  Then we have that
\begin{equation} \label{eq:reduced db}
\prod_{j=1}^{v/2} \left(\frac{K_{\overline R_j} [n_U]}{K_{-\overline R_j} [n_U]} \right)^{c(j)} =1 ,
\end{equation}
where $ \{ \overline R_j\}_{j=1}^{v/2} = \mathcal R_V$.
Since we are assuming that all the reactions in the cycles in $\mathcal C_V $ are one-to-one, the definition of $\mathcal K[n_U]$ together with \eqref{eq:reduced db} implies that
\begin{align*}
\sum_{j=1}^{v/2}  c(j) \log\left( \frac{K_{R_j}}{K_{-R_j}} \right) = \sum_{j=1}^{v/2} \sum_{s \in U }  c(j) R_j(s) \log(n_s),  
\end{align*}
where $R_j =\pi_V^{-1} (\overline R_j)$.
Notice that since the detailed balance property holds we have that $\log\left( \frac{K_R}{K_{-R} }\right) = - \sum_{i \in \Omega} R(i) E(i)  $ where $E$ is any solution to \eqref{eq:energy}. Hence using the fact that $c \in \mathcal C_V $ we deduce that 
\begin{align*}
  \sum_{j=1}^{v/2}  c(j) \log\left( \frac{K_{R_j}}{K_{-R_j}} \right) = - \sum_{j=1}^{v/2}  c(j)  \sum_{i \in \Omega} R_j (i) E(i)  =  - \sum_{j=1}^{v/2}  c(j)  \sum_{i \in U } R_j (i) E(i), 
\end{align*}
with $v= |\mathcal R_V|$. 
Hence we obtain that the detailed balance property of the reduced system holds if and only if for every $c \in \mathcal C_V$ we have that
\begin{align} \label{eq:db perturbative different cycles}
- \sum_{j=1}^{v/2}  c(j)  \sum_{i \in U \cap \mathcal D(\mathcal C_V) } R_j (i) E(i) &= - \sum_{j=1}^{v/2}  c(j)  \sum_{i \in U } R_j (i) E(i) = \sum_{j=1}^{v/2} \sum_{i \in U  }  c(j) R_j(i) \log(n_i) \\
&= \sum_{j=1}^{v/2} \sum_{i \in U  \cap \mathcal D(\mathcal C_V)  }  c(j) R_j(i) \log(n_i). \nonumber    
\end{align}
Notice that since $c \in \mathcal C_V \setminus p(\mathcal C) $ we have that $ \sum_{j=1}^{v/2}   c(j) R_j(\ell) \neq 0 $ for at least an $\ell \in U  \cap \mathcal D(\mathcal C_V)$. 
Assume that $n_U $ has not the form \eqref{eq:conc at equilibirum}, we want to prove that it is possible to construct a perturbation of the rate function $\mathcal K_\delta $ that is such that \eqref{eq:db perturbative different cycles} fails for these rates, hence the kinetic system $(V, \mathcal R_V, \mathcal K_\delta[n_U]) $ does not satisfy the detailed balance property.
Then we can define the perturbed rate function $\mathcal K_\delta : \mathcal R \rightarrow \mathbb R_+ $ as $\mathcal K_\delta (R)=\mathcal K (R)$ for every $R \in \mathcal R_s $ and  \[
\mathcal K_\delta (-  R)  =\mathcal K_\delta ( R) e^{\sum_{i \in \Omega }  R(i) (E(i)+ E_\delta (i))} =\mathcal K(-R)  e^{\sum_{i \in \Omega }  R(i)  E_\delta (i)} 
\] 
for every $R \in \mathcal R_s$. Here we have that $E_\delta (i)=0$ for every $i \neq \ell $ and $E_\delta (\ell) <  \frac{\delta}{\max_{R \in \mathcal R_s} |R(\ell)|\max_{ R \in \mathcal R } \mathcal K(R)}$. 
Notice that by construction $E+E_\delta$ is an energy of the perturbed kinetic system $(\Omega , \mathcal R, \mathcal K_\delta )$ and the perturbed rates $\mathcal K_\delta$ satisfy the assumptions of the theorem indeed we have that by construction $(\Omega, \mathcal R, \mathcal K_\delta) $ satisfies the detailed balance property and moreover we have that 
\[ 
\| \mathcal K - \mathcal K_\delta  \| \leq \max_{ R \in \mathcal R } |\mathcal K(R)- \mathcal K_\delta (R)| \leq \max_{R\in \mathcal R} \mathcal K (R) \max_{ R \in \mathcal R } |1 - e^{E_\delta (\ell )  R(\ell) } | \leq\max_{ R \in \mathcal R } \mathcal K(R) \max_{ R \in \mathcal R } |E_\delta (\ell )  R(\ell)  | <  \delta.
\]
Notice that to obtain the above inequalities we are using the fact that for sufficiently small $x >0$ we have that $1- e^{x } < x $.  
Moreover, equality \eqref{eq:db perturbative different cycles} fails for the kinetic system $(\Omega , \mathcal R, \mathcal K_\delta )$. 
Indeed we have that 
\begin{align*}
 \sum_{j=1}^{v/2}  c(j)  \sum_{i \in U \cap \mathcal D(\mathcal C_V) } R_j (i) [E(i)+ E_\delta(i)  ] + \sum_{j=1}^{v/2} \sum_{i \in U  \cap \mathcal D(\mathcal C_V)  }  c(j) R_j(i) \log(n_i)=    E_\delta(\ell ) \sum_{j=1}^{v/2}  c(j) R_j (\ell)  \neq 0   
\end{align*}
that, therefore, the kinetic system $(\Omega , \mathcal R, \mathcal K_\delta[n_U] )$ does not satisfy the detailed balance property. 
\end{proof}
In the following example, we show that the assumption that the reactions in $\mathcal C_V$ are $1$-$1$ reactions is needed for Theorem \ref{thm:same cycles implies db} to hold.  
\begin{example} \label{ex:not 11 no Db}
Let $(\Omega, \mathcal R)$ be the network in Example \ref{ex:1cycle reduced}. 
As already anticipated in Example \ref{ex:2cycle reduced}, the space of the cycles of this chemical network is 
\[ 
\mathcal C = \operatorname{span} \{ c_1, c_2 \}  
\]
with $c_1=(1,0,1,-1,0,1)$ and $c_2 =(0,1,1,-1,1,0)$.
Consider the $U$ reduced kinetic system with $U=\{5,6\}$. Then we obtain that
\[
\mathcal C_{V} =  \operatorname{span}(c)
\]
where $c=(1,1,-1,1)$.
Notice that $p[c_1] = p[c_2] = c $.
As a consequence in this example it holds that $p (\mathcal C) = \mathcal C_{V } $. 
Assume that 
\[  K_{R_1}  = K_{-R_1} =  K_{R_2}  =K_{-R_2} =  K_{R_3}= K_{-R_3}  = K_{R_4}  = K_{-R_4} = K_{-R_5}  = K_{R_5} =K_{R_6}  =K_{-R_6} =1. 
\]
Clearly for every cycle $c \in \mathcal C$ we have that 
\[
\prod_{j=1}^{6} \left( \frac{  K_{R_j} }{K_{-R_j}} \right)^{c(j) } =1.
\]
Hence the non reduced kinetic system $(\Omega, \mathcal R, \mathcal K )$ satisfies the detailed balance property. 
Let $\tilde{R}_i$, for $i=1,2,3,4$ be the reactions of the reduced system computed in Example \ref{ex:1cycle reduced}.
Equality \eqref{reduced rates} implies that the rates of the reduced kinetic system are given by  
\[
K_{\tilde{R}_4}[n_U]= K_{R_5} + K_{R_6} = 2
\ \text{ and } \ K_{-\tilde{R}_4}[n_U] =  K_{-R_{6}} n_5 +K_{-R_{5}} n_6  =  n_5 + n_6,
\] 
similarly we have that
\[
K_{\tilde{R}_1}[n_U]= n_5+n_6 \  \text{ and } \  K_{-\tilde{R}_1}[n_U] =2. 
\] 
In this case it is easy to see that
 for every cycle $c \in \mathcal C_V$ we have
\[
\prod_{j=1}^{4} \left( \frac{  K_{\tilde R_j} [n_U]}{K_{-\tilde R_j} [n_U]} \right)^{c(j) } =1. 
\]
Hence the reduced kinetic system satisfies the detailed balance property. 
Assume instead that 
\[  K_{R_1}  = K_{-R_1} =  K_{R_2}  =K_{-R_2} =  K_{R_3}= K_{-R_3}  = K_{R_4}  = K_{-R_4} = K_{-R_5}  = K_{R_5}  =1. 
\]
while 
\[ 
 K_{R_6} = 2, \   K_{-R_6} = 2. 
\] 
Notice that by construction for every cycle $c \in \mathcal C$ we have that 
\[
\prod_{j=1}^{6} \left( \frac{  K_{R_j} }{K_{-R_j}} \right)^{c(j) } =1.
\]
Hence $(\Omega, \mathcal R, \mathcal K)$ satisfies the detailed balance property.
However, in this case we have that 
\[
K_{\tilde{R}_4}[n_U]= K_{R_5} + K_{R_6} = 3
\ \text{ and } \ K_{-\tilde{R}_4}[n_U] = ( K_{-R_{6}} n_5 +K_{-R_{5}} n_6)  =  n_6 + 2 n_5  
\] 
while $K_{\tilde{R}_1}[n_U]= n_5+n_6 $ while $K_{-\tilde{R}_1}[n_U] =2 $.
Therefore for $c=(1,1,-1,1)$ we have that 
\[
\prod_{j=1}^{4} \left( \frac{  K_{R_j}[n_U] }{K_{-R_j} [n_U]} \right)^{c(j) } = \frac{3 (n_5+n_6) }{2(n_6+ 2n_5)  } \neq 1
\]
for $n_5 \neq n_6 $.  Hence, the detailed balance property in this case holds only if the concentrations of $n_5, n_6 $ are chosen at equilibrium values, i.e. if they are such that $n_5=N_5$ and $n_6=N_6$ where $N$ is the steady state of the kinetic system $(\Omega, \mathcal R, \mathcal K ) $. 
\end{example}
The example above motivates the following proposition. 
\begin{proposition} \label{prop: instability of (DB) no one to one}
    Assume that the kinetic system $(\Omega, \mathcal R, \mathcal K ) $ satisfies the detailed balance property.
    Let $U $ and $V$ be as in Definition \ref{def:reduced chemical netowork}.
    Assume moreover that there exists at least one reaction $R \in \mathcal C_V $ that is not a $1$-$1$ reaction.
        Assume that the reduced system $(V, \mathcal R_V, \mathcal K [n_U]) $ satisfies the detailed balance.  
        Then we have two possibilities: 
        \begin{enumerate}
            \item either the concentrations $n_U:=\{ n_i\}_{i \in U } $ satisfy \eqref{eq:conc at equilibirum}; 
            \item or, alternatively,  we have that for every $\delta >0 $ there exists a map $ \mathcal K_\delta : \mathcal R \rightarrow \mathbb R_+$ such that
            \[
            \|\mathcal K- \mathcal K_\delta \| < \delta 
            \]
            and such that $(\Omega, \mathcal R, {\mathcal K}_\delta [n_U])$ does not satisfy the detailed balance condition. 
        \end{enumerate}
\end{proposition}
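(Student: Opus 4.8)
The plan is to argue along the lines of Proposition~\ref{prop: instability of (DB) different cycles}. Granting that the reduced system satisfies detailed balance, I would fix a non-$1$-$1$ reduced reaction $\overline R_0\in\mathcal R_V$ lying in the cycle space, say $\overline R_0=\textbf{R}_V e_{j_0}$ with $c_0(j_0)\neq 0$ for some $c_0\in\mathcal C_V$, and let $\{R_1,\dots,R_m\}=\pi_V^{-1}(\overline R_0)$ with $m\geq 2$. Combining \eqref{reduced rates}, \eqref{Khat} with the non-reduced identity $K_{-R}/K_R=e^{R^{T}E}$ from Lemma~\ref{lem:db special ss} and with $R_k(i)=\overline R_0(i)$ for $i\in V$, one gets $\hat{K}_{-R_k}=\hat{K}_{R_k}\,e^{\overline R_0^{T}\pi_V E}\,e^{q_k}$ where $q_k:=\sum_{s\in U}R_k(s)\bigl(E(s)+\log n_s\bigr)$, hence
\[
\frac{K_{-\overline R_0}[n_U]}{K_{\overline R_0}[n_U]}=e^{\overline R_0^{T}\pi_V E}\,\frac{\sum_k \hat{K}_{R_k}e^{q_k}}{\sum_k \hat{K}_{R_k}},
\]
and the same formula with a one-term (hence trivial) average for every $1$-$1$ reduced reaction. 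Substituting into the circuit condition \eqref{eq:cycles and DB} for the reduced system and using $\textbf{R}_V c=0$ to cancel the factors $e^{\overline R_j^{T}\pi_V E}$, the reduced detailed balance property becomes equivalent to $\sum_j c(j)\log A_j=0$ for every $c\in\mathcal C_V$, where $A_j$ denotes the $\hat{K}$-weighted average of the numbers $e^{q}$ taken over the preimages of $\overline R_j$.

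Next I would distinguish two cases according to the exponents $q_1,\dots,q_m$ of $\overline R_0$. If they are not all equal, then $A_{j_0}$ is a strict convex combination of the pairwise distinct numbers $e^{q_k}$; replacing each pair $K_{R_k},K_{-R_k}$ by $\lambda_k K_{R_k},\lambda_k K_{-R_k}$ with $\lambda_k>0$ close to $1$ leaves every ratio $K_R/K_{-R}$ unchanged (hence the full Wegscheider condition, hence the non-reduced detailed balance), leaves every $A_j$ with $j\neq j_0$ unchanged (each non-reduced reaction projects to a unique reduced one), and moves $A_{j_0}$ to any prescribed value in the nonempty open interval $\bigl(\min_k e^{q_k},\max_k e^{q_k}\bigr)$. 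Choosing that value different from the original one makes the circuit product along $c_0$ equal to $(A_{j_0}^{\mathrm{new}}/A_{j_0}^{\mathrm{old}})^{c_0(j_0)}\neq 1$, so the resulting $\mathcal K_\delta$ violates the reduced detailed balance while keeping the non-reduced one, and $\|\mathcal K-\mathcal K_\delta\|<\delta$ in the norm \eqref{eq norm rate} for $\lambda_k$ close enough to $1$; this is possibility~2.

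If instead $q_1=\dots=q_m=:\bar q$, I note that $R_k(s)=0$ whenever $s\notin\mathcal D(\mathcal C_V)$ by \eqref{eq:states in cycles}, so $\bar q$ only involves the quantities $E(s)+\log n_s$ with $s\in U\cap\mathcal D(\mathcal C_V)$. If $n_U$ satisfies \eqref{eq:conc at equilibirum} we are in possibility~1. Otherwise I first apply the auxiliary perturbation $K'_{-R}=K_{-R}\,e^{R^{T}E_\delta}$, $K'_R=K_R$ for $R\in\mathcal R_s$, where $E_\delta$ is supported on a single index $\ell\in U$ for which $R_a(\ell)\neq R_b(\ell)$ for two preimages $R_a,R_b$ of $\overline R_0$; such $\ell$ exists because $R_a\neq R_b$ agree on $V$, and, say, $R_a(\ell)\neq 0$ with $\pi_V R_a\in\mathcal C_V$ forces $\ell\in U\cap\mathcal D(\mathcal C_V)$. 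This keeps the non-reduced detailed balance (new energy $E+E_\delta$) and turns the exponents into $\bar q+R_k(\ell)E_\delta(\ell)$, which are no longer all equal; then the first case applies to the perturbed system, which either already violates the reduced detailed balance or is turned into a system that does by a further $\lambda$-scaling. Composing these arbitrarily small steps gives the desired $\mathcal K_\delta$, so again possibility~2 holds.

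The bookkeeping with \eqref{reduced rates}, \eqref{Khat} and the size estimates is routine and mirrors Proposition~\ref{prop: instability of (DB) different cycles}. The main obstacle is the degenerate subcase $q_1=\dots=q_m$: there the non-$1$-$1$ reaction behaves, for the reduced circuit condition, exactly like a $1$-$1$ one, so one cannot destroy detailed balance by perturbing only its own rates and must first ``spread'' the exponents via an auxiliary energy perturbation. Making this precise requires verifying that a suitable index $\ell$ exists and lies in $U\cap\mathcal D(\mathcal C_V)$, that the hypotheses of the first subcase hold for the perturbed system, and that the two perturbations together stay within the prescribed $\delta$.
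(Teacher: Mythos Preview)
Your approach is essentially the paper's: rewrite the reduced circuit condition as $\sum_j c(j)\log A_j=0$ with $A_j$ the $\hat K$-weighted average of the numbers $e^{q_k}=\prod_{s\in U}(n_s e^{E(s)})^{R_k(s)}$ over the preimages of $\overline R_j$, and then perturb the forward/backward rates of the preimages of a non-$1$-$1$ reaction so that every ratio $K_R/K_{-R}$ (hence the non-reduced Wegscheider condition) is preserved while the weighted average $A_{j_0}$ moves. The paper carries this out by an additive shift $K_{R_{\overline k_\ell}}\mapsto K_{R_{\overline k_\ell}}+\delta$ on a single preimage and a derivative computation; your multiplicative scaling $K_{\pm R_k}\mapsto\lambda_k K_{\pm R_k}$ is the same device. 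The one place your argument goes beyond the paper's is the explicit treatment of the degenerate subcase $q_1=\cdots=q_m$: there $A_{j_0}$ is the constant $e^{\bar q}$ and is \emph{insensitive} to any re-weighting of the $\hat K_{R_k}$, so a single rate perturbation cannot break the reduced circuit condition along $c_0$. Your two-step fix (first an energy perturbation $E\mapsto E+E_\delta$ supported at an index $\ell\in U$ where two preimages differ, which separates the $q_k$'s while keeping non-reduced detailed balance, then the $\lambda$-scaling) closes this gap; the paper's one-step derivative argument does not isolate this case.
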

\begin{proof} 
Assume that the kinetic system  $(V, \mathcal R_V, \mathcal K[n_U] ) $ satisfies the detailed balance property. Then we must have that if $c \in \mathcal C_V$, then
\[
\sum_{j=1}^{v/2} c(j) \mathcal E(\overline R_j)=0
\]
where $\{ \overline  R_j\}_{j=1}^v =\mathcal R_V $. Assume that $c \in \mathcal C_V$ contains at least a reaction that is not one to one. 
Notice that for every $j \in \{1, \dots, v/2\} $ we have that 
\begin{align*}
    \mathcal E(\overline R_j)=\log \left( \frac{\sum_{R_{k_j} \in \pi_V^{-1} (\overline R_j)} \hat{K}_{- R_{k_j}}}{\sum_{R_{k_j} \in \pi_V^{-1} (\overline R_j) } \hat{K}_{ R_{k_j}}}\right) 
\end{align*}
where $\hat{K}_{R_{k_j}} = K_{R_{k_j}} \prod_{i \in U \cap I(R_j)} n_i^{-R_{k_j}(i)}$ and $\hat{K}_{-R_{k_j}} = K_{-R_{k_j}} \prod_{i \in U \cap F(R_{k_j})} n_i^{R_{k_j}(i)}$. 
The detailed  balance property of $(\Omega, \mathcal R, \mathcal K ) $ implies that $K_{-R_{k_j}} = K_{R_{k_j}}  e^{\sum_{i \in \Omega }R_{k_j}(i) E(i)}$ for every $E$ solution to \eqref{eq:energy}. 
Hence 
\[
\frac{\hat{K}_{-R_{k_j}}}{\hat{K}_{R_{k_j}}} = e^{\sum_{i \in \Omega }R_{k_j}(i) E(i)} \prod_{i \in U } n_i^{R_{k_j}(i)}.
\]
We deduce that 
\begin{align*}
    \mathcal E(\overline R_j)&=\log \left( \frac{\sum_{R_{k_j} \in \pi_V^{-1} (\overline R_j)} \hat{K}_{ R_{k_j}}e^{\sum_{i \in \Omega }R_{k_j}(i) E(i)} \prod_{i \in U } n_i^{R_{k_j}(i)}}{\sum_{R_{k_j}\in \pi_V^{-1} (\overline R_j) } \hat{K}_{R_{k_j}}}\right) \\
    &= \sum_{k \in V} \overline R_j(k) E(k) +\log \left( \frac{\sum_{R_{k_j} \in \pi_V^{-1} (\overline R_j)} \hat{K}_{ R_{k_j}} \prod_{i \in U }  (n_ie^{E(i)})^{R_{k_j}(i)}}{\sum_{R_{k_j}\in \pi_V^{-1} (\overline R_j) } \hat{K}_{R_{k_j}}}\right).
\end{align*}
The detailed balance property of the reduced system implies that
\begin{align*}
  0=  \sum_{j=1}^{v/2} c(j) \mathcal E(\overline R_j) &=   \sum_{k \in V} E(k) \sum_{j=1}^{v/2} c(j)\overline R_j(k)  +     \sum_{j=1}^{v/2} c(j) \lambda_j= \sum_{j=1}^{v/2} c(j) \lambda_j,
\end{align*}
where 
\[ 
\lambda_j:= \log \left( \frac{\sum_{R_{k_j} \in \pi_V^{-1} (\overline R_j)} \hat{K}_{ R_{k_j}} \prod_{i \in U }  (n_ie^{E(i)})^{R_{k_j}(i)}}{\sum_{R_{k_j} \in \pi_V^{-1} (\overline R_j) } \hat{K}_{ R_{k_j}}}\right)
\]
We have two options, either $n_U $ is given by \eqref{eq:conc at equilibirum}, or alternatively, 
\[
\prod_{i \in U }  (n_ie^{E(i)})^{R_{k_j}(i)} = \alpha_{k_j} \neq 1 \text{ for some } j \in \{1, \dots, v/2\} 
\]
and 
\[ 
 \sum_{j=1}^{v/2} c(j) \log \left( \frac{\sum_{R_{k_j} \in \pi_V^{-1} (\overline R_j)} \hat{K}_{ R_{k_j}} \alpha_{k_j} }{\sum_{R_{k_j} \in \pi_V^{-1} (\overline R_j) } \hat{K}_{ R_{k_j}}}\right) =0.
\]
Without loss of generality we can assume that $\ell \in \{ 1, \dots, v/2\} $ is such that $c(\ell) \neq 0$ and $\alpha_{k_\ell} \neq 1 $. Consider a reaction $R_{\overline k_\ell } \in \pi^{-1}_V (R_\ell) $, and define the rate function 
${\mathcal K}_\delta  : \mathcal R \rightarrow \mathbb R_*$ such that 
\[
{\mathcal K}_\delta(R)=\mathcal K (R) \text{ for every } R \neq R_{\overline k_\ell }, - R_{\overline k_\ell } \text{ and }  {\mathcal K}_\delta ( R_{\overline k_\ell })=\mathcal K (R_{\overline k_\ell }) + \delta,  \quad  {\mathcal K}_\delta (- R_{\overline k_\ell })=\mathcal K_\delta (R_{\overline k_\ell }) e^{ \sum_{ i \in \Omega } R_{\overline k_\ell }(i) E(i) }
\]
for $\delta >0$.
Notice that by construction we have that the kinetic system $(\Omega , \mathcal R, \mathcal K_\delta  ) $ satisfies the detailed balance property and that $\| \mathcal K - \mathcal K_\delta \| < \delta $. Notice also that the energies of the unperturbed kinetic system $(\Omega , \mathcal R , \mathcal K) $ are also energies for  $(\Omega , \mathcal R , \mathcal K_\delta) $. 
We denote with $\hat{K}^{(\delta)}_{ R_j}$ the rates induced by $\mathcal K_\delta$ via \eqref{Khat}. 
Then 
\begin{align*}
 \sum_{j=1}^{v/2} c(j) \log \left( \frac{\sum_{R_{k_j }\in \pi_V^{-1} (\overline R_j)} \hat{K}^{(\delta)}_{ R_{k_j }} \alpha_{k_j } }{\sum_{R_{k_j } \in \pi_V^{-1} (\overline R_j) } \hat{K}^{(\delta)}_{ R_{k_j }}}\right) &= \sum_{j=1, j \neq \ell }^{v/2} c(j) \log \left( \frac{\sum_{R_{k_j } \in \pi_V^{-1} (\overline R_j)} \hat{K}_{ R_{k_j }} \alpha_{k_j } }{\sum_{R_{k_j } \in \pi_V^{-1} (\overline R_j) } \hat{K}_{ R_{k_j }}}\right) \\
 &+ c(\ell ) \log \left( \frac{\sum_{ R_{k_\ell}  \in \pi_V^{-1} (\overline R_\ell)} \hat{K}^{(\delta)}_{ R_{ k_\ell }} \alpha_{k_\ell } }{\sum_{R_{k_\ell} \in \pi_V^{-1} (\overline R_\ell) } \hat{K}^{(\delta)}_{ R_\ell}}\right). 
\end{align*}
We now take the derivative in $\delta$ and obtain that 
\begin{align*}
 \frac{d}{d\delta} \sum_{j=1}^{v/2} c(j) \log \left( \frac{\sum_{R_{k_j }\in \pi_V^{-1} (\overline R_j)} \hat{K}^{(\delta)}_{ R_{k_j }} \alpha_{k_j } }{\sum_{R_{k_j } \in \pi_V^{-1} (\overline R_j) } \hat{K}^{(\delta)}_{ R_{k_j }}}\right) = c(\ell ) \alpha_{\overline k_\ell } & \neq 0. 
\end{align*}
Since we have that $\sum_{j=1}^{v/2} c(j)  \left( \frac{\sum_{R_{k_j }\in \pi_V^{-1} (\overline R_j)} \hat{K}^{(0)}_{ R_{k_j }} \alpha_{k_j } }{\sum_{R_{k_j } \in \pi_V^{-1} (\overline R_j) } \hat{K}^{(0)}_{ R_{k_j }}}\right) =0$, this implies that 
\[
 \sum_{j=1}^{v/2} c(j)  \left( \frac{\sum_{R_{k_j }\in \pi_V^{-1} (\overline R_j)} \hat{K}^{(\delta)}_{ R_{k_j }} \alpha_{k_j } }{\sum_{R_{k_j } \in \pi_V^{-1} (\overline R_j) } \hat{K}^{(\delta)}_{ R_{k_j }}}\right)  \neq 0.
\]
Therefore
$(\Omega , \mathcal R_V ,{\mathcal K}_\delta [n_U]) $ does not satisfy the detailed balance property. 
\end{proof}
In the following example we show that also the assumption that the reactions in the cycles of the non-reduced kinetic system have non zero projection in the reduced system is needed in Theorem \ref{thm:same cycles implies db}. 
\begin{example}\label{ex:zero no Db}
    Let $(\Omega, \mathcal R)$ be the network in Example \ref{ex:1cycle reduced} and Example \ref{ex:not 11 no Db}. 
If
\[  K_{R_1}  = K_{-R_1} =  K_{R_2}  =K_{-R_2} =  K_{R_3}= K_{-R_3}  = K_{R_4}  = K_{-R_4} = K_{-R_5}  = K_{R_5} =K_{R_6}  =K_{-R_6} =1, 
\]
then $(\Omega , \mathcal R) $ satisfies the detailed balance property. Consider the reduced system induced by $\overline U=\{ 1, 4\} $.  Notice that, as shown in Example \ref{ex:1cycle reduced}, we have that $\pi_{\overline V} R_4=0$. As a consequence Assumption 2. in Theorem \ref{thm:same cycles implies db} fails. 
Let $\tilde{R}_i$, for $i=1,2,3,4,5$ be the reactions of the reduced system computed in Example \ref{ex:1cycle reduced}.
Equality \eqref{reduced rates} implies that 
\[
K_{\tilde{R}_1}[n_U]= n_1,\ K_{\tilde{R}_2}[n_U]= n_1, \  
 K_{-\tilde{R}_4}[n_U] =n_4, \ K_{-\tilde{R}_5}[n_U] =n_4
\] 
while the rest of the rates are equal to $1$. 
A cycle of the reduced system is $c=(1,1,2,1,1)$. 
Notice that 
\[
\prod_{j=1}^5 \left(\frac{K_{- \tilde R_j}}{K_{\tilde R_j}} \right)^{c(j)} =1 
\]
is true only when $n_1=n_4$. This means that the detailed balance property of the reduced system holds only if $n_1=n_4$. 
\end{example}
Motivated by the example above we prove the following statement. 
\begin{proposition} \label{prop: instability of (DB) one to one and zeros}
    Assume that the kinetic system $(\Omega, \mathcal R, \mathcal K ) $ satisfies the detailed balance property.
    Let $U $ and $V$ be as in Definition \ref{def:reduced chemical netowork}.
    Assume moreover that every reaction $R \in \mathcal C_V $ is a $1$-$1$ reaction. Moreover, we  assume that there exists at least a cycle $c_V \in \mathcal C_V$ that is such that every $c \in \mathcal C$ satisfying $p[c]= c_V$ contains at least one reaction with zero-$V$-reduction. 
        Assume that the reduced system $(V, \mathcal R_V, \mathcal K [n_U]) $ satisfies the detailed balance. 
        Then we have to possibilities: 
        \begin{enumerate}
            \item either the concentrations $n_U:=\{ n_i\}_{i \in U } $ satisfy \eqref{eq:conc at equilibirum}; 
            \item or, alternatively,  we have that for every $\delta >0 $ there exists a map $ \mathcal K_\delta : \mathcal R \rightarrow \mathbb R_+$ such that
            \[
            \|\mathcal K- \mathcal K_\delta \| < \delta 
            \]
            and such that $(\Omega, \mathcal R, {\mathcal K}_\delta [n_U])$ does not satisfy the detailed balance condition. 
        \end{enumerate}
\end{proposition}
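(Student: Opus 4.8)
The plan is to run the same argument as in the proof of Proposition~\ref{prop: instability of (DB) different cycles}, the only new ingredient being a combinatorial step that, from the zero-$V$-reduction hypothesis on $c_V$, produces a substance $\ell\in U\cap\mathcal D(\mathcal C_V)$ carrying a nonzero weight along $c_V$.

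First I would fix an energy $E\in\mathbb R^N$ of $(\Omega,\mathcal R,\mathcal K)$, i.e.\ a solution of \eqref{eq:energy}; by \eqref{w} this means $\mathcal E(R)=\sum_{i\in\Omega}R(i)E(i)$ for every $R\in\mathcal R_s$. Since every reaction belonging to $\mathcal C_V$ is $1$-$1$, each reduced reaction $\overline R_j$ appearing in a reduced cycle has a unique lift $R_j:=\pi_V^{-1}(\overline R_j)\in\mathcal R$, and combining \eqref{reduced rates 11} with the previous identity gives
\[
\mathcal E(\overline R_j)=\sum_{i\in V}\overline R_j(i)E(i)+\sum_{s\in U}R_j(s)\left(E(s)+\log n_s\right).
\]
Because $\sum_{j}c(j)\overline R_j=\textbf{R}_V c=0$ for any $c\in\mathcal C_V$, Lemma~\ref{lem:Db equivalence} then shows that $(V,\mathcal R_V,\mathcal K[n_U])$ satisfies the detailed balance property if and only if
\[
\sum_{s\in U}\left(E(s)+\log n_s\right)a_s(c)=0 \quad\text{for every } c\in\mathcal C_V, \qquad a_s(c):=\sum_{j}c(j)R_j(s).
\]
By hypothesis the reduced system does satisfy detailed balance, so this identity holds, in particular for $c=c_V$.

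The key step is to show that $\sum_{j}c_V(j)\,\pi_U R_j\neq 0$. I would argue by contradiction: if this sum vanished, the vector $\widetilde c$ supported on $\{R_j:c_V(j)\neq 0\}$ and equal to $c_V(j)$ on $R_j$ would satisfy $\pi_V\textbf{R}\,\widetilde c=\textbf{R}_V c_V=0$ and $\pi_U\textbf{R}\,\widetilde c=\sum_j c_V(j)\pi_U R_j=0$, hence $\widetilde c\in\mathcal C$; moreover $p[\widetilde c]=c_V$ since the $\overline R_j$ are $1$-$1$, and $\widetilde c$ contains no reaction with zero-$V$-reduction because its reactions project onto the nonzero reactions $\overline R_j\in\mathcal C_V$. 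This contradicts the assumption that every $c\in\mathcal C$ with $p[c]=c_V$ contains a reaction with zero-$V$-reduction. Consequently there is $\ell\in U$ with $a_\ell(c_V)\neq 0$, and since some lift satisfies $R_j(\ell)\neq 0$ with $\pi_V R_j\in\mathcal C_V$, definition \eqref{eq:states in cycles} gives $\ell\in U\cap\mathcal D(\mathcal C_V)$.

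Finally, if $n_U$ has the form \eqref{eq:conc at equilibirum} we are in case (1). Otherwise I would construct, for every $\delta>0$, the perturbed rate function $\mathcal K_\delta$ exactly as in Proposition~\ref{prop: instability of (DB) different cycles}: take $\mathcal K_\delta(R)=\mathcal K(R)$ for $R\in\mathcal R_s$ and $\mathcal K_\delta(-R)=\mathcal K_\delta(R)\,e^{\sum_{i\in\Omega}R(i)(E(i)+\eta\,e_\ell(i))}$, where $e_\ell\in\mathbb R^N$ is the $\ell$-th canonical basis vector and $\eta>0$ is chosen small enough that $\|\mathcal K-\mathcal K_\delta\|<\delta$. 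Then $E+\eta\,e_\ell$ is an energy of $(\Omega,\mathcal R,\mathcal K_\delta)$, so this perturbed system still satisfies detailed balance and the equivalence of the second paragraph applies to it; evaluating that condition for $(\Omega,\mathcal R,\mathcal K_\delta)$ at $c=c_V$ yields $\sum_{s\in U}(E(s)+\eta\,e_\ell(s)+\log n_s)a_s(c_V)=\eta\,a_\ell(c_V)\neq 0$, where the part without $\eta$ vanished by the hypothesis on the original reduced system. Hence $(V,\mathcal R_V,\mathcal K_\delta[n_U])$ does not satisfy the detailed balance property, which is case (2). I expect the main obstacle to be precisely the contradiction argument of the third paragraph, where one has to match the ``trivial lift'' $\widetilde c$ of $c_V$ with an actual element of $\mathcal C$, keeping careful track of the bookkeeping between $\mathcal R$, $\mathcal R_s$, the reduced reactions, and the projection $p$ of \eqref{projection of cycles}.
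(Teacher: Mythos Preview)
Your proposal is correct and follows essentially the same strategy as the paper: reduce the detailed balance of $(V,\mathcal R_V,\mathcal K[n_U])$ to a linear condition on $(E(s)+\log n_s)_{s\in U}$ along $c_V$, use the hypothesis on zero-$V$-reductions to exhibit an index $\ell\in U$ with nonzero coefficient via the ``trivial lift'' $\tilde c$, and then perturb the energy at $\ell$ exactly as in Proposition~\ref{prop: instability of (DB) different cycles}. The paper's proof differs only tactically: it first passes to a preimage $c\in\mathcal C$ with $p[c]=c_V$, splits the energy sum over $c$ into the reactions with $\pi_V R_j=0$ and $\pi_V R_j\neq 0$, and then runs the same contradiction and perturbation; the resulting key index $\ell$ is characterised there by $\sum_{\{j:\pi_V R_j=0\}}c(j)R_j(\ell)\neq 0$, which is the negative of your $a_\ell(c_V)$ since $c\in\ker\textbf{R}$. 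Your route has the minor advantage of not needing to assume at the outset that some $c\in\mathcal C$ with $p[c]=c_V$ exists (the paper's opening line ``there exist reactions in $\mathcal C$ with zero-$V$-reduction'' tacitly uses this), whereas your contradiction argument manufactures such a $\tilde c$ directly and so treats the case $c_V\notin p(\mathcal C)$ uniformly.
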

\begin{proof}
    By assumption we know that there exist a reactions in $\mathcal C$ with zero-$V$-reduction. Therefore, there exists a $k \in \{ 1, \dots , r /2\}  $ such that the reaction $R_k = \textbf{R} e_k$ is such that $R_k \in c $ (i.e. $c(k) \neq 0$) for $c \in \mathcal C $ and $\pi_V R_k=0$. 
    As a consequence, the detailed balance property of the kinetic system $(\Omega, \mathcal R, \mathcal K) $ implies that for every $c \in \mathcal C$ we have that
    \begin{align*}
   0&=     \sum_{j=1}^{r/2} c(j) \mathcal E(R_j) =    \sum_{ \{ j: \pi_V R_j =0\} } c(j) \mathcal E(R_j) +   \sum_{ \{ j: \pi_V R_j \neq 0 \} } c(j) \mathcal E(R_j) \\
   &=    \sum_{ \{ j: \pi_V R_j = 0 \} } c(j) \sum_{i \in U } E(i) R_j(i) +   \sum_{ \{ j: \pi_V R_j \neq 0 \} } c(j) \mathcal E(R_j).
    \end{align*}
    As a consequence we have 
    \begin{equation} \label{eq with zeros}
  \sum_{ \{ j: \pi_V R_j = 0 \} } c(j) \sum_{i \in U } E(i) R_j(i) =-   \sum_{ \{ j: \pi_V R_j \neq 0 \} } c(j) \mathcal E(R_j).
    \end{equation}
    Since the reactions in the cycle $\mathcal C_V $ are $1$-$1$ we have that $p[c] \in \mathbb R^{|\{  i \in \{ 1, \dots, r/2  \}: \pi_V R_i \neq 0 \} | } $ and $p[c] (i) = c (i) $ for every $i $ such that $\pi_V R_i \neq 0.$ Recall that $p[c] \in \mathcal C_V $ and recall that by assumption we know that $p[c] \neq 0$.
    As a consequence if we assume that the kinetic system $(V, \mathcal R_V, \mathcal K [n_U]) $ satisfies the detailed balance condition, then we must have that 
    \[ 
     \sum_{ \{ j: \pi_V R_j \neq 0 \} } c(j) \mathcal E(\pi_V R_j)=0.
    \] 
  Notice that this implies that 
  \[
 0=\sum_{ \{ j: \pi_V R_j \neq 0 \} } c(j) \mathcal E(\pi_V R_j)= \sum_{ \{ j: \pi_V R_j \neq 0 \} } c(j) \mathcal E(R_j)  - \sum_{ \{ j: \pi_V R_j \neq 0 \} } c(j)\sum_{i \in U} \log(n_i) R_j(i).
  \]
  Hence we have that
    \[
\sum_{ \{ j: \pi_V R_j \neq 0 \} } c(j) \mathcal E(R_j)  = \sum_{ \{ j: \pi_V R_j \neq 0 \} } c(j)\sum_{i \in U} \log(n_i) R_j(i)
  \]
  This together with equality \eqref{eq with zeros} implies that \begin{align*} 
  \sum_{ \{ j: \pi_V R_j \neq 0 \} } c(j)\sum_{i \in U} \log(n_i) R_j(i) &= \sum_{ \{ j: \pi_V R_j \neq 0 \} } c(j) \mathcal E(R_j) = - \sum_{ \{ j: \pi_V R_j = 0 \} } c(j) \mathcal E(R_j) \\
  &= - \sum_{ \{ j: \pi_V R_j = 0 \} } c(j) \sum_{i \in \Omega } E(i) R_j(i) = - \sum_{ \{ j: \pi_V R_j = 0 \} } c(j) \sum_{i \in U } E(i) R_j(i). 
  \end{align*}
  As a consequence in order to have that the reduced kinetic system satisfies the detailed balance property we need to have 
  \begin{equation} \label{3rd DB}
    \sum_{ \{ j: \pi_V R_j \neq 0 \} } c(j)\sum_{i \in U} \log(n_i) R_j(i)  + \sum_{ \{ j: \pi_V R_j = 0 \} } c(j) \sum_{i \in U } E(i) R_j(i)=0. 
  \end{equation}
  Assume that $n_i$ is not given by \eqref{eq:conc at equilibirum}. Then we can construct a perturbed kinetic system that is such that $(V, \mathcal R_V, \mathcal K_\delta )$ satisfies the detailed balance property but the reduced system does not satisfy \eqref{3rd DB} and hence does not satisfy the detailed balance property.
To this end we can argue exactly as in the proof of Proposition \ref{prop: instability of (DB) different cycles}. 
Indeed, we can define the perturbed rate function $\mathcal K_\delta : \mathcal R \rightarrow \mathbb R_+ $ as $\mathcal K_\delta (R)=\mathcal K (R)$ for every $R \in \mathcal R_s $ and  \[
\mathcal K_\delta (-  R)  =\mathcal K_\delta ( R) e^{\sum_{i \in \Omega }  R(i) (E(i)+ E_\delta (i))} =\mathcal K(-R)  e^{\sum_{i \in \Omega }  R(i)  E_\delta (i)} 
\] 
for every $R \in \mathcal R_s$. Here we have that $E_\delta (i)=0$ for every $i \neq \ell $ and $E_\delta (\ell) <  \frac{\delta}{\max_{R \in \mathcal R_s} |R(\ell)|\max_{ R \in \mathcal R } \mathcal K(R)}$ for $\ell \in U$ such that 
\[ \sum_{ \{ j: \pi_V R_j = 0 \} } c(j)  R_j(\ell) \neq 0.
\]
Notice that such an $\ell \in U $ exists. Indeed assume by contradiction that for every $ i \in U $
\[ 
\sum_{ \{ j: \pi_V R_j = 0 \} } c(j)  R_j(i) = 0. 
\]
This implies that $
\sum_{ \{ j: \pi_V R_j \neq 0 \} } c(j)  R_j(i) = 0$. Hence (upon reordering the reactions), the vector $\tilde{c}\in \mathbb R^{r/2}$ defined as $\tilde c(i)=c(i) $ for every $ i$ such that $ \pi_V R_i \neq 0$ and such that $\tilde c (i)=0$ otherwise is a cycle, i.e. it belongs to $ \mathcal C $ and is such that $p[\tilde c]=c$. This contradicts the assumption of the Proposition. 

As in Proposition \ref{prop: instability of (DB) different cycles} we prove that the rate function $\mathcal K_\delta $ satisfies the assumptions of the Proposition. Moreover, this new rate function does not satisfy \eqref{3rd DB} with respect to the perturbed energy $\overline E= E+ E_\delta$ indeed 
  \begin{align*}
    \sum_{ \{ j: \pi_V R_j \neq 0 \} } c(j)\sum_{i \in U} \log(n_i) R_j(i)  + \sum_{ \{ j: \pi_V R_j = 0 \} } c(j) \sum_{i \in U } (E(i)+ E_\delta(i) ) R_j(i)= E_\delta(\ell) \sum_{ \{ j: \pi_V R_j = 0 \} } c(j)  R_j(\ell) . 
  \end{align*}
  Now recall that $ \sum_{ \{ j: \pi_V R_j = 0 \} } c(j)  R_j(\ell) \neq 0$. 
\end{proof}

We conclude by summarizing the results obtained in this section. 
In Theorem \ref{thm:same cycles implies db} we state some sufficient conditions that guarantee that if the non reduced kinetic system satisfies the detailed balance property, then also the reduced kinetic system satisfies the detailed balance property in a stable manner. 
In Proposition \ref{prop: instability of (DB) no one to one} we have proven that the fact that the cycles of the reduced system contain only reactions that are $1$-$1$ is a necessary condition for the stability of the detailed balance property of the reduced kinetic system. However this property is not sufficient to have that the reduced kinetic system satisfies the detailed balance property in a stable way. This is shown by Example \ref{ex:zero no Db}. 

In Proposition \ref{prop: instability of (DB) one to one and zeros}, we find another necessary condition for the stability of the detailed balance property in the reduced system. This necessary condition is the following. If a cycle $c_V$ of the reduced system can be written as the projection $p[c]$ of a cycle $c$ of the non reduced kinetic system, then $c$ does not contain reactions that project to zero.
Finally in Proposition 
\ref{prop: instability of (DB) different cycles} we prove that the fact that the projection of the cycles of the non reduced system is equal to the set of the cycles of the reduced system is also a necessary condition for the stability of the detailed balance property. 
Summarizing in this section we find three necessary and sufficient conditions conditions to obtain a reduced kinetic system that satisfies the detailed balance property in a stable manner (i.e. stable under small perturbation of the chemical rates or of the frozen concentrations).

\section{Completion of a kinetic system} \label{sec:completion}
As already mentioned in the previous sections, every kinetic system that does not exchange matter with the environment must satisfy the detailed balance property. However, as explained in the previous section, kinetic systems that do not satisfy the detailed balance property can be obtained as a result of the reduction of a kinetic system with detailed balance. 
In this section we characterize the kinetic systems that do not satisfy the detailed balance condition and that are obtained by the reduction of closed kinetic system. In particular, as expected, in Theorem \ref{thm:completion} we prove that every kinetic system that is not closed can be obtained as the reduction of a closed kinetic system.
Notice that in the literature there are many examples of chemical systems that are not closed, see for instance the models of adaptation in \cite{ferrell2016perfect}.

We then consider the case in which some reactions of the open kinetic system cannot be modified, for instance because the details of these reactions are known. 
In this case we prove that it is possible to obtain this open kinetic system as a reduction of a larger closed kinetic system (its completion) if  the reactions that cannot be modified are not part of the cycles of the reduced system. 
We also prove that if all the reactions in the cycles of the kinetic system cannot be modified then the kinetic system does not admit a closed completion. 
However, we stress that we do not prove that an open kinetic system admits a closed completion if and only if all the reactions in its cycles can be modified. Indeed as shown in Example \ref{ex: 1 free reaction is enough}, in some cases, it is enough to be able to modify one single reaction for each cycle to be able to construct a closed completion of the open kinetic system.  

\begin{definition}[Completion of a chemical network and of a kinetic system]
   Let $(\Omega, \mathcal R) $ be a bidirectional chemical network. 
A completion of $(\Omega, \mathcal R) $ is a chemical network $(\Omega_c, \mathcal R_c) $ such that 
\begin{enumerate}
\item $\Omega \subset \Omega_c$; 
\item $\mathcal R= \{ \pi_{\Omega } R : R \in \mathcal R_c \} $. 
\end{enumerate}
A completion of a kinetic system $(\Omega, \mathcal R, \mathcal K)$ is a kinetic system $(\Omega_c, \mathcal R_c, \mathcal K_c) $ that is such that $(\Omega_c, \mathcal R_c)$ is a completion of the chemical network $(\Omega, \mathcal R) $ and there exists a set of concentrations $n_{\Omega_c \setminus \Omega  }:={(n_i)}_{i \in \Omega_c \setminus \Omega  }  $, where $\mathcal K = \mathcal K_c [n_{\Omega_c \setminus \Omega }]$. 
\end{definition}

\subsection{Completion without constraints}
In this section we prove that every open kinetic system $(\Omega, \mathcal R, \mathcal K)$ admits a closed completion.
To this end we first prove that the chemical network $(\Omega , \mathcal R)$ admits a completion $(\Omega_c, \mathcal R_c)$ that is conservative, that is such that every reaction satisfies \eqref{no sources/sinks} and that does not have cycles. Notice that this implies that for every choice of rate function $\mathcal K_c$ we have that $(\Omega_c, \mathcal R_c, \mathcal K_c)$ is closed. Hence every  kinetic system $(\Omega, \mathcal R, \mathcal K)$ admits a closed completion. 

\begin{proposition} \label{prop:remove cycles}
    Assume that $(\Omega, \mathcal R) $ is a bidirectional chemical system. 
Then there exists a completion $(\Omega_c , \mathcal R_c) $ of  $(\Omega, \mathcal R) $ that is conservative, is such that every reaction $R \in \mathcal R_c$ satisfies \eqref{no sources/sinks}, that does not have cycles, i.e. $\mathcal C_c = \{ 0\} $.
Moreover this completion $(\Omega_c , \mathcal R_c) $ is such that every $R \in \mathcal R_c$ is a reaction with non zero-$\Omega $-reduction and every $R \in \mathcal R$ is a $1$-$1$ reduction. 
\end{proposition}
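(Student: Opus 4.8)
The plan is to build the completion by hand, attaching to each reaction a \emph{private} pair of fresh substances --- one that the reaction consumes and one that it produces, occurring in no other reaction. Write $\mathcal R_s = \{R_1, \dots, R_s\}$ with $s = |\mathcal R_s| = r/2$ (using bidirectionality), and set $N_c := N + 2s$. Introduce $2s$ new substances $a_1, b_1, \dots, a_s, b_s$, let $\Omega_c := \Omega \cup \{a_1, b_1, \dots, a_s, b_s\}$, and for each $j$ define $\tilde R_j \in \mathbb Z^{|\Omega_c|}$ by $\pi_\Omega \tilde R_j = R_j$, $\tilde R_j(a_j) = -1$, $\tilde R_j(b_j) = 1$, and $\tilde R_j = 0$ on all other new coordinates. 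Set $\mathcal R_c := \{\tilde R_j, -\tilde R_j : j = 1, \dots, s\}$. By construction $\mathcal R_c$ is bidirectional, $\Omega \subset \Omega_c$, and $\{\pi_\Omega R : R \in \mathcal R_c\} = \{\pm R_j : j\} = \mathcal R$, so $(\Omega_c, \mathcal R_c)$ is a completion of $(\Omega, \mathcal R)$; the standing hypotheses on chemical networks ($I(R) \cap F(R) = \emptyset$, $D(R) \neq \emptyset$, every substance in some reaction) hold because the $a_j, b_j$ are fresh and each lies in $D(\tilde R_j)$.

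Next I would verify the four structural properties, all of which are immediate from the construction. Since $a_j \in I(\tilde R_j)$ and $b_j \in F(\tilde R_j)$, every reaction of $\mathcal R_c$ satisfies \eqref{no sources/sinks}. Since $\pi_\Omega \tilde R_j = R_j \neq 0$, no reaction of $\mathcal R_c$ has zero-$\Omega$-reduction. For the one-to-one property, any reaction of $\mathcal R_c$ that projects onto $R_j$ equals $\tilde R_k$ or $-\tilde R_k$ for some $k$, and $\pi_\Omega(\tilde R_k) = R_k$, $\pi_\Omega(-\tilde R_k) = -R_k$; neither equals $R_j$ unless $k = j$ and we take $\tilde R_j$ itself, because no two elements of $\mathcal R_s$ coincide or are opposite (and $R_j \neq -R_j$). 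Hence $|\pi_\Omega^{-1}(R_j)| = 1$, and likewise $|\pi_\Omega^{-1}(-R_j)| = 1$, so every element of $\mathcal R$ is a $1$-$1$ reduction. Finally, let $\textbf R_c$ be the matrix of the reactions of $(\Omega_c, \mathcal R_c)$ as in \eqref{matrix reactions}: its submatrix on the rows indexed by $a_1, \dots, a_s$ is diagonal with nonzero diagonal entries, since $a_j$ occurs only in the column coming from $\tilde R_j$; therefore $\ker(\textbf R_c) = \{0\}$, i.e.\ $\mathcal C_c = \{0\}$.

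It remains to check conservativity, which is the only step with a computation, and it is easy precisely because the fresh substances leave enough slack in any conservation law. I would write one down directly: set $m_i := 1$ for $i \in \Omega$, put $\delta_j := \sum_{i \in \Omega} R_j(i)$, and choose $m_{a_j} := 1 + \max(0, \delta_j)$ and $m_{b_j} := 1 + \max(0, -\delta_j)$, all of which are strictly positive. Then $m^T \tilde R_j = \delta_j - m_{a_j} + m_{b_j} = \delta_j - \max(0, \delta_j) + \max(0, -\delta_j) = 0$ for every $j$, and the same holds for $-\tilde R_j$, so $m$ is a conservation law of $(\Omega_c, \mathcal R_c)$ with all entries strictly positive; hence $(\Omega_c, \mathcal R_c)$ is conservative. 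In particular, since $\mathcal C_c = \{0\}$, condition $3$ of Lemma \ref{lem:Db equivalence} holds vacuously, so $(\Omega_c, \mathcal R_c, \mathcal K_c)$ is closed for every rate function $\mathcal K_c$ --- the observation used by the completion theorem that follows.

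I do not expect a real obstacle: the construction is essentially forced once one notices that a single private input/output pair per reaction simultaneously makes the submatrix of $\textbf R_c$ on the new rows invertible (so all cycles disappear), supplies every reaction with a nonempty initial and final set, and leaves two free weights per reaction in a conservation law (so positivity can always be arranged); the rest is bookkeeping. The one point meriting care is the one-to-one claim, which rests on the defining feature of $\mathcal R_s$ that it contains exactly one reaction from each reversible pair.
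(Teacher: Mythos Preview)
Your proof is correct, and it takes a genuinely different route from the paper's. The paper builds the completion in three separate passes: first it adds a pair of fresh substances only to those reactions that are sources or sinks; then it mirrors the coordinates that fail to appear in any conservation law, to force conservativity; finally it iteratively breaks each cycle by choosing a row of the reaction matrix that two reactions of the cycle share and splitting that row into two new rows. Each step is designed to preserve the properties already secured.

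Your construction instead attaches a private input/output pair to \emph{every} reaction in one shot, which simultaneously forces the no-sources/sinks condition, makes the $a$-rows of $\textbf{R}_c$ a (signed) identity block so the kernel vanishes, and leaves two free positive weights per column so an explicit strictly positive conservation vector can be written down directly. This is shorter and more uniform than the paper's argument; the paper's stepwise approach has the mild advantage that it modifies only the reactions that genuinely need it (a point the authors themselves note in a remark following the theorem), but that economy is not required by the statement. The one-to-one and nonzero-reduction claims are immediate in both approaches; your handling of them via the defining property of $\mathcal R_s$ is clean.
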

\begin{proof}
    The proof of this proposition is structured as follows. 
As a first step we consider a chemical network that has sources and sinks, hence that does not satisfy \eqref{no sources/sinks}. We explain how to complete this system by adding new substances and to obtain a completion $(\Omega_q, \mathcal R_q) $ whose reactions satisfy \eqref{no sources/sinks}. 
As a second step we assume that $(\Omega_q, \mathcal R_q ) $ is non-conservative and show how to complete it to obtain a completion $(\Omega_e, \mathcal R_e)$ that is conservative. Moreover the way in which the completion is constructed guarantees that no sources and sinks are produced in the completion, hence $(\Omega_e, \mathcal R_e)$ does not have sources and sinks. 
As a third step we show how to complete the chemical network $(\Omega_e, \mathcal R_e)$ to obtain a chemical network $(\Omega_c, \mathcal R_c) $ that has no cycles. Also in this case we construct the completion in such a way that no sources and sinks are produced and in such a way that $(\Omega_c, \mathcal R_c) $ is conservative. 

\textbf{Step 1: Constructing a completed kinetic system that does not have sources and sinks.}
Assume that $(\Omega, \mathcal R) $ is such that the set $Q \subset \mathcal  R $ defined as
\[
Q:= \{ R \in \mathcal R : I(R) = \emptyset \text{ or } F(R)=\emptyset \}
\]
is not empty. Let us define $q:=|Q|$. 
Without loss of generality we consider a reaction $\overline R \in Q$ such that either $I(\overline R)=0$ or $F(\overline R)=0$. 
We define the extended set of substances $\Omega_1 :=\Omega \cup \{N+1, N+2 \} $. 
The set of extended reactions is defined as 
\[
\mathcal R_1:= \left\{ \left( \begin{matrix} R \\
 0 \\
 0
\end{matrix} \right): R \in \mathcal R \text{ s.t. } R \neq \overline R  \text{ and }  -R \neq \overline R  \right\} \cup \left( \begin{matrix} \overline R \\
 \alpha \\
 -\alpha 
\end{matrix} \right) \cup \left( \begin{matrix} -\overline R \\
- \alpha \\
 \alpha 
\end{matrix} \right)
\]
for some $\alpha \neq 0 $. 
Notice that by construction we have that the reaction $\overline R_q := (\overline R , \alpha , - \alpha )^T$ is such that $\pi_\Omega \overline R_q = \overline R $ and is such that $I(\overline R_q) \neq 0 $ and $F(\overline R_q) \neq 0$.
We can iterate the procedure adding two substances to the network for every reaction $R \in \mathcal R$ that is such that either $I(R)=\emptyset $ or $F(R)=\emptyset $.
We obtain a chemical network $(\Omega_q, \mathcal R_q) $ that does not contain sources and sinks.
Notice that we have that $\Omega_q= \Omega \cup \{ N+1, N+2q\} $.  
We stress that for every $R \in \mathcal R_q $ we have that $\pi_{\Omega} R \neq 0 $. Moreover if $R_1, R_2\in \mathcal R_q $ are such that $R_1 \neq R_2 $, then $\pi_{\Omega } R_1 \neq \pi_{ \Omega } R_2 $.

\textbf{Step 2: Constructing a conservative network.}
Assume that  $(\Omega_q, \mathcal R_q) $ is non-conservative. We denote with $\mathcal M_q $ the set of the conservation laws of $(\Omega_q, \mathcal R_q) $.  We now explain how to complete it in order to obtain a conservative system.
Since $(\Omega_q , \mathcal R_q )$ is non-conservative, the set $B \subset \Omega_q$ defined as
 \[
 B:= \left\{ u \in \Omega : m (u)=0 \text{ for every } m \in \mathcal M_q \right\}
 \] 
 is such that $B \neq \emptyset$. Let $ b = |B|$. 
 
Upon reordering of the elements in $\Omega_q$  we have that $B=\{ N+2q-b +1 , \dots,  N +2q\}$.
We define $A:= \Omega_q \setminus B$. 
Notice that every
$R \in \mathcal R_q $ can be written as $ R=\left(\begin{matrix} \pi_A R \\ \pi_B R
 \end{matrix}\right) $. 
 We define the set of extended substances as $\Omega_e = \Omega_q \cup \{ N+2q+1, \dots, N+b+2q \}  $ and we construct the set of the extended reactions as follows 
 \[
\mathcal R_e := \left\{  R_e = \left(\begin{matrix} \pi_A R \\ \pi_B R  \\ - \pi_B R
 \end{matrix}\right) \in \mathbb R^{N+2q +b}: R \in \mathcal R \right\}.
 \]
 Notice that for every $R \in \mathcal R_e $ we have that $\pi_{\Omega_q} R \neq 0 $. Moreover if $R_1, R_2\in \mathcal R_e $ are such that $R_1 \neq R_2 $, then $\pi_{ \Omega_q } R_1 \neq \pi_{ \Omega_q } R_2 $.
 Moreover, each vector $m \in\mathcal M_q $ can be written as 
$ m= \left(\begin{matrix} \pi_A m \\ \textbf{0}_b
 \end{matrix}\right)$. 
Let us define the set of vectors
\[
 M_e := \left\{ m_e =\left( \begin{matrix} \pi_A m  \\ \textbf{1}_{b}\\ \textbf{1}_{b}
 \end{matrix}\right) \in \mathbb R^{N +2q+ b } :m \in \mathcal M_q  \right\}.
 \] 
Now notice that by definition $ M_e \subset \operatorname{span} \{ R_e: R_e \in \mathcal R_e \}^{\perp}$. As a consequence, the chemical network $(\Omega_e, \mathcal R_e ) $, where $\Omega_e= \Omega \cup \{N+2q+1, \dots, N+2q+b \} $ is conservative. Moreover, notice that by construction $(\Omega_e, \mathcal R_e ) $ satisfies \eqref{no sources/sinks}.

\textbf{Step 3: Constructing a completed kinetic system that does not have cycles.}
Assume that the chemical network $(\Omega, \mathcal R ) $ contains some cycles. Then the kinetic system $(\Omega_e , \mathcal R_e ) $ obtained in Step 1 and Step 2 is conservative, it satisfies \eqref{no sources/sinks}, but it might have cycles. 
Consider a basis $\{ c_1, \dots, c_d  \}$ of the space of the cycles $ \mathcal C_e $ of the chemical network $(\Omega_e , \mathcal R_e ) $ and consider a vector $c_i$ of the basis. By definition we have that $\textbf{R}_e c_i =\textbf{0} $. This implies that there exist $k, j $ with $k \neq j $ such that $c_i(k) , c_i(j) \neq 0 $ and a row $\zeta^T \in \mathbb R^{r/2}$ of the matrix $\textbf{R}_e $ such that $\zeta (j) \neq 0 $ and $\zeta(k) \neq  0 $. Here we used the notation $r=|\mathcal R_e|$.

We consider the vector $W_1 \in \mathbb R^{r/2} $ such that $W_1(k) = \zeta (k) $ and $W_1(\ell) =0$  for every $ \ell \neq k $. On the other hand we define $W_2 $ as $\zeta - W_1 $. Notice that by construction $W_1^T c_i \neq 0 $ and $W_2^T c_i \neq 0$.
Therefore the matrix 
\[
\textbf{R}_c =\left(  \begin{matrix}
  \textbf{R}_e \\
 W_1^T \\
   W_2^T \\
\end{matrix} \right) 
\]
is such that $\textbf{R}_c c_i \neq 0$. The set of reactions $\mathcal R_e $ induced by the matrix $\textbf{R}_e$ is such that for every $R \in \mathcal R_e $ we have that $\pi_{\Omega_e} R \neq 0 $. Moreover if $R_1, R_2\in \mathcal R_c $ are such that $R_1 \neq R_2 $, then $\pi_{\Omega_e  } R_1 \neq \pi_{\Omega_e} R_2 $.
Therefore the dimension of the space of the cycles of the chemical network $(\Omega_e \cup \{ N+2q+b,   N+2q+b+2\}, \mathcal R_c) $ is $1 $ less than the dimension of the space of the cycles of $(\Omega_e, \mathcal R_e)$. 
Moreover notice that the network that we construct with this procedure is still conservative. Indeed, without loss of generality let us assume that the vector $\zeta $ is the $ i$-th row of $\textbf{R}_e$, hence $\zeta = e_i^T \textbf{R}_e $. Since $(\Omega_e, \mathcal R_e) $ is conservative we have that there exists $m_e \in \mathcal M_e $ such that $m_e(i ) \neq 0 $.  
Therefore, by construction  the vector $(m_c , \frac{1}{4}m_e(i) , \frac{1}{4} m_e(i) )\in \mathbb R^{N+ 2 q+b+2 } $, where $m_c (j)=m_e(j) $ for every $j \neq i $ and $m_c(i) =\frac{1}{2} m_e(i)$  is a conservation law for $(\Omega_c, \mathcal{R}_c )$.  Finally notice that the kinetic system satisfies \eqref{no sources/sinks} by construction. 
Iterating this process we can remove all the cycles and the desired conclusion follows. 
\end{proof}

\begin{theorem} \label{thm:completion}
Assume that $(\Omega, \mathcal R, \mathcal K) $ is a bidirectional kinetic system. 
Then $(\Omega, \mathcal R, \mathcal K) $ admits a closed completion $(\Omega_c, \mathcal R_c, \mathcal K_c) $ that is such that every $R \in \mathcal R_c$ is a reaction with non zero-$\Omega $-reduction and every $R \in \mathcal R$ is a $1$-$1$ reduction.
\end{theorem}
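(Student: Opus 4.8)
The plan is to reduce everything to Proposition \ref{prop:remove cycles} together with a simple pull-back of the rate function. By that proposition, the underlying bidirectional chemical network $(\Omega, \mathcal R)$ admits a completion $(\Omega_c, \mathcal R_c)$ that is conservative, whose reactions all satisfy \eqref{no sources/sinks}, that has no cycles (so $\mathcal C_c = \{0\}$), and for which every $\overline R \in \mathcal R_c$ has non-zero $\Omega$-reduction while every $R \in \mathcal R$ is a $1$-$1$ reduction. The first observation is that the absence of cycles already forces detailed balance: by the circuit condition, Lemma \ref{lem:Db equivalence}(3), when $\mathcal C_c = \{0\}$ the requirement \eqref{eq:cycles and DB} is vacuous, so $(\Omega_c, \mathcal R_c, \mathcal K_c)$ satisfies the detailed balance property for \emph{every} bidirectional rate function $\mathcal K_c$. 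Combined with conservativity and \eqref{no sources/sinks}, this means that whatever positive rate function we place on $(\Omega_c, \mathcal R_c)$, the resulting kinetic system is closed.

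It therefore remains only to choose $\mathcal K_c$ and a vector of frozen concentrations $n_{U}$, with $U := \Omega_c \setminus \Omega$, so that $\mathcal K = \mathcal K_c[n_U]$. Since every $\overline R \in \mathcal R_c$ satisfies $\pi_\Omega \overline R \neq 0$, the map $\overline R \mapsto \pi_\Omega \overline R$ carries $\mathcal R_c$ onto $\mathcal R$, and since every $R \in \mathcal R$ is a $1$-$1$ reduction this map is a bijection; hence the sum in \eqref{reduced rates} defining the reduced rate of a reaction $R \in \mathcal R$ contains exactly one term, namely $\hat K_{\overline R}$ with $\overline R$ the unique preimage of $R$. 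Choosing $n_s = 1$ for every $s \in U$ makes the correction factor $\prod_{s \in U \cap I(\overline R)} n_s^{-\overline R(s)}$ in \eqref{Khat} equal to $1$, so $\hat K_{\overline R} = \mathcal K_c(\overline R)$; likewise $\hat K_{-\overline R} = \mathcal K_c(-\overline R)$. Thus, defining $\mathcal K_c(\overline R) := \mathcal K(\pi_\Omega \overline R)$ for every $\overline R \in \mathcal R_c$ — a legitimate positive, bidirectional rate function, since $\pi_\Omega$ sends reversed reactions to reversed reactions — yields $\mathcal K_c[n_U](R) = \mathcal K(R)$ for every $R \in \mathcal R$, i.e. $\mathcal K_c[n_U] = \mathcal K$.

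Putting the two steps together, $(\Omega_c, \mathcal R_c, \mathcal K_c)$ is a completion of $(\Omega, \mathcal R, \mathcal K)$ in the sense of the definition (with frozen concentrations $n_U \equiv 1$), it is closed, and by construction it inherits the asserted topological properties from Proposition \ref{prop:remove cycles}. The only genuine work is packaged in Proposition \ref{prop:remove cycles} itself — the three-step construction that first eliminates sources and sinks, then restores conservativity, then kills all cycles, each time taking care not to reintroduce a defect already repaired; given that proposition, the present theorem is a short verification. I do not expect an obstacle beyond keeping the reduction bookkeeping clean: identifying which reaction of $\mathcal R_c$ projects to which reaction of $\mathcal R$, and recording that the $1$-$1$ property collapses the sum in \eqref{reduced rates} to a single term so that the unit frozen concentrations pull $\mathcal K_c$ back exactly to $\mathcal K$.
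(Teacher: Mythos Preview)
Your proof is correct and follows essentially the same approach as the paper: invoke Proposition \ref{prop:remove cycles} to obtain a cycle-free, conservative, source/sink-free completion, note that the circuit condition is vacuous so any rate function yields a closed system, then pull back $\mathcal K$ via the bijection $\overline R \mapsto \pi_\Omega \overline R$ with frozen concentrations $n_U \equiv 1$. Your write-up is in fact more explicit than the paper's about why the $1$-$1$ property collapses the sum in \eqref{reduced rates} to a single term, which is a helpful clarification.
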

\begin{proof}
Proposition \ref{prop:remove cycles} implies that the chemical network $(\Omega, \mathcal R) $ admits a completion $(\Omega_c, \mathcal R_c) $ that is conservative, that is such that every $ R \in \mathcal R_c $ satisfy \eqref{no sources/sinks} and that does not have cycles, hence $\mathcal C_c \neq \{ 0\} $.  
As a consequence for every rate function $\mathcal K_c $ we have that $(\Omega_c, \mathcal R_c, \mathcal K_c) $ is closed, indeed since it has no cycles the circuit condition in Lemma \ref{lem:Db equivalence} holds independently on the reaction rates. In particular the kinetic system $(\Omega_c,\mathcal R_c, \mathcal K) $ is closed and is such that $\mathcal K[n_U] = \mathcal K $ for $n_{\Omega_c \setminus \Omega } = \textbf{1} $. Hence $(\Omega_c,\mathcal R_c, \mathcal K) $ is a closed completion of $(\Omega,\mathcal R, \mathcal K) $. 
\end{proof}
\begin{remark}
    Notice that to prove Theorem \ref{thm:completion} we prove that $(\Omega_c,\mathcal R_c, \mathcal K) $ is a closed completion of $(\Omega,\mathcal R, \mathcal K) $. However we also have the same statement for every rate function $\mathcal K_c $ defined as 
    \[
    \mathcal K_c(R) = \mathcal K(\pi_\Omega R) \prod_{i\in I(R) \cap \Omega_c \setminus \Omega } n_i^{R(i)}, \quad  \forall R\in \mathcal R_c
    \]
    for any vector $n_{\Omega_c \setminus \Omega}$. 
\end{remark}

\begin{remark}
   In Proposition \ref{prop:remove cycles} we construct a completion for the chemical network $(\Omega , \mathcal R)$. The completion that we construct does not contain cycles. We then use this completion to prove Theorem \ref{thm:completion}. In particular we use the fact that a kinetic system without cycles satisfies the detailed balance property. However notice that the proof could be optimized. Indeed we could obtain a completion that is closed adding less substances and modifying less reactions. For example, we could construct a completion were we remove only the cycles of the kinetic system $(\Omega , \mathcal R, \mathcal K ) $ on which the circuit condition in Lemma \ref{lem:Db equivalence} does not hold. 
\end{remark}

\subsection{Completion with constraints}
We now consider the case of a kinetic system in which some reactions cannot be modified and study under which conditions it is still possible to obtain a completed kinetic system that is closed.
Here when we say that we cannot modify a reaction $R$ we mean that the only possible completion $\overline R$ of the reaction $R$  has the form $ \left( \begin{matrix}
R \\
\textbf{0}
\end{matrix}  \right) $. In other words the completion of $R $ involves exactly the same substances as $R$, i.e $ D(R )= D(\overline R) $. 

\begin{definition}[Admissible completions]
    Assume that $(\Omega, \mathcal R) $ is a chemical network. 
    Let $\mathcal R_a \subset \mathcal R$. We say that a completion $(\Omega_c, \mathcal R_c) $ of $(\Omega, \mathcal R) $ is $\mathcal R_a$-admissible if 
    \[
    \pi_{\Omega_c \setminus \Omega } R =0 \quad  \forall R \in \mathcal R_c \text{ s.t. } \pi_{ \Omega } R \in \mathcal R_a
    \]
    We say that $\mathcal R_a$ is the set of constrained reactions. 
\end{definition}

In the following proposition we formulate the assumptions on the set of constrained reactions $\mathcal R_a$ that guarantee that a kinetic system admits a closed completion. 
These conditions are that the constrained reactions do not belong to the cycles of the kinetic system that we want to complete, that the admissible reactions are not sources or sinks and that all the substances that appear in the constrained reactions appear in a conservation law. 
\begin{proposition}
    Assume that $(\Omega, \mathcal R, \mathcal K) $ is a bidirectional kinetic system. Let $\mathcal R_a \subset \mathcal R$ be such that $\mathcal R_a \cap \mathcal C = \emptyset$.  Assume that for every $R \in \mathcal R_a $ we have that $\eqref{no sources/sinks} $ holds. Moreover, assume that for every $R \in \mathcal R_a$ we have that $\pi_A R =0 $ where 
    \[
    A:= \{ i \in \Omega : m(i)=0 \text{ for every } m \in \mathcal M\}. 
    \]
Then, there exists a $\mathcal R_a$-admissible closed completion of $(\Omega, \mathcal R, \mathcal K) $. 
\end{proposition}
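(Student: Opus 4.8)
The plan is to mimic, essentially verbatim, the three–step completion built in the proof of Proposition~\ref{prop:remove cycles} --- first remove the sources and sinks, then make the network conservative, then kill the cycles --- while checking at each step that no reaction of $\mathcal{R}_a$ is modified, i.e.\ that the substances introduced at that step never appear in the completions of the reactions of $\mathcal{R}_a$. The three hypotheses are tailored exactly for this: the assumption that every $R\in\mathcal{R}_a$ obeys \eqref{no sources/sinks} handles Step~1, the assumption $\pi_A R=0$ for $R\in\mathcal{R}_a$ handles Step~2, and $\mathcal{R}_a\cap\mathcal{C}=\emptyset$ will handle Step~3. Only Step~3 requires a construction genuinely different from that of Proposition~\ref{prop:remove cycles}; the rest is a bookkeeping adaptation.

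For \textbf{Step~1} the only reactions touched are those in $Q=\{R\in\mathcal{R}:I(R)=\emptyset\text{ or }F(R)=\emptyset\}$, and $Q\cap\mathcal{R}_a=\emptyset$ by hypothesis; so after Step~1 each reaction of $\mathcal{R}_a$ sits in $\mathcal{R}_q$ with all new coordinates equal to zero, and since the reactions added in Step~1 involve the new substances only with coefficients $\pm\alpha$, every $m\in\mathcal{M}$ extends by zero to a conservation law of $(\Omega_q,\mathcal{R}_q)$, so $\mathcal{M}$ embeds into $\mathcal{M}_q$. For \textbf{Step~2} the reactions touched are those $R$ with $\pi_B R\neq0$, where $B=\{u\in\Omega_q:m(u)=0\ \forall m\in\mathcal{M}_q\}$. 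Since a reaction of $\mathcal{R}_a$ has support contained in $\Omega$, it suffices to note that $B\cap\Omega\subseteq A$: if $i\in\Omega$ and $i\notin A$, then $m(i)\neq0$ for some $m\in\mathcal{M}$, and its zero–extension lies in $\mathcal{M}_q$ and is still nonzero at $i$, so $i\notin B$. Because $\pi_A R=0$ for $R\in\mathcal{R}_a$, this gives $\pi_B R=0$, so Step~2 does not touch $\mathcal{R}_a$ either. One ends Steps~1--2 with a completion $(\Omega_e,\mathcal{R}_e)$ that is conservative, free of sources and sinks, in which every reaction of $\mathcal{R}_a$ is embedded with zero new coordinates, and in which $\pi_\Omega$ induces a bijection $\mathcal{R}_e\to\mathcal{R}$ that is $1$-$1$ and has no zero–reduction (see Definition~\ref{def:11 and zero}); hence, by Lemma~\ref{lem:cycles reduced 11}, $\mathcal{C}_e\subseteq\mathcal{C}$, and therefore every $c\in\mathcal{C}_e$ vanishes on the columns indexing the reactions of $\mathcal{R}_a$.

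The core is \textbf{Step~3}. The construction of Proposition~\ref{prop:remove cycles} breaks a cycle $c$ by splitting the stoichiometric row $\zeta^T$ of some substance, but that row may have nonzero entries on columns of $\mathcal{R}_a$, in which case the substances added by the split would appear in $\mathcal{R}_a$–reactions and admissibility would be lost. Instead I would argue as follows: given a nonzero $c$ in the current space of cycles, choose an index $\ell$ with $c(\ell)\neq0$; since that space is contained in $\mathcal{C}$ and $\mathcal{R}_a\cap\mathcal{C}=\emptyset$, the reaction $R_\ell$ is not in $\mathcal{R}_a$. Add two new substances whose rows of the reaction matrix are $e_\ell^T$ and $-e_\ell^T$ --- equivalently, replace $R_\ell$ by $(R_\ell,+1,-1)$ and $-R_\ell$ by $(-R_\ell,-1,+1)$, leaving every other reaction unchanged on the two new coordinates. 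Then: the new rows evaluate to $\pm c(\ell)\neq0$ on $c$, so $c$ is no longer a cycle, and since the kernel of the enlarged reaction matrix is the intersection of the old cycle space with $\{c'(\ell)=0\}$ its dimension drops by exactly one; no source or sink is created because $I(R_\ell)$ and $F(R_\ell)$ were already nonempty; the network stays conservative, because if $m_e$ is a strictly positive conservation law of $(\Omega_e,\mathcal{R}_e)$ then $(m_e,1,1)$ is one of the enlarged network; and admissibility is kept, because the two new substances appear only in $R_\ell$ (and $-R_\ell$), with $R_\ell\notin\mathcal{R}_a$. Moreover the enlargement clearly preserves the fact that $\pi_\Omega$ is a bijection onto $\mathcal{R}$ with no zero–reduction. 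Iterating $\dim\mathcal{C}_e$ times produces a completion $(\Omega_c,\mathcal{R}_c)$ that is conservative, sink/source–free, acyclic, and $\mathcal{R}_a$–admissible.

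Finally, since $(\Omega_c,\mathcal{R}_c)$ has no cycles the circuit condition of Lemma~\ref{lem:Db equivalence} holds trivially, so for any rate function $\mathcal{K}_c$ the kinetic system $(\Omega_c,\mathcal{R}_c,\mathcal{K}_c)$ satisfies the detailed balance property and is therefore closed; as $\pi_\Omega$ is a bijection of $\mathcal{R}_c$ onto $\mathcal{R}$ with no zero–reductions, I would take $\mathcal{K}_c(R):=\mathcal{K}(\pi_\Omega R)$, so that by \eqref{Khat}--\eqref{reduced rates}, freezing all the added concentrations $n_{\Omega_c\setminus\Omega}$ at $\textbf{1}$ recovers $\mathcal{K}_c[\textbf{1}]=\mathcal{K}$. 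Hence $(\Omega_c,\mathcal{R}_c,\mathcal{K}_c)$ is an $\mathcal{R}_a$–admissible closed completion of $(\Omega,\mathcal{R},\mathcal{K})$. I expect the only real obstacle to be the conservation–preserving, $\mathcal{R}_a$–avoiding replacement of Step~3 described above; once that is in place, the proof is a line–by–line adaptation of the proof of Proposition~\ref{prop:remove cycles}.
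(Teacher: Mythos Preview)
Your proposal is correct. Steps~1 and~2 are the paper's argument, only with more detail on why $\mathcal{R}_a$-admissibility is preserved (in particular your observation $B\cap\Omega\subseteq A$, which the paper leaves implicit). The difference lies in Step~3. The paper asserts that in the cycle-removal construction of Proposition~\ref{prop:remove cycles} one may choose the row $\zeta^T$ of $\textbf{R}_e$ (and hence $W_1,W_2$) so that it vanishes on every column indexing a reaction of $\mathcal{R}_a$, ``by construction''; it does not actually justify this, and indeed nothing in the hypotheses prevents every species appearing in two cycle reactions from also appearing in some $\mathcal{R}_a$-reaction. You sidestep this by a different, simpler modification: pick a single index $\ell$ with $c(\ell)\neq0$ (so $R_\ell\notin\mathcal{R}_a$) and append the two rows $\pm e_\ell^T$. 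This touches exactly one reaction per iteration, makes admissibility, conservativity, and the absence of sources/sinks immediate to check, and guarantees that the shrinking cycle space stays inside $\mathcal{C}$ so that the choice of $\ell$ is always available at the next step. Your Step~3 is therefore not just a variant but a cleaner argument than the paper's, which as written leaves a gap exactly at the point you identify.
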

\begin{proof}
    First of all notice that since by assumption the set of reactions $\mathcal R_a $ satisfy \eqref{no sources/sinks}, then we can repeat step $1$ in the proof of Theorem \ref{thm:completion} to obtain an admissible completion $ (\Omega_q, \mathcal R_q )$ whose reactions satisfy \eqref{no sources/sinks}.  Moreover, since we assume that for every $R \in \mathcal R_a$ we have that $\pi_A R =0 $ we can apply step 2 in the proof of Theorem \ref{thm:completion} in order to obtain an admissible completion that satisfies \eqref{no sources/sinks} and that is conservative.
    
   Finally since the reactions in the set $\mathcal R_a $ are not in the cycles of the kinetic system $(\Omega, \mathcal R, \mathcal K ) $ we can argue as in step 3 in the proof of Theorem \ref{thm:completion} to obtain the desired conclusion.
   Indeed, by construction the vector $\zeta^T \in  \mathbb R^{|\mathcal R|}$ in that proof is such that $\zeta(i)=0$ for every $ i $ such that $R_i \in \mathcal R_a$. The same holds for the vectors $W_1 $ and $W_2 $ constructed in that proof.   
\end{proof}

In the following proposition we prove that if $(\Omega , \mathcal R, \mathcal K ) $ is a kinetic system that does not satisfy the detailed balance property and one of the cycles in the kinetic system contains only reactions that are constrained, then the kinetic system does not admit a closed admissible completion. 
\begin{proposition}
    Assume that the bidirectional kinetic system $(\Omega, \mathcal R, \mathcal K ) $ is such that there exists a cycle $c \in \mathcal C $ with 
    \begin{equation}\label{eq:failed db in cycles}
    \sum_{j=1}^{|\mathcal R|/2} \mathcal E (R_j) c(j) \neq 0.  
    \end{equation}
    Assume moreover that every $R \in c $ is such that $R \in  \mathcal R_a $ where $\mathcal R_a $ is the set of constrained reactions. 
Then the kinetic system $(\Omega, \mathcal R, \mathcal K ) $ does not admit a closed completion. 
\end{proposition}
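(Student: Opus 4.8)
The plan is to argue by contradiction. Suppose $(\Omega_c,\mathcal R_c,\mathcal K_c)$ is a closed $\mathcal R_a$-admissible completion of $(\Omega,\mathcal R,\mathcal K)$. Being closed, it satisfies the detailed balance property; since detailed balance is only defined for bidirectional kinetic systems, $(\Omega_c,\mathcal R_c)$ is bidirectional, so $(-R,\textbf{0})\in\mathcal R_c$ whenever $(R,\textbf{0})\in\mathcal R_c$. I will use the cycle $c$ to produce a cycle of the completed system, apply Lemma \ref{lem:energy on cycles is zero} to it, and deduce that $\sum_{j} c(j)\,\mathcal E(R_j)=0$, contradicting \eqref{eq:failed db in cycles}.

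The central step is to pin down the completion $\overline R_j\in\mathcal R_c$ of each reaction $R_j=\textbf{R}e_j$ with $c(j)\neq0$. Since $R_j$ lies in the cycle $c$ all of whose reactions are constrained, $R_j\in\mathcal R_a$, and (as is forced by the meaning of a constrained reaction — a reaction and its reverse are the same reaction — we take $\mathcal R_a$ symmetric under $R\mapsto-R$) also $-R_j\in\mathcal R_a$. Admissibility then forces every element of $\pi_\Omega^{-1}(R_j)$ to have vanishing $\Omega_c\setminus\Omega$-component; since $R_j\in\mathcal R=\pi_\Omega(\mathcal R_c)$ this preimage is nonempty, hence $\pi_\Omega^{-1}(R_j)=\{\overline R_j\}$ with $\overline R_j=(R_j,\textbf{0})$, and likewise $\pi_\Omega^{-1}(-R_j)=\{-\overline R_j\}$. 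Feeding this into \eqref{reduced rates}--\eqref{Khat}, and using that $D(\overline R_j)\subset\Omega$ is disjoint from $\Omega_c\setminus\Omega$ so that the correction factor in \eqref{Khat} is an empty product, I obtain $\mathcal K(R_j)=\mathcal K_c(\overline R_j)$ and $\mathcal K(-R_j)=\mathcal K_c(-\overline R_j)$. Consequently $\mathcal E(R_j)=\log\bigl(\mathcal K(-R_j)/\mathcal K(R_j)\bigr)=\log\bigl(\mathcal K_c(-\overline R_j)/\mathcal K_c(\overline R_j)\bigr)=\mathcal E_c(\overline R_j)$, where $\mathcal E_c$ is the Gibbs free energy function of $(\Omega_c,\mathcal R_c,\mathcal K_c)$.

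Next I would form the vector $\tilde c$ on the reactions of $(\Omega_c,\mathcal R_c)$ that carries the coefficient $c(j)$ on $\overline R_j$ for each $j$ with $c(j)\neq0$ and vanishes on all other reactions. Since $\textbf{R}c=\textbf{0}$ we get $\sum_j c(j)\overline R_j=\bigl(\sum_j c(j)R_j,\textbf{0}\bigr)=\textbf{0}$, so $\textbf{R}_c\tilde c=\textbf{0}$, i.e. $\tilde c\in\mathcal C_c$; the signs cause no trouble, because whichever of $\pm\overline R_j$ is the chosen representative of the pair, its contribution both to $\textbf{R}_c\tilde c$ and to $\overline{\mathcal E}_c(\tilde c)$ is the same, $\mathcal E_c$ being odd. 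Because $(\Omega_c,\mathcal R_c,\mathcal K_c)$ satisfies detailed balance, Lemma \ref{lem:energy on cycles is zero} yields $\overline{\mathcal E}_c(\tilde c)=0$. On the other hand, by the previous paragraph $\overline{\mathcal E}_c(\tilde c)=\sum_j c(j)\,\mathcal E_c(\overline R_j)=\sum_j c(j)\,\mathcal E(R_j)$, which is nonzero by \eqref{eq:failed db in cycles}. This contradiction shows that no closed $\mathcal R_a$-admissible completion exists.

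I expect the only delicate part to be the second paragraph: one must be sure that, for a constrained reaction, its preimage under the projection is a single reaction involving exactly the same substances, so that the reduced rate coincides with the rate of the completion rather than with a sum over several completed reactions. This is precisely where the admissibility constraint (and the symmetry of $\mathcal R_a$ under reversal) is used, and it is exactly what converts a failed circuit condition along a fully constrained cycle into an obstruction to closure.
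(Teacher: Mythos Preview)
Your proof is correct and follows essentially the same route as the paper: lift the cycle $c$ to the completion using that constrained reactions have a unique preimage $(R_j,\textbf{0})$, observe that the Gibbs energy along this lifted cycle equals the one along $c$ (since the correction factors from $\Omega_c\setminus\Omega$ vanish), and conclude that the circuit condition fails in the completion, contradicting detailed balance. You are in fact slightly more careful than the paper in spelling out why the preimage is a singleton and why the reduced rate formula gives $\mathcal K(R_j)=\mathcal K_c(\overline R_j)$, and in handling the $\pm$ sign ambiguity when passing to $(\mathcal R_c)_s$.
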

\begin{proof}
    Assume that the chemical network $(\Omega_c, \mathcal R_c) $ is an admissible completion of $(\Omega, \mathcal K )$. We now prove that $c \in \mathcal C_c $. 
    Indeed since the completion is admissible we have that given a reaction $R \in c $ it holds that there exists a unique completion $\overline R$ of $R $ in $\mathcal R_c$ and this completion is such that $\pi_{\Omega_c \setminus \Omega } \overline R =0$. 
    As a consequence we have that the fact that $\sum_{j=1}^{|\mathcal R|} c(j) R_j=0$ implies that $\sum_{j=1 }^{|\mathcal R_c|}  c(j) \overline R_j=0$. Therefore $c \in \mathcal C_c$. As a consequence for every choice of concentrations $n_{\Omega_c \setminus \Omega } $, the corresponding rate function $\mathcal K_c$ defined as 
    \[
    \mathcal K_c(R)=\mathcal K(R) \prod_{i \in \Omega_c \setminus \Omega } n_i^{R(i)} \  \forall R \in \mathcal R_a 
     \] 
    is such that
    \begin{align*}
     \sum_{j=1}^{|\mathcal R_c|/2} \mathcal E (R_j) c(j)=    \sum_{j=1}^{|\mathcal R_c|/2} \mathcal E ( \pi_\Omega R_j) c(j) +    \sum_{j=1}^{|\mathcal R_c|/2} c(j) \sum_{i \in \Omega_c \setminus \Omega } R_j(i) \log(n_i) =   \sum_{j=1}^{|\mathcal R_c|/2} \mathcal E ( \pi_\Omega R_j) c(j) \neq 0.
    \end{align*}
    In the above computation we have that $\mathcal R_c = \{ R_j\}_j $. As a consequence we have that there is no completion of $(\Omega, \mathcal R, \mathcal K) $ satisfying the detailed balance property.  
\end{proof}

Notice that in the above proposition we need to have that all the reactions in a cycle are constrained reactions in order to have that the kinetic system does not have a closed admissible completion. 
If instead only few reactions on the cycles are constrained it might still be possible to obtain a closed completion as shown in the following example. 
\begin{example} \label{ex: 1 free reaction is enough}
    Consider the kinetic system $(\Omega, \mathcal R, \mathcal K ) $ where $\Omega := \{ 1,2,3,4\} $ and where the reactions are 
        \[
    (1)  \leftrightarrows (2) ,\quad  (2)  \leftrightarrows  (3) , \quad  (3)  \leftrightarrows (4), \quad  (4)  \leftrightarrows (1) . 
    \]
    i.e. 
    \[
    \textbf{R} =\left(  \begin{matrix}
   -1 & 0 & 0 & 1 \\     
    1 & -1 & 0 & 0 \\ 
    0 & 1 & -1 & 0 \\
    0 & 0 & 1 & -1 \\ 
    \end{matrix}\right) 
    \]
    Assume that the rate function $\mathcal K $ is such that the kinetic system does not have the detailed balance property. We assume moreover that the reaction $(2)  \leftrightarrows  (3)$ , i.e. the reaction $(0,-1,1,0)^T$ is constrained. 
    We consider the following admissible completion $\Omega_c := \{ 1,2,3,4, 5 , 6\}$ and the following matrix of the reactions 
   \[
    \textbf{R}_c =\left(  \begin{matrix}
   -1 & 0 & 0 & 1 \\     
    1 & -1 & 0 & 0 \\ 
    0 & 1 & -1 & 0 \\
    0 & 0 & 1 & -1 \\ 
    0 & 0 & 0 & -1 \\ 
    0 & 0 & 0 & 1 \\ 
    \end{matrix}\right) 
    \]  
    Notice that the chemical system $(\Omega_c, \mathcal R_c) $ does not have cycles, is conservative and all the reactions satisfy \eqref{no sources/sinks}, hence, for every choice of rate function $\mathcal K_c $ we have that the kinetic system $(\Omega_c, \mathcal R_c, \mathcal K_c) $ is closed. Therefore the original system $(\Omega, \mathcal R, \mathcal K ) $ has an admissible closed completion.
\end{example}

\section{Kinetic systems with fluxes} \label{sec:kinetic system with fluxes}
Let $(\Omega, \mathcal R, \mathcal K ) $ be a kinetic system. Assume that $U $ and $V$ are as in Definition \ref{def:reduced chemical netowork}. 
We consider now kinetic systems with fluxes of substances in $U \subset \Omega$. 
The concentration of the substances $i \in U $ are fixed at some given values $n_U=(n_i)_{i \in U}$. 
In other words we assume that the concentrations of substances in the network evolve according to the following system of ODEs 
\begin{equation} \label{ODEs with fluxes}
 \frac{d n (t) }{dt } = \sum_{ R \in \mathcal R_s } R J_R(n) + J^{E} (t),  \text{ with } n_0 \in \mathbb R_*^N \text{ s.t. }  \pi_U n(0)=n_U  
\end{equation}
where $ J^E(t) = \sum_{i \in U} J^E_i (t)$ with 
$J^E_i(t) := - e_i \sum_{ R \in \mathcal R_s } R(i) J_R(n) $.
Later we will use the notation  $(\Omega, \mathcal R, \mathcal K ) $ to refer to a kinetic system with fluxes in $U$.

We stress that the kinetic system with fluxes in \eqref{ODEs fluxes} does not have the same stoichiometry and in particular does not have the same conservation laws as the kinetic system $(\Omega, \mathcal R, \mathcal K ) $. 
Indeed, assume that $m \in \mathcal M $. Multiplying equation \eqref{ODEs fluxes} by $m^T$ we obtain that 
\[
 \frac{d m^T n (t) }{dt } = m^TJ^{E}. 
\]
As a consequence, unless $\pi_U m =0 $, we have that $m $ is not a conservation law of the kinetic system with fluxes. 

The rest of the section is organized as follows. In Section \ref{sec:long-time beh system with fluxes} we study the long-time behaviour of kinetic system with fluxes $(\Omega, \mathcal R, \mathcal K , n_U) $ where the kinetic system $(\Omega, \mathcal R, \mathcal K)$ is assumed to satisfy the detailed balance property and where the concentrations $n_U $ are selected to be at equilibrium values. 
In Section \ref{sec:reduced system and system with fluxes} we clarify the relation between the kinetic system with fluxes in $U$, $(\Omega, \mathcal R,\mathcal K , n_U) $ and the reduced kinetic system $(\Omega, \mathcal R,\mathcal K[ n_U ] ) $.

 \subsection{Long-time behaviour of kinetic systems with fluxes}\label{sec:long-time beh system with fluxes}
In this section we prove that when the kinetic system $(\Omega , \mathcal R , \mathcal K ) $ satisfies the detailed balance property and we consider the kinetic system with fluxes in $U $ that take equilibrium values, i.e. $n_U $ is given by \eqref{eq:conc at equilibirum} then the free energy $F$ defined as in \eqref{entropy} is non increasing and the dissipation $\mathcal D= - \partial_t F $ is zero at the steady states of the system of ODEs \eqref{ODEs with fluxes}. 
This allows us to study the long-time behaviour of the solution to  \eqref{ODEs with fluxes} and to obtain the following result. 
\begin{theorem} \label{thm:stability in system with fluxes}
Assume that the kinetic system $(\Omega , \mathcal R, \mathcal K )$ satisfies the detailed balance property. Let $U$ and $V$ be as in Definition \ref{def:reduced chemical netowork}. 
Assume that $U\subset \Omega $ and $n_U$ are such that the assumptions of Proposition \ref{prop: DB when equilibirum conc} hold, hence $n_U $ is given by \eqref{eq:conc at equilibirum}. 
Consider the system of ODEs with fluxes \eqref{ODEs with fluxes} corresponding to $(\Omega , \mathcal R, \mathcal K, n_U )$. 
Then we have that 
\begin{equation} \label{bound fluxes}
 \overline J^E_i:=\int_0^\infty J_i^E (t) < \infty, \quad \text{ for every } i \in U
\end{equation}
where $n$ is the solution to  \eqref{ODEs with fluxes}. 
Moreover we have that for every $n_0\in \mathbb R_*^N $ that is such that $\pi_U n_0 =n_U $ we have that 
$\lim_{t \to \infty } n(t)= e^{-E}$
where $E$ is the unique energy of $(\Omega, \mathcal R, \mathcal K) $ that is such that
\begin{equation} \label{cons laws when fluxes}
m^T e^{- E }- m^T n_0 = m^T \overline J \text{ for every } m \in \mathcal M. 
\end{equation}
Here $\overline J \in \mathbb R_*^{N}$ is the vector $\overline J^E(j)=\overline J_j$ for every $j \in U $ while $\overline J (j)=0$ for every $j \in V$.
\end{theorem}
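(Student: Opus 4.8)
The plan is to push everything through the reduced kinetic system $(V,\mathcal R_V,\mathcal K[n_U])$, which by Proposition~\ref{prop: DB when equilibirum conc} is detailed balanced under the stated hypotheses. First, from the form of \eqref{ODEs with fluxes} the $U$-components are frozen: for $i\in U$ the $i$-th component of $\sum_{R\in\mathcal R_s}R J_R(n)+J^E(t)$ is $\sum_{R}R(i)J_R(n)-\sum_{R}R(i)J_R(n)=0$, so $n(t)=(n_V(t),n_U)$. A short bookkeeping check using the definitions \eqref{reduced rates}--\eqref{Khat} of $\mathcal K[n_U]$ shows that $J_{\overline R}(n_V)=\sum_{\tilde R:\ \pi_V\tilde R=\overline R}J_{\tilde R}(n)$ for every $\overline R\in\mathcal R_V$, so the $V$-part of \eqref{ODEs with fluxes} is precisely the ODE system of the reduced kinetic system. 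Since the latter is detailed balanced, Proposition~\ref{prop:global stability db big} yields a unique positive steady state $\overline N_V$ in the stoichiometric compatibility class of $\pi_V n_0$, with $n_V(t)\to\overline N_V$; hence $n(t)\to\overline N:=(\overline N_V,n_U)\in\mathbb R_+^N$. By Lemma~\ref{lem:db special ss} applied to the reduced system, $\overline N_V=e^{-E_V}$ for a reduced energy $E_V$, and the reduced free energy built from $E_V$ is a radially unbounded Lyapunov functional whose dissipation $\mathcal D_V\ge 0$ vanishes only at $\overline N_V$.

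Second, I would make the convergence quantitative. The positive steady state of a detailed balanced mass-action system is exponentially attracting within its compatibility class: linearizing the reduced vector field $n_V\mapsto\sum_{\overline R}\overline R\,J_{\overline R}(n_V)$ at $\overline N_V$ gives, after a diagonal rescaling of the coordinates, a matrix that is negative definite on $\mathcal S_V$ (with respect to a suitable inner product), and combining this with the global attractivity of Proposition~\ref{prop:global stability db big} produces constants $C,\lambda>0$ with $|n_V(t)-\overline N_V|\le Ce^{-\lambda t}$. By Lemma~\ref{lem:flux db} each flux then obeys $|J_R(n(t))|\le C'e^{-\lambda t}$, so $\int_0^\infty|J_R(n(t))|\,dt<\infty$ for all $R\in\mathcal R_s$; in particular each $\overline J^E_i=-\int_0^\infty e_i^T\sum_{R\in\mathcal R_s}R(i)J_R(n(t))\,dt$ is finite, which is \eqref{bound fluxes}. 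Integrating \eqref{ODEs with fluxes} over $[0,\infty)$ then gives $\overline N-n_0=\sum_{R\in\mathcal R_s}R\,\overline J_R+\overline J$, where $\overline J_R:=\int_0^\infty J_R(n(t))\,dt$ and $\overline J$ has $\overline J(j)=\overline J^E_j$ for $j\in U$ and $\overline J(j)=0$ for $j\in V$.

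Third, I would identify $\overline N$ with $e^{-E}$. Since $\int_0^\infty (J^E(t))_i\,dt$ converges and $(J^E(t))_i\to-e_i^T\sum_{R}R(i)J_R(\overline N)$ by continuity, this limit must vanish, so $\sum_{R\in\mathcal R_s}R(i)J_R(\overline N)=0$ for $i\in U$; for $i\in V$ the same identity holds because $\pi_V$ applied to $\sum_{R}R\,J_R$ equals the reduced vector field, which vanishes at $\overline N_V$. Hence $\overline N$ is a positive steady state of \eqref{ODEs} for $(\Omega,\mathcal R,\mathcal K)$, and since this system is detailed balanced, Lemma~\ref{lem:db special ss} gives $\overline N=e^{-E}$ for an energy $E$ of $(\Omega,\mathcal R,\mathcal K)$. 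Pairing $\overline N-n_0=\sum_{R}R\,\overline J_R+\overline J$ with any $m\in\mathcal M$ and using $m^TR=0$ gives $m^Te^{-E}-m^Tn_0=m^T\overline J$, i.e. \eqref{cons laws when fluxes}. If $E'$ is another energy of $(\Omega,\mathcal R,\mathcal K)$ satisfying \eqref{cons laws when fluxes}, then $e^{-E}-e^{-E'}\perp\mathcal M$, hence lies in $\mathcal S$, so $e^{-E}$ and $e^{-E'}$ are two positive steady states of the detailed balanced system \eqref{ODEs} in the same stoichiometric compatibility class; the uniqueness part of Proposition~\ref{prop:global stability db big} forces $e^{-E}=e^{-E'}$, hence $E=E'$.

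I expect the main obstacle to be the quantitative input in the second step, namely establishing (or invoking with care) the exponential rate of convergence $|n_V(t)-\overline N_V|\le Ce^{-\lambda t}$, since the mere global attractivity of Proposition~\ref{prop:global stability db big} does not yield the $L^1$-in-time control of the fluxes needed both for \eqref{bound fluxes} and for the representation $\overline N-n_0=\sum_{R}R\,\overline J_R+\overline J$. A secondary technical point is the bookkeeping identity $J_{\overline R}(n_V)=\sum_{\pi_V\tilde R=\overline R}J_{\tilde R}(n)$ that identifies the $V$-dynamics with the reduced kinetic system, which must be verified carefully when the reduction is not one-to-one; one should also keep in mind that $\overline N$ need not coincide with the full-system equilibrium $e^{-E^*}$ furnished by the hypothesis of Proposition~\ref{prop: DB when equilibirum conc} (they may differ on $U\setminus\mathcal D(\mathcal C_V)$), although, as the argument above shows, $\overline N$ is nonetheless a steady state of \eqref{ODEs}.
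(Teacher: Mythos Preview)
Your route is different from the paper's. The paper never passes through the reduced system $(V,\mathcal R_V,\mathcal K[n_U])$; instead it works directly with the \emph{full} free energy $F(n)=\sum_{j\in\Omega}n_j\bigl(\log(n_je^{E(j)})-1\bigr)$, taking for $E$ the energy supplied by the hypothesis. Since $n_j=e^{-E(j)}$ for $j\in U$, the $U$-factors $\prod_{j\in U}(n_je^{E(j)})^{R(j)}$ equal $1$ and drop out of $\partial_tF$, leaving
\[
\partial_tF(n)=\sum_{R\in\mathcal R_s}K_R\prod_{i\in I(R)}n_i^{-R(i)}\Bigl(1-\prod_{j\in V}(n_je^{E(j)})^{R(j)}\Bigr)\log\Bigl(\prod_{j\in V}(n_je^{E(j)})^{R(j)}\Bigr)\le 0,
\]
with equality iff $\pi_Vn=e^{-\pi_VE}$. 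This single computation gives boundedness of $n(t)$, and then the identity $m^Tn(t)-m^Tn_0=\int_0^t m^TJ^E(s)\,ds$ (obtained by pairing \eqref{ODEs with fluxes} with $m\in\mathcal M$) yields \eqref{bound fluxes}; convergence to $e^{-E}$ follows from the Lyapunov argument, and uniqueness of the energy satisfying \eqref{cons laws when fluxes} is checked by the elementary observation that if $E_1-E_2=\overline m\in\mathcal M$ then $\sum_i\overline m(i)e^{-E_2(i)}(e^{-\overline m(i)}-1)=0$ forces $\overline m=0$.

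Your approach via Proposition~\ref{prop:global stability db big} on the reduced system is correct in outline, but the exponential rate you introduce in Step~2 is a genuine extra hypothesis that the paper neither needs nor proves. Once $n(t)\to\overline N$ is known (which your Step~1 already delivers), the integrated identity $m^Tn(t)-m^Tn_0=\int_0^t m^TJ^E$ immediately gives convergence of $\int_0^\infty m^TJ^E$ for every $m\in\mathcal M$, with no $L^1$ control of the individual $J_R$ required; this is all the paper uses for \eqref{bound fluxes} and \eqref{cons laws when fluxes}. Your Step~3 identification of $\overline N$ as a full-system steady state can likewise be made without the integral-convergence trick: by Lemma~\ref{lem:db special ss} on the reduced system $\overline N_V=e^{-E_V}$, and the computation in the proof of Proposition~\ref{prop: DB when equilibirum conc} shows $\mathcal E(\overline R)=\sum_{i\in V}E(i)\overline R(i)$, so $E_V-\pi_VE\in\mathcal M_V$; since every $m_V\in\mathcal M_V$ lifts to $(m_V,\textbf{0})\in\mathcal M$, the vector $E':=E+(E_V-\pi_VE,\textbf{0})$ is a full energy with $\overline N=e^{-E'}$. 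In short, the paper's direct Lyapunov computation on $F$ is the shorter path; your reduced-system argument works but the exponential convergence is an unnecessary detour that you can simply delete.
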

 \begin{proof}
Let us consider the function $F: \mathbb R_*^N \rightarrow \mathbb R$ defined as in \eqref{entropy}
\[ 
F(n)= \sum_{j \in \Omega } n_j  \left(\log\left( n_j e^{E(j) }\right) - 1 \right)
\]
where $E$ is an energy of $(\Omega, \mathcal R, \mathcal K ) $. 

Notice that using \eqref{ODEs with fluxes} and \eqref{flux DB} and using the fact that $n_U$ is given by \eqref{eq:conc at equilibirum} we deduce that 
\begin{align*}
  \partial_t F (n)&= \sum_{j \in \Omega  }   \partial_t n_j \left(\log\left( n_j e^{E(j) }\right) \right) 
    = \sum_{j \in V }   \partial_t n_j \left(\log\left( n_j e^{E(j) }\right) \right) =
    \sum_{j \in V  }   \sum_{R \in \mathcal R_s } R(j) J_R(n)  \left(\log\left( n_j e^{E(j) }\right) \right) \\
  &=\sum_{R \in \mathcal R_s }  K_R  \prod_{i \in I(R)} n_i^{- R(i) }  \left(1-  \prod_{j\in \Omega  } n_j^{ R(j) } e^{ R(j)E(j) } \right) \log\left( \prod_{j \in V  }  n_j^{R(j)} e^{R(j) E(j) }\right)\\
  &= \sum_{R \in \mathcal R_s }  K_R  \prod_{i \in I(R)} n_i^{- R(i) }  \left(1-  \prod_{j\in V  } n_j^{ R(j) } e^{ R(j)E(j) } \right) \log\left( \prod_{j \in V  }  n_j^{R(j)} e^{R(j) E(j) }\right).
    \end{align*}
  As a consequence we have that $ \partial_t F (n) \leq 0$ for every $n \in \mathbb R_*^N$ and that $ \partial_t F (n) = 0$ if and only if $n(i)=e^{- E(i) }$ for every $ i \in V $. 
  Notice that this in particular implies that there exists a constant $c>0 $ such that 
  \[
 \left| F(n(t)) \right|= \left|\sum_{j \in \Omega } n_j  \left(\log\left( n_j e^{E(j) }\right) - 1 \right) \right| \leq c \text{ for every }  t \geq 0. 
  \] 
  Notice that this implies that there exists a constant $k$ such that $\sum_{j \in \Omega } n_j(t) \leq k  $ for every $t>0$. 
  As a consequence we deduce that for every $j \in \Omega $ it holds that $ n_j (t) < \infty $ for every $t>0$.
On the other hand notice that for every $m \in \mathcal M $ we have that 
\[
m^T  n(t) - m^T  n_0 =m(1) \int_0^t J^E(s) ds
\]
From this we deduce \eqref{bound fluxes}. 
Taking the limit as $ t \rightarrow \infty  $ we deduce that 
  \[
m^T  \lim_{ t \to\infty} n(t) =m^T \overline J  +  m^T  n_0.
\]
Now we prove that given an $n_0 \in \mathbb R_*^N $, there exists an unique energy $E$ of $(\Omega , \mathcal R, \mathcal K ) $ such that $m^T e^{-E}  =m^T \overline J  +  m^T  n_0$. 
Indeed assume that there exists two energies $E_1 $ and $E_2$ that satisfy \eqref{cons laws when fluxes}. Then since $E_1, E_2$ are energies for the same kinetic system $(\Omega, \mathcal R, \mathcal K ) $ we have that $R^T (E_1- E_2) =0$. As a consequence we obtain that there exists a conservation law $\overline m \in \mathcal M $ that is such that $E_1=E_2 + \overline m $. This implies that 
\[
0=\overline m^T (e^{-E_1}-e^{-E_2}) = \overline m^T (e^{-E_2- \overline m}-e^{-E_2}) = \sum_{i \in \Omega } \overline m(i) e^{-E_2(i)} (e^{- \overline m(i)}-1 )  
\]
Notice that $\overline m(i) e^{-E_2(i)}  (e^{- \overline m(i)}-1 ) \geq 0 $ for every $i\in \Omega $ and as a consequence we need to have $\overline m(i) =0$ for every $i \in \Omega $ in order to have $0 = \sum_{i \in \Omega } \overline m(i) e^{-E_2(i)} (e^{- \overline m(i)}-1 ) $. Therefore $E_1=E_2$. 
This implies that for a given initial datum there is a unique energy such that \eqref{cons laws when fluxes} holds. 

As a consequence the restriction of $F$ to the class of solutions satisfying \eqref{cons laws when fluxes} is a radially unbounded Lyapunov functional and $F(n)=0$ if and only if $n=e^{-E} $ for the unique energy of $(\Omega , \mathcal R , \mathcal K) $ that satisfies \eqref{cons laws when fluxes}. We then deduce the desired conclusion. 
\end{proof}
When $|U | =1 $ we have the following result.  
\begin{corollary} 
  Assume that $(\Omega , \mathcal R, \mathcal K )$ satisfies the detailed balance condition.
    Consider the corresponding system with fluxes in $1\in \Omega $, i.e. $(V, \mathcal R_V, \mathcal K , n_1)$ with $ V:=\Omega \setminus \{ 1\} $. Assume that one of the two conditions of Corollary \ref{cor:one frozen conc db} holds. 
    Then we have that the solution $n$ to \eqref{ODEs with fluxes} satisfies \eqref{bound fluxes}. 
Moreover we have that for every $n_0\in \mathbb R^N $ such that $\pi_U n_0 =n_U $ we have that 
$\lim_{t \to \infty } n(t)= e^{-E}$ where $E$ is the unique energy of $(\Omega, \mathcal R, \mathcal K) $ that is such that \eqref{cons laws when fluxes} holds. 
\end{corollary}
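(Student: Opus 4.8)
The plan is to obtain this corollary directly from Theorem \ref{thm:stability in system with fluxes} by checking that, under the stated hypotheses, the single frozen value $n_1$ may be treated as an equilibrium value, so that the assumptions of Proposition \ref{prop: DB when equilibirum conc}, and hence those of Theorem \ref{thm:stability in system with fluxes}, are satisfied with $U=\{1\}$ and $n_U=n_1$. Granting this, the two assertions of the corollary are exactly the $|U|=1$ instances of the conclusions of Theorem \ref{thm:stability in system with fluxes}: the bound \eqref{bound fluxes} over the one-element index set $U=\{1\}$ reads $\overline J^E_1:=\int_0^\infty J_1^E(t)\,dt<\infty$, while the convergence statement gives $\lim_{t\to\infty}n(t)=e^{-E}$ with $E$ the unique energy of $(\Omega,\mathcal R,\mathcal K)$ determined by \eqref{cons laws when fluxes}. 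In particular nothing in the proof of Theorem \ref{thm:stability in system with fluxes} used $|U|\ge 2$, so its Lyapunov argument applies as is.

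The actual work is the verification, and I would split it according to the two alternatives behind Corollary \ref{cor:one frozen conc db}. If substance $1$ does not appear in any reaction whose $V$-reduction lies in a cycle of the reduced system, i.e. $1\notin\mathcal D(\mathcal C_V)$, then $U\cap\mathcal D(\mathcal C_V)=\emptyset$ and the first case of Proposition \ref{prop: DB when equilibirum conc} applies: the reduced system $(V,\mathcal R_V,\mathcal K[n_1])$ satisfies the detailed balance property, and condition \eqref{eq:conc at equilibirum} is vacuous, so it holds for any energy $E$ of $(\Omega,\mathcal R,\mathcal K)$ (at least one exists because the detailed balance property holds). If instead $\mathcal M\neq\{0\}$ with some $m\in\mathcal M$ satisfying $m(1)\neq 0$, I would recycle the shifting argument from the proof of Corollary \ref{cor:one frozen conc db}: starting from any energy $E_0$ of $(\Omega,\mathcal R,\mathcal K)$ and subtracting $\frac{\log n_1+E_0(1)}{m(1)}\,m$ yields an energy $E$ with $E(1)=-\log n_1$, i.e. $n_1=e^{-E(1)}$, so that \eqref{eq:conc at equilibirum} is met for $U=\{1\}$ and the second case of Proposition \ref{prop: DB when equilibirum conc} gives the detailed balance property of the reduced system. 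In either alternative $n_U$ has the form \eqref{eq:conc at equilibirum} and Theorem \ref{thm:stability in system with fluxes} is applicable, yielding \eqref{bound fluxes} and the convergence $n(t)\to e^{-E}$ for every admissible $n_0$ with $\pi_U n_0=n_U$.

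The point that needs a little care, and which I would flag as the main though minor obstacle, is reconciling the two ways in which $n_1$ is an equilibrium value: the energy $E$ produced above to satisfy \eqref{eq:conc at equilibirum} need not coincide with the energy eventually singled out by the conserved-quantity balance \eqref{cons laws when fluxes}. This is harmless, since the energies of $(\Omega,\mathcal R,\mathcal K)$ form the affine family $E_0+\mathcal M$, the conservation laws $m\in\mathcal M$ with $\pi_U m=0$ remain conservation laws of the system with fluxes (so $m^T\overline J=0$ and \eqref{cons laws when fluxes} reduces to $m^Te^{-E}=m^Tn_0$ for those $m$), and the finiteness of $\overline J^E_1$ from \eqref{bound fluxes} makes the right-hand side of \eqref{cons laws when fluxes} finite; combined with the uniqueness argument already carried out in the proof of Theorem \ref{thm:stability in system with fluxes}, this pins down a single energy $E$, automatically compatible with $n_1=e^{-E(1)}$ because $n_1(t)\equiv n_1$ along the flow while $n(t)\to e^{-E}$. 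Hence no new estimate is required: the free energy $F$ of \eqref{entropy}, its monotonicity along \eqref{ODEs with fluxes}, the boundedness of the external fluxes, and the identification of the limiting energy all transfer verbatim from Theorem \ref{thm:stability in system with fluxes}.
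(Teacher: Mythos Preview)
Your proposal is correct and follows essentially the same route as the paper: verify that for $U=\{1\}$ the frozen value $n_1$ can be realised as $e^{-E(1)}$ for some energy $E$ of $(\Omega,\mathcal R,\mathcal K)$ (via the shifting argument from Corollary~\ref{cor:one frozen conc db}), and then invoke Theorem~\ref{thm:stability in system with fluxes}. The paper's proof is simply the two-line version of this, omitting your explicit case split and your closing paragraph reconciling the energy used for \eqref{eq:conc at equilibirum} with the one selected by \eqref{cons laws when fluxes}; that additional discussion is sound but not needed, since Theorem~\ref{thm:stability in system with fluxes} already handles the identification of the limiting energy.
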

\begin{proof}
    The proof of this statement follows by the fact that, as explained in the proof of Corollary \ref{cor:one frozen conc db}, under the assumptions of the corollary for every value of $n_1$ there exists an energy $E$ of the kinetic system $(\Omega , \mathcal R, \mathcal K ) $ that is such that $n_1 = e^{- E(1) }$. The statement then follows by Theorem \ref{thm:stability in system with fluxes}. 
\end{proof}

 \subsection{Reduced kinetic systems and kinetic systems with fluxes} \label{sec:reduced system and system with fluxes}
 Clearly there is a relation between the kinetic system with fluxes $(\Omega , \mathcal R, \mathcal K , n_U)$ and the reduced kinetic system $(\Omega , \mathcal R, \mathcal K [ n_U])$. 
 In this section we clarify how the two systems are related. 
\begin{lemma}[Reduced system and kinetic system with fluxes] \label{lem:reduced kinetics and kinetics with fluxes}
Let $(\Omega , \mathcal R, \mathcal K ) $ be a kinetic system. Let $U $ and $V$ be two sets as in Definition \ref{def:reduced chemical netowork} and let $n_U:= \{ n_i \}_{i \in U } $. 
Then the function $ t \mapsto n(t)  \in \mathbb R_*^{|V|} $ is the solution to the system of ODEs \eqref{ODEs} corresponding to the reduced system $(V , \mathcal R_V, \mathcal K [ n_U ])$ with initial condition $ n_0 \in \mathbb R_*^{|V|} $ if and only if the function $ t \mapsto \overline n (t) \in \mathbb R_*^N $ defined as $\overline n = ( n, n_U ) $ is the solution to \eqref{ODEs with fluxes} with initial condition $\overline n_0 = ( n_0,  n_U )$.
\end{lemma}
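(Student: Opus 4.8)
The plan is to prove the equivalence by comparing the two ODE systems coordinate by coordinate after the substitution $\overline n = (n, n_U)$, treating the $U$-coordinates and the $V$-coordinates separately. The first observation is that the external flux $J^E$ is built precisely so as to annihilate the $U$-block of the vector field in \eqref{ODEs with fluxes}: since $J^E_i(t) = -e_i\sum_{R\in\mathcal R_s} R(i) J_R(n)$, for every $i\in U$ the $i$-th coordinate of $\sum_{R\in\mathcal R_s} R J_R(\overline n) + J^E(t)$ is $\sum_{R\in\mathcal R_s} R(i) J_R(\overline n) - \sum_{R\in\mathcal R_s} R(i) J_R(\overline n) = 0$, whereas $J^E$ has no $V$-component. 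Hence, if $\overline n = (n,n_U)$ with $n_U$ constant then $\tfrac{d}{dt}\pi_U\overline n\equiv 0$ matches the $U$-block of \eqref{ODEs with fluxes} automatically, and conversely any solution of \eqref{ODEs with fluxes} starting from a datum with $\pi_U\overline n(0)=n_U$ keeps $\pi_U\overline n(t)\equiv n_U$. So all the content of the lemma is in the $V$-block.

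Next I would compute the $V$-block. For $j\in V$, since $J^E$ contributes nothing there, the $j$-th equation of \eqref{ODEs with fluxes} evaluated at $\overline n=(n,n_U)$ reads $\tfrac{d n_j}{dt} = \sum_{R\in\mathcal R} K_R\, R(j)\prod_{i\in I(R)}\overline n_i^{-R(i)}$ (using the Lemma that rewrites $\sum_{R\in\mathcal R_s}RJ_R$ as the full mass-action sum). Splitting $I(R)$ along the partition $\Omega = U\cup V$ and using that $\overline n_i = n_i$ for $i\in V$ while $\overline n_i$ equals the frozen value for $i\in U$, one gets $\prod_{i\in I(R)}\overline n_i^{-R(i)} = \big(\prod_{i\in U\cap I(R)}\overline n_i^{-R(i)}\big)\prod_{i\in V\cap I(R)} n_i^{-R(i)}$; the first factor multiplied by $K_R$ is exactly $\hat K_R$ from \eqref{Khat}. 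Moreover $R(j) = (\pi_V R)(j)$, $V\cap I(R) = I(\pi_V R)$ and $(\pi_V R)(i) = R(i)$ for $i\in V$, and any reaction with $\pi_V R = 0$ drops out of the sum because then $R(j)=0$ for all $j\in V$. Therefore $\tfrac{d n_j}{dt} = \sum_{R\in\mathcal R,\ \pi_V R\ne 0}\hat K_R\,(\pi_V R)(j)\prod_{i\in I(\pi_V R)} n_i^{-(\pi_V R)(i)}$.

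Then I would group this sum according to the value $\tilde R := \pi_V R\in\mathcal R_V$: the reactions with $\pi_V R = \tilde R$ form $\pi_V^{-1}(\tilde R)$, and $\sum_{R\in\pi_V^{-1}(\tilde R)}\hat K_R = K_{\tilde R}[n_U]$ by the very definition \eqref{reduced rates}. This gives $\tfrac{d n_j}{dt} = \sum_{\tilde R\in\mathcal R_V} K_{\tilde R}[n_U]\,\tilde R(j)\prod_{i\in I(\tilde R)} n_i^{-\tilde R(i)}$, which is precisely the $j$-th component of the ODE \eqref{ODEs} for the reduced kinetic system $(V,\mathcal R_V,\mathcal K[n_U])$. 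Combining this with the first paragraph yields the two-sided implication: $(n,n_U)$ solves \eqref{ODEs with fluxes} with datum $(n_0,n_U)$ if and only if $n$ solves the reduced ODE \eqref{ODEs} with datum $n_0$, and in both cases $\overline n = (n,n_U)\in\mathbb R_*^N$ since $n\in\mathbb R_*^{|V|}$. The only real work, and the step most prone to error, is the bookkeeping in the second and third paragraphs: splitting the mass-action monomials along $U\cup V$, recognising the $U$-part as exactly the factor that turns $K_R$ into $\hat K_R$, and correctly handling the many-to-one nature of $\pi_V$ together with the reactions sent to zero, so that the regrouping reproduces \eqref{reduced rates} verbatim; everything else is immediate from the definition of $J^E$.
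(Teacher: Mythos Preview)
Your proof is correct and takes essentially the same approach as the paper: both verify the equality of the two vector fields by splitting the mass-action monomial $\prod_{i\in I(R)}\overline n_i^{-R(i)}$ along the partition $\Omega=U\cup V$, absorbing the $U$-factor into $\hat K_R$, and regrouping over the fibres $\pi_V^{-1}(\tilde R)$ to recover the definition of $K_{\tilde R}[n_U]$; the $U$-block is then disposed of via the definition of $J^E$. The only cosmetic difference is direction (the paper starts from the reduced ODE and rewrites it as $\pi_V$ of the flux ODE, you start from the flux ODE and reduce), and you are slightly more explicit about why reactions with $\pi_V R=0$ drop out.
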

\begin{proof}
Notice that by the definition of reduced system, and in particular of $\mathcal K [n_U]$, we have that $n$ is such that
\begin{align*}
  \frac{ d  n }{dt } &= \sum_{R \in (\mathcal R_V)_s } R K_R[n_U] \prod_{i \in I(R)} n_i^{- R(i) }  =  \sum_{R \in (\mathcal R_V)_s} R  \prod_{i \in I(R)} n_i^{- R(i) } \sum_{\overline R \in \pi_V^{-1} (R) } K_{\overline R } \prod_{i \in I(\overline R ) \cap U } n_i^{- \overline R(i) } \\
  &= \sum_{R \in (\mathcal R_V)_s }  \sum_{\overline R \in \pi_V^{-1} (R) } R \prod_{i \in I( \overline R)\cap V }  n_i^{- \overline R(i) }  K_{\overline R } \prod_{i \in I(\overline R ) \cap U  } n_i^{- \overline R(i) } = \sum_{R \in (\mathcal R_V)_s}  \sum_{\overline R \in \pi_V^{-1} (R) } R K_{\overline R } \prod_{i \in I( \overline R) } \overline{n}_i^{- \overline R(i) } \\
  &=  \sum_{\overline R \in \mathcal R_s}  ( \pi_V \overline R)   K_{\overline R } \prod_{i \in I( \overline R) } \overline{n}_i^{- \overline R(i) } =\pi_V \left( \sum_{\overline R \in \mathcal R_s}   \overline R   K_{\overline R } \prod_{i \in I( \overline R) } \overline{n}_i^{- \overline R(i) }\right) =\pi_V \left( \frac{ d \overline n }{dt } \right). 
\end{align*}
Since by the definition of the fluxes $J^E $ in \eqref{ODEs with fluxes} we have that $\pi_U\left( \frac{ d \overline n }{dt }\right)=0 $ the desired conclusion follows. 
\end{proof}

Notice that when the assumptions of Proposition \ref{prop: DB when equilibirum conc} and Corollary \ref{cor:one frozen conc db} are satisfied we have that if the kinetic system $(\Omega, \mathcal R, \mathcal K ) $ satisfies the detailed balance condition, then the reduced kinetic system $(V, \mathcal R_V, \mathcal K [n_U] ) $ satisfies the detailed balance condition. As a consequence of Proposition \ref{prop:global stability db big}, the system of ODEs \eqref{ODEs} corresponding to $(V, \mathcal R_V, \mathcal K [n_U] ) $  has a unique steady state for each compatibility class and the steady state is globally stable. Since, as proven in Lemma \ref{lem:reduced kinetics and kinetics with fluxes} the dynamics of the system of ODEs \eqref{ODEs} corresponding to $(V, \mathcal R_V, \mathcal K [n_U] ) $ is essentially the same as the one of the kinetic system with fluxes $(\Omega , \mathcal R_, \mathcal K, n_U ) $ described by the ODEs \eqref{ODEs with fluxes} when the initial datum in $U$ is given by $n_U $, this provides an alternative way of proving Theorem \ref{thm:stability in system with fluxes}.

We conclude this section with the following proposition where we prove that the change of free energy in a kinetic system with fluxes is due to the contribution of an external source of energy and due to the dissipation of energy in the reduced kinetic system $(U, \mathcal R_V, \mathcal K [n_U])$. 
\begin{proposition}
    Assume that the kinetic system $(\Omega , \mathcal R, \mathcal K )$ satisfies the detailed balance property. Let $U$ and $V$ be as in Definition \ref{def:reduced chemical netowork}. 
Assume that $n_U \in \mathbb R_*^{|U|}$. 
Consider the reduced system $(\Omega , \mathcal R, \mathcal K[ n_U ])$. Assume that every $R \in \mathcal R $ is not a reaction with a zero-$V$-reduction and that every $R \in \mathcal R_V $ is a $1$-$1$ reaction. 
Assume that $n$ is the solution to the system of ODEs with fluxes \eqref{ODEs with fluxes} corresponding to $(\Omega , \mathcal R, \mathcal K, n_U )$. 
Then  
\[
\partial_t F (n)= -  \mathcal D_R(n) +J^{ext}(n). 
\]
where $F$ is given by \eqref{entropy} for an energy $E$ of $(\Omega, \mathcal R, \mathcal K )$ and 
\[
\mathcal D_R(n)= \sum_{R \in \mathcal R_s } K_R[n_U] \prod_{i \in I(R) \cap V} n_i^{- R(i) }  \left(  \frac{  K_{-R}[n_U] }{K_{R}[n_U]} \prod_{i \in V} n_i^{ R(i) }-1\right) \log\left(   \frac{K_{-R}[n_U] }{K_R [n_U] } \prod_{j \in V  } n_j^{R(j)}\right), \quad n \in \mathbb R_+^N
\]
while
\[
J^{ext}(n):= \sum_{j \in U} J^E_j(t) \log(n_j e^{E(j)}). 
\]
\end{proposition}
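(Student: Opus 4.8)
The plan is to differentiate $F$ along the flow \eqref{ODEs with fluxes} and then rewrite the resulting reaction terms by identifying each non-reduced reaction with its $1$-$1$ reduction. First I would compute $\partial_t F(n)$. Since $F(n)=\sum_{j\in\Omega}n_j(\log(n_j e^{E(j)})-1)$, the product rule gives $\partial_t F(n)=\sum_{j\in\Omega}\partial_t n_j\,\log(n_j e^{E(j)})-\sum_{j\in\Omega}\partial_t n_j+\sum_{j\in\Omega}\partial_t n_j$, so the $-1$ contribution cancels and $\partial_t F(n)=\sum_{j\in\Omega}\partial_t n_j\,\log(n_j e^{E(j)})$. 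Inserting $\partial_t n_j=\sum_{R\in\mathcal R_s}R(j)J_R(n)+J^E_j(t)$, using that $J^E_j=0$ for $j\in V$ while $J^E_j(t)=-\sum_{R\in\mathcal R_s}R(j)J_R(n)$ for $j\in U$, I would split the sum as $\sum_{R\in\mathcal R_s}J_R(n)\log\!\big(\prod_{j\in\Omega}n_j^{R(j)}e^{R(j)E(j)}\big)+\sum_{j\in U}J^E_j(t)\log(n_j e^{E(j)})$. The second term is exactly $J^{ext}(n)$, so it remains to show the first term equals $-\mathcal D_R(n)$.

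Next I would use the two hypotheses: no reaction of $\mathcal R$ has zero-$V$-reduction and every reaction of $\mathcal R_V$ is $1$-$1$. Together these say that $R\mapsto\pi_V R$ is a bijection $\mathcal R\to\mathcal R_V$ respecting reverses, hence $\mathcal R_s\cong(\mathcal R_V)_s$; this is also what legitimises reading the $\sum_{R\in\mathcal R_s}$ in the statement of $\mathcal D_R$ as a sum over reduced reactions. Under this identification, \eqref{Khat} and \eqref{reduced rates 11} yield
$K_{\pi_V R}[n_U]\prod_{i\in I(\pi_V R)}n_i^{-(\pi_V R)(i)}=K_R\prod_{i\in I(R)}n_i^{-R(i)}$
(the $U$-factors being the frozen values $n_U$, which is precisely what \eqref{Khat} absorbs into $K_{\pi_V R}[n_U]$), and $\tfrac{K_{-R}}{K_R}\prod_{i\in U}n_i^{R(i)}=\tfrac{K_{-\pi_V R}[n_U]}{K_{\pi_V R}[n_U]}$. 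Combining the latter with $e^{R^T E}=K_{-R}/K_R$ from Lemma \ref{lem:db special ss} gives $\prod_{i\in\Omega}n_i^{R(i)}e^{R(i)E(i)}=\tfrac{K_{-\pi_V R}[n_U]}{K_{\pi_V R}[n_U]}\prod_{i\in V}n_i^{R(i)}$.

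Substituting these identities into $J_R(n)=K_R\prod_{i\in I(R)}n_i^{-R(i)}\big(1-\prod_{i\in\Omega}n_i^{R(i)}e^{R(i)E(i)}\big)$ from Lemma \ref{lem:flux db} and into the logarithm, the generic summand of the reaction part becomes $K_{\pi_V R}[n_U]\prod_{i\in I(\pi_V R)}n_i^{-(\pi_V R)(i)}\big(1-\tfrac{K_{-\pi_V R}[n_U]}{K_{\pi_V R}[n_U]}\prod_{i\in V}n_i^{R(i)}\big)\log\!\big(\tfrac{K_{-\pi_V R}[n_U]}{K_{\pi_V R}[n_U]}\prod_{j\in V}n_j^{R(j)}\big)$, which is exactly minus the corresponding summand in the definition of $\mathcal D_R(n)$ (the sign coming from $1-x=-(x-1)$). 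Summing over $\mathcal R_s\cong(\mathcal R_V)_s$ gives the reaction part $=-\mathcal D_R(n)$, hence $\partial_t F(n)=-\mathcal D_R(n)+J^{ext}(n)$.

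The only genuinely delicate point is the bookkeeping: one must keep track of the fact that in the kinetic system with fluxes the coordinates indexed by $U$ are frozen at $n_U$ (so the $U$-factors of $\prod_{i\in I(R)}n_i^{-R(i)}$ are precisely those absorbed into $K_{\pi_V R}[n_U]$ via \eqref{Khat}), and one must invoke the $1$-$1$/no-zero-reduction hypotheses exactly where $\sum_{R\in\mathcal R_s}$ and $\sum_{R\in(\mathcal R_V)_s}$ are interchanged. Everything else is the same one-line manipulation used to compute $\mathcal D$ in Section \ref{sec:global stability}, so I do not expect a further obstacle.
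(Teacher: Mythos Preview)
Your proposal is correct and follows essentially the same route as the paper's own proof: differentiate $F$, split off the $J^{ext}$ term coming from the $U$-coordinates, and then rewrite the reaction sum via the identities $K_R\prod_{i\in I(R)}n_i^{-R(i)}=K_{\pi_V R}[n_U]\prod_{i\in I(R)\cap V}n_i^{-R(i)}$ and $\prod_{j\in\Omega}n_j^{R(j)}e^{R(j)E(j)}=\tfrac{K_{-R}[n_U]}{K_R[n_U]}\prod_{j\in V}n_j^{R(j)}$. If anything, you are slightly more explicit than the paper in stating where the $1$-$1$/no-zero-reduction hypotheses are used (the bijection $\mathcal R_s\cong(\mathcal R_V)_s$), which the paper invokes only implicitly.
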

\begin{proof}
    By the definition of $F  $ we have that 
    \begin{align*}
  \partial_t F &= \sum_{j \in \Omega }   \partial_t n_j \left(\log\left( n_j e^{E(j) }\right) \right) =
    \sum_{j \in V  }   \sum_{R \in \mathcal R_s } J_R(n)  \left(\log\left( n_j^{R(j) } e^{E(j) R(j)  }\right) \right) \\
    &+  \sum_{j \in U  }   \sum_{R \in \mathcal R_s } J_R(n)  \left(\log\left( n_j^{R(j) } e^{E(j) R(j)  }\right) \right) +  \sum_{j \in U  }  J^E_i(t)  \left(\log\left( n_j  e^{E(j)   }\right) \right)  \\
  &=\sum_{R \in \mathcal R_s }  K_R  \prod_{i \in I(R)} n_i^{- R(i) }  \left(1-  \prod_{j\in \Omega  } n_j^{ R(j) } e^{ R(j)E(j) } \right) \log\left( \prod_{j \in \Omega   }  n_j^{R(j)} e^{R(j) E(j) }\right) + J^{ext}(n)
    \end{align*}
    Now notice using the relation between the rates $K_R $ and $K_R[n_U]$ we deduce that
    \begin{align*}
        \mathcal D[n]&= \sum_{R \in \mathcal R_s }  K_R  \prod_{i \in I(R)} n_i^{- R(i) }  \left(1-  \prod_{j\in \Omega  } n_j^{ R(j) } e^{ R(j)E(j) } \right) \log\left( \prod_{j \in \Omega   }  n_j^{R(j)} e^{R(j) E(j) }\right) \\
        &= \sum_{R \in \mathcal R_s }  K_R[n_U] \prod_{i \in I(R) \cap V} n_i^{- R(i) }  \left(1-  \prod_{j\in \Omega  } n_j^{ R(j) } e^{ R(j)E(j) } \right) \log\left( \prod_{j \in \Omega   }  n_j^{R(j)} e^{R(j) E(j) }\right) \\
        &=  \sum_{R \in \mathcal R_s }   \left( K_R[n_U] \prod_{i \in I(R) \cap V} n_i^{- R(i) }-  K_{-R}[n_U] \prod_{i \in F(R) \cap V} n_i^{ R(i) }\right) \log\left( \prod_{j \in \Omega   }  n_j^{R(j)} e^{R(j) E(j) }\right) 
    \end{align*}
    where in the last equality above we have used the fact that the detailed balance property of $(\Omega , \mathcal R, \mathcal K ) $ implies that 
    \[
    K_R[n_U] \prod_{i \in I(R) \cap V} n_i^{- R(i) }  \prod_{j\in \Omega  } n_j^{ R(j) } e^{ R(j)E(j) } = K_{-R}[n_U]  \prod_{i \in F(R) \cap V} n_i^{ R(i) }. 
    \]
    Similarly we have that
    \[
    \prod_{j \in \Omega } n_j^{R(j)} e^{R(j) E(j) } =     \prod_{j \in \Omega } n_j^{R(j)} \frac{K_{-R}}{K_R } =    \prod_{j \in V  } n_j^{R(j)} \frac{K_{-R}[n_U] }{K_R [n_U] }. 
    \]
    As a consequence 
    \begin{align*}
        \mathcal D [n] &= \sum_{R \in \mathcal R_s }   \left( K_R[n_U] \prod_{i \in I(R) \cap V} n_i^{- R(i) }-  K_{-R}[n_U] \prod_{i \in F(R) \cap V} n_i^{ R(i) }\right) \log\left(   \frac{K_{-R}[n_U] }{K_R [n_U] } \prod_{j \in V  } n_j^{R(j)}\right)\\
        &= \sum_{R \in \mathcal R_s } K_R[n_U] \prod_{i \in I(R) \cap V} n_i^{- R(i) }  \left( 1- \frac{  K_{-R}[n_U] }{K_{R}[n_U]} \prod_{i \in V} n_i^{ R(i) }\right) \log\left(   \frac{K_{-R}[n_U] }{K_R [n_U] } \prod_{j \in V  } n_j^{R(j)}\right)=\mathcal D_R[n]. 
    \end{align*}
    \end{proof}
    \begin{remark} 
        Notice that $ F $ is the free energy of the kinetic system with fluxes $(\Omega, \mathcal R, \mathcal K, n_U ) $.
        We obtain that 
        \[
        \partial_t F =- \mathcal D_R + J^{ext}. 
        \]
       Here $  \mathcal D_R $ is the dissipation of free energy and $J^{ext} (n) $ is the external source of free energy due to the in and out fluxes. 
        Notice that $J^{ext}=0$ only if $\pi_U n = e^{- E} $. Hence the source of free energy due to external fluxes is equal to zero only if the concentration of substances in the set $U$ are chosen at equilibrium values. 
        Moreover, notice that $\mathcal D_R[n] =- \partial_t F_V $ where $F_V $ is the free energy of the reduced kinetic system $(\Omega, \mathcal R , \mathcal K [n_U]) $, i.e. 
        \[
        F_V [\pi_V n ] := \sum_{i \in V} n_i \left(  \log (n_i e^{E_V(i) } )-1 \right), 
        \]
        where $E_V $ is an energy of the reduced kinetic system $(\Omega, \mathcal R , \mathcal K [n_U]) $. 
        Notice that if $(\Omega, \mathcal R , \mathcal K [n_U]) $ satisfies the detailed balance property, then $\mathcal D_R[n] \leq 0 $, where $n \in \mathbb R_+^N $.
        While if $(\Omega, \mathcal R , \mathcal K [n_U]) $ does not satisfy the detailed balance property, then there exists a constant $c>0 $ such that $\mathcal D_R[n] \leq -  c$ for  $n \in \mathbb R_+^N $.
        In this sense we can say that considering the completion $(\Omega, \mathcal K , \mathcal R) $ allows to measure the lack of equilibrium of the reduced kinetic system $(V, \mathcal R_V, \mathcal K [n_U])$. 
    \end{remark}
    
    \textbf{Acknowledgements} The authors gratefully acknowledge the support by the Deutsche Forschungsgemeinschaft (DFG) through the collaborative research centre "The mathematics of emerging effects" (CRC 1060, Project-ID 211504053) and Germany's Excellence StrategyEXC2047/1-390685813.
The funders had no role in study design, analysis, decision to publish, or preparation of the manuscript.

    \bibliographystyle{siam}

\bibliography{References}
\end{document}